\documentclass[11pt,twoside]{article}
\usepackage{etex}
\usepackage[dvipsnames]{xcolor} % 颜色宏包调用
\usepackage{color}

\usepackage{booktabs}
\usepackage{amsthm}

\usepackage{amsfonts,amssymb,amsmath,amsxtra,url,float} % 论文初始格式设定 ams 宏包
\allowdisplaybreaks[4] % 允许多行公式强制换页
\usepackage[colorlinks,
  linkcolor=magenta, %
  anchorcolor=Periwinkle,
  citecolor=violet,
  urlcolor=blue
  ]{hyperref} % 超链颜色设定
\usepackage{enumitem}
\usepackage{geometry} 
\usepackage{rotating} % 允许rotating环境旋转段落
\usepackage{lscape} % 允许使用landscape环境进行局部旋转
\usepackage{yhmath} % 让括号变得更圆润, 但对某些版本的tex会使得大括号发生断裂
\usepackage{multirow}
\usepackage{graphicx} % 图片宏包调用, 调用此宏包后将被允许插入png 格式图片
\usepackage{subfigure} % figure环境下的补充宏包, 允许在figure环境下对自图片进行分栏
\usepackage{tikz}
\usepackage{pgfplots}
\usepackage{tikz-3dplot}
\usetikzlibrary{patterns}
\usetikzlibrary{3d,calc}
\usetikzlibrary{decorations.pathreplacing,decorations.markings}
 \tikzset{
  on each segment/.style={
    decorate,
    decoration={
      show path construction,
      moveto code={},
      lineto code={
        \path [#1]
        (\tikzinputsegmentfirst) -- (\tikzinputsegmentlast);
      },
      curveto code={
        \path [#1] (\tikzinputsegmentfirst)
        .. controls
        (\tikzinputsegmentsupporta) and (\tikzinputsegmentsupportb)
        ..
        (\tikzinputsegmentlast);
      },
      closepath code={
        \path [#1]
        (\tikzinputsegmentfirst) -- (\tikzinputsegmentlast);
      },
    },
  },
  mid arrow/.style={postaction={decorate,decoration={
        markings,
        mark=at position 0.6 with {\arrow[#1]{stealth}} % 可根据具体情况调整数字0.6
      }}},
}
\usetikzlibrary{arrows}
\usetikzlibrary{trees}

\usetikzlibrary{matrix}
\usetikzlibrary{patterns}
\usetikzlibrary{shadings} % 调用tikz作图指令包
\usepackage{fancyhdr} % 设置页眉、页脚 % see \title[]{...}
\def\headertitle{Endomorphism algebras of Gorenstein-projective modules over string algebras}
\def\fstpage{1} % 起始页码设置
\def\page{$\begin{matrix} {\color{white}0} \\ \thepage \end{matrix}$} % 页脚页码代码转换

\usepackage[all]{xy} % 交换图箭图宏包调用, 允许在xymatrix环境下进行交换图表编译
\usepackage{dsfont} % 可以调用mathds 指令
\usepackage{cite}
\usepackage{mathrsfs} % mathscr
\numberwithin{figure}{section}
\usepackage{marginnote} % 可使用边注\marginnote{}指令
\usepackage{graphicx} % 可使用\figure 环境
\usepackage{multicol} %用于实现在同一页中实现不同的分栏

\usepackage{enumitem}
\setenumerate[1]{itemsep=0pt,partopsep=0pt,parsep=\parskip,topsep=3pt}
\setitemize[1]{itemsep=0pt,partopsep=0pt,parsep=\parskip,topsep=3pt}
\setdescription{itemsep=0pt,partopsep=0pt,parsep=\parskip,topsep=3pt}
\setlist[itemize]{leftmargin=35pt}
\setlist[enumerate]{leftmargin=35pt}
\usepackage{changepage} % changepage宏包可以支持段落的双侧缩进
\newtheorem{theorem}{Theorem}[section]
\newtheorem{lemma}[theorem]{Lemma}
\newtheorem{corollary}[theorem]{Corollary}
\newtheorem{main theorem}[theorem]{Main Theorem}
\newtheorem{proposition}[theorem]{Proposition}
\newtheorem{definition}[theorem]{Definition}

\newtheorem{remark}[theorem]{Remark}
\newtheorem{example}[theorem]{Example}
\newtheorem{notation}[theorem]{Notation}

\usetikzlibrary{arrows}

\numberwithin{equation}{section}

\def\orcid{
\begin{tikzpicture}[baseline=-1mm]
\filldraw[Green!35] (0,0) circle (5pt);
\filldraw[white] (0,0) node{\tiny\textbf{iD}};
\end{tikzpicture}
}
\def\orcid{
\begin{tikzpicture}[baseline=-1mm]
\filldraw[Green!35] (0,0) circle (5pt);
\filldraw[white] (0,0) node{\tiny\textbf{iD}};
\end{tikzpicture}
}
\newcommand{\ORCID}[1]{ORCID: \href{https://orcid.org/#1}{#1}}
\newcommand{\ORCIDNOTATION}[1]{\href{https://orcid.org/#1}{\orcid}}
\def\EnglishTitle{Endomorphism algebras of Gorenstein-projective modules over string algebras}

\def\EnglishFundings{
Yu-Zhe Liu is supported by
the National Natural Science Foundation of China (Grant Nos. 12401042 and 12561008),
the Science and Technology Foundation of the Guizhou S\&T Department (Grant Nos. VZD[2026]001, ZD[2025]085 and ZK[2024]YiBan066),
and Scientific Research Foundation of Guizhou University (Grant No. [2023]16).
}
\def\FirstAuthorORICD{% 填写ORCID号
0009-0005-1110-386X}
\def\SecondAuthorORICD{% 填写ORCID号
0009-0005-2941-3436}

\def\PaperAuthorsENname{
Yu-Zhe Liu
$^{\ref{Author1}, \ref{Author3}, \ORCIDNOTATION{\SecondAuthorORICD}\ref{orcid1}}$,
Zheng Xin
$^{\ref{Author2}, \ORCIDNOTATION{\SecondAuthorORICD}\ref{orcid2},~\ref{CorrespondingAuthor}}$
}
\def\FirstEnOrgani{School of Mathematics and Statistics, Guizhou University, Guiyang 550025, Guizhou, China}
\def\SecondEnOrgani{School of Mathematics and Statistics, Shaanxi Normal University, Xi'an 710062, China}
\def\ThirdEnOrgani{Guizhou Provincial Key Laboratory of Applied Mathematics and Computing Power \& Algorithms, Guiyang, 550025, Guizhou, China}

\def\FirstEmail{\url{yzliu3@163.com}/\url{liuyz@gzu.edu.cn} (Y.-Z. Liu)}
\def\SecondEmail{\url{xinzheng1314@yeah.net} (Z. Xin)}

\newcommand{\add}{\mathsf{add}}

\def\compos{\ \lower-0.2ex\hbox{\tikz\draw (0pt, 0pt) circle (.1em);} \ }

\newcommand{\Hom}{\mathrm{Hom}} %
\newcommand{\End}{\mathrm{End}} %
\newcommand{\Ext}{\mathrm{Ext}} %
\newcommand{\rad}{\mathrm{rad}}

\newcommand{\w}[1]{\widetilde{#1}}

\newcommand{\defines}[1]{{\it\color{violet}#1}}

\title{\bf \EnglishTitle$^{\color{red}\dag}$
\footnotetext[2]{ \tiny \EnglishFundings}
}

\vspace{5mm}

\author{\PaperAuthorsENname}
\date{ }

\begin{document}

% =========================================================
% =========================================================
%    文章首页设置 2 （不必在此处填写或修改任何内容）

%\marginpar{\tiny\listofchanges}

\thispagestyle{empty}

\maketitle

\begin{enumerate}[label=\textbf{\color{red}$\ddag$}]
  \item \footnotesize
    \begin{center}
      Corresponding author
    \end{center} \label{CorrespondingAuthor}
\end{enumerate}

%\begin{multicols}{1}
%\begin{enumerate}[leftmargin=2.4cm] \footnotesize
%  \item[\orcid]
%      \ORCID{\FirstAuthorORICD}
%      \label{orcid1} % author1's orcid
%  \item[\orcid]
%      \ORCID{\SecondAuthorORICD}
%      \label{orcid2} % author2's orcid
%  \item[\orcid]
%      \ORCID{\ThirdAuthorORICD}
%      \label{orcid3} % author3's orcid
%  \item[\orcid]
%      \ORCID{\FourthAuthorORICD}
%      \label{orcid4} % author4's orcid
%\end{enumerate}
%\end{multicols}

\begin{enumerate}[leftmargin=6.6cm] \footnotesize
  \item[\orcid]
      \ORCID{\FirstAuthorORICD}
      \label{orcid1} % author1's orcid
  \item[\orcid]
      \ORCID{\SecondAuthorORICD}
      \label{orcid2} % author2's orcid
%  \item[\orcid]
%      \ORCID{\ThirdAuthorORICD}
%      \label{orcid3} % author3's orcid
\end{enumerate}

\vspace{2mm}
\begin{enumerate}[label=\textbf{\color{red}\arabic*}] \footnotesize
  \item
    \begin{center}
      \FirstEnOrgani
    \end{center}
    \label{Author1}

  \item
    \begin{center}
      \SecondEnOrgani
    \end{center}
    \label{Author2}

  \item
    \begin{center}
      \ThirdEnOrgani
    \end{center}
    \label{Author3}
%
%
%  \item
%    \begin{center}
%      \FourthEnOrgani
%
%       E-mail: \FourthEmail
%    \end{center}
%    \label{Author4}
  \item[]
    \begin{center}
      E-mail: \FirstEmail, \SecondEmail
    \end{center}
\end{enumerate}
%    文章首页设置 2 结束
% =========================================================
% =========================================================

%\definecolor{section}{rgb}{0,0,0.5}

%=========================================================
% 正文
%=========================================================

%\marginpar{\tiny\listofchanges}

\vspace{1mm}

% =========================================================
% 英文摘要
% =========================================================

\begin{adjustwidth}{1cm}{1cm}
  \noindent \footnotesize
  \textbf{Abstract}:
Let $A$ be a string algebra. We identify its Gorenstein-projective support $\tau$-tilting pairs with a distinguished class of maximal good collections and give a finite compatibility graph for them.
Under the G-condition, we construct an explicit labelled tiled-surface model for $\mathrm{End}_A(T)$ for every nonzero basic Gorenstein-projective module $T$. Under the same condition, we show that $A$ is representation-finite if and only if $\End_A(T)$ is representation-finite for every basic Gorenstein-projective module $T$.
\vspace{1mm}

\noindent
\textbf{2020 Mathematics Subject Classification}:
16G20, 16D90, 16G10, 16G60

%16G10. % Representations of associative Artinian rings
%16G20; % Representations of quivers and partially ordered sets
%16E05; % Syzygies, resolutions, complexes in associative algebras
%16E10 % Homological dimension in associative algebras
%%46B99; % In 46Bxx (normed linear spaces and Banach spaces; Banach lattices): none of the above, but in this section
%46H25; % Normed modules and Banach modules, topological modules
%46M40. % Inductive and projective limits in functional analysis
%16E30, % Homological functors on modules (Tor, Ext, etc.) in associative algebras
%16S70. % Representations of quivers and partially ordered sets
%18G05. % Projectives and injectives (category-theoretic aspects)
     \label{2020MSC}

\vspace{1mm}

  \noindent
    \textbf{Keywords}:
    string algebras; Gorenstein-projective modules; support $\tau$-tilting theory; endomorphism algebras; labelled tiled surfaces.
     \label{Keywords}
\end{adjustwidth}

\newpage
% 目录
\tableofcontents

%=========================================================

\newcommand{\spacing}[1]{%
  \renewcommand{\baselinestretch}{#1}%
  \normalsize%
}

% Compatibility definitions used only by the inserted manuscript body.
\newtheorem{theoremA}{Theorem}
\renewcommand{\thetheoremA}{\Alph{theoremA}}
\newtheorem{problem}[theorem]{Problem}
\newcommand{\Fac}{\mathsf{Fac}}
\newcommand{\Sub}{\mathsf{Sub}}
\newcommand{\bpoint}{\circ}
\newcommand{\rpoint}{{\color{red}\bullet}}
\def\perpath{\mathsf{p.path}}
\definecolor{dark-green}{RGB}{14,130,2}

\section{Introduction}

String algebras form a class of special biserial algebras whose module categories admit an explicit combinatorial description.
Indecomposable modules are parametrized by strings and bands, Auslander--Reiten sequences are described by hooks and cohooks, and morphisms between string modules can be computed from overlaps of strings \cite{BR87,CB89}.
These features make string algebras a natural setting for studying homological and mutation-theoretic questions by concrete combinatorial methods.

Gentle algebras are an important subclass of string algebras.
Their Gorenstein-projective modules have a particularly simple description \cite{K15}, and their module categories admit geometric models in terms of marked surfaces and permissible curves \cite{BC21,BC21-corr}.
Related non-kissing constructions for locally gentle algebras provide a geometric framework for compatibility and mutation \cite{PPP19}; further surface models also appear in the study of algebraic hearts and related structures \cite{C26}.
These geometric descriptions are especially effective because quadratic relations can be read directly from the local configuration of the dissection.

For an arbitrary string algebra, the defining ideal may contain minimal relations of arbitrary length.
Baur and Coelho Sim\~oes overcome this problem by introducing labelled tiled surfaces, where labels record the additional monomial relations which are not visible in the underlying locally gentle tiling \cite{BC24}.
In this model, strings and bands are represented by permissible arcs and closed curves, morphisms are described by crossings and intersections, and support $\tau$-tilting pairs correspond to maximal collections satisfying the good-crossing condition.
Thus the surface model extends the geometric methods available for gentle algebras to general string algebras while retaining the information carried by long relations.

Support $\tau$-tilting theory was introduced by Adachi--Iyama--Reiten \cite{AIR14}, and its Gorenstein-projective version was studied in \cite{XZ24}.
On the other hand, Chen, Shen and Zhou classify the non-projective indecomposable Gorenstein-projective modules over a monomial algebra by perfect paths \cite{CSZ18}.
For string algebras this description was developed further by Liu and Zhang, who also construct the Cohen--Macaulay Auslander algebra \cite{LZ24}.
In the gentle case, Gorenstein-projective support $\tau$-tilting modules and their endomorphism algebras have already been studied geometrically \cite{LZH}.
For general string algebras, however, the perfect-path description of Gorenstein-projective modules and the labelled-surface description of strings and support $\tau$-tilting pairs arise from different combinatorial data.

The first purpose of this paper is to relate perfect paths to the labelled surface model.
For each perfect path \(p\), we identify \(pA\) with the string module \(M(w_p)\).
Together with the indecomposable projective modules, this realizes all indecomposable Gorenstein-projective modules as a distinguished family of permissible arcs.
The correspondence of \cite[Theorem~5.13]{BC24} then yields a geometric classification of Gorenstein-projective support \(\tau\)-tilting pairs.

These pairs also admit a finite description.
Let \(n=|Q_0|\), and form the graph \(\mathcal G_A\) whose module vertices are the indecomposable \(\tau\)-rigid Gorenstein-projective modules, together with vertices \(i^{-}\), \(i\in Q_0\).
Distinct module vertices \(X,Y\) are adjacent when \(\Hom_A(X,\tau Y)=\Hom_A(Y,\tau X)=0\); the vertex \(i^{-}\) is adjacent to \(X\) when \(Xe_i=0\), and the negative vertices are pairwise adjacent.

\begin{theoremA}[Theorem~\ref{thm:finite-compatibility-graph}]
\label{thm:intro-GP-classification}
The \(n\)-vertex cliques of \(\mathcal G_A\) correspond bijectively to the basic Gorenstein-projective support \(\tau\)-tilting pairs.
\end{theoremA}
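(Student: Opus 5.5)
The plan is to reduce Theorem~\ref{thm:intro-GP-classification} to the general support $\tau$-tilting theory of Adachi--Iyama--Reiten \cite{AIR14}. The key fact to use is that a pair $(M,P)$, with $M$ a $\tau$-rigid module and $P$ projective, is support $\tau$-tilting exactly when three conditions hold: $M$ is $\tau$-rigid, $\Hom_A(P,M)=0$, and $|M|+|P|=n$. Every basic pair is determined by its indecomposable summands. Write $M=\bigoplus_j X_j$ and $P=\bigoplus_{i\in I}e_iA$. To such a pair we assign the vertex set $\{X_j\}\cup\{i^-: i\in I\}$ in $\mathcal G_A$. By definition, a Gorenstein-projective pair is one whose module part $M$ is Gorenstein-projective. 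The projective part is automatically Gorenstein-projective, so it imposes no further condition.

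First we check that this vertex set is a clique with $n$ vertices. The summands $X_j$ are indecomposable, $\tau$-rigid and Gorenstein-projective, because $\add$ of a Gorenstein-projective module is closed under summands. Two distinct summands $X,Y$ satisfy $\Hom_A(X,\tau Y)=\Hom_A(Y,\tau X)=0$, since $M$ is $\tau$-rigid. The condition $\Hom_A(e_iA,X)=Xe_i=0$ is exactly adjacency of $i^-$ with $X$. Negative vertices are pairwise adjacent by construction. Since the pair is basic, the vertices are distinct, and there are $|M|+|P|=n$ of them.

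Conversely, let $C$ be an $n$-clique. Set $M$ to be the direct sum of the module vertices in $C$ and $P=\bigoplus_{i^-\in C}e_iA$. Then $M$ is basic and Gorenstein-projective. We must show $M$ is $\tau$-rigid. Off-diagonal terms vanish by adjacency. The diagonal terms $\Hom_A(X,\tau X)$ vanish because each vertex is itself $\tau$-rigid. Hence $\Hom_A(M,\tau M)=0$. Adjacency also gives $\Hom_A(P,M)=0$, and the total number of summands is $n$. So $(M,P)$ is a support $\tau$-tilting pair by \cite[Proposition~2.3]{AIR14}, which says that a $\tau$-rigid pair with $n$ summands is support $\tau$-tilting. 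The two assignments are visibly inverse to each other, which gives the bijection.

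The main point needing care is the finiteness of $\mathcal G_A$ and the meaning of ``$n$-vertex clique''. A clique could a priori be larger than $n$, so I would also note that any clique has at most $n$ vertices. This follows because every clique yields a $\tau$-rigid pair, and $\tau$-rigid pairs have at most $n$ summands \cite{AIR14}.

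For finiteness, I would use the perfect-path classification \cite{CSZ18,LZ24}. The non-projective indecomposable Gorenstein-projectives are the modules $pA$ for perfect paths $p$, and there are finitely many of these. Together with the $n$ indecomposable projectives, this leaves finitely many module vertices. The finiteness argument is the only step relying on string-algebra specifics; the rest is formal.
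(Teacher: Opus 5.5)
Your argument is correct and follows essentially the same route as the paper. Adjacency among module vertices, together with $\tau$-rigidity of each vertex, is equivalent to $\tau$-rigidity of $M_{\mathcal C}$; adjacency with negative vertices is $\Hom_A(P_{\mathcal C},M_{\mathcal C})=0$ via $\Hom_A(e_iA,X)\cong Xe_i$; and $|M_{\mathcal C}|+|P_{\mathcal C}|=|\mathcal C|$ turns the count $n$ into the support $\tau$-tilting condition. Three small adjustments: with $\mathcal M_A$ defined as the $\tau$-rigid modules among the $e_iA$ and $pA$, the perfect-path classification is what identifies the summands of $M$ with vertices, not just what gives finiteness; the last step is simply the definition of a support $\tau$-tilting pair, so no cited proposition is needed; and the bound $|\mathcal C|\leqslant n$ is not needed.
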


The graph \(\mathcal G_A\) is determined by finite path calculations.
Indeed, if \((p,q)\) is a perfect pair with \(p\) perfect, then, for every \(A\)-module \(N\),
\[
\Hom_A(N,\tau(pA))=0 \quad\Longleftrightarrow\quad Ne_{t(p)}\xrightarrow{\,\cdot q\,}Ne_{t(q)} \text{ is surjective}.
\]
No Gorenstein-projective Bongartz completion is required.
On the labelled surface, the \(n\)-vertex cliques correspond to maximal good collections whose module arcs represent Gorenstein-projective modules, while the arcs in \(\mathcal P\) encode the projective part.
Notice that \(pA\) is represented by the string \(w_p\), which need not coincide with \(p\).

The second purpose is to construct the endomorphism algebras of Gorenstein-projective modules.
If \(G_A\) is a basic additive generator of \(\mathsf{Gproj}(A)\) and \(\Gamma=\End_A(G_A)=A^{\mathrm{CMA}}\), then \(\End_A(T)\cong e_T\Gamma e_T\).
Under the G-condition, \(\Gamma\) is a string algebra \cite{LZ24}.
The corner description is valid without the G-condition, whereas the string-algebra structure and the labelled-surface realization of \(\End_A(T)\) below require the G-condition.
{As} a corner is not obtained merely by taking a full subquiver, we compress paths through omitted vertices and separate the resulting relations into tile and label relations.

\begin{theoremA}[Theorem~\ref{thm:geometric-End}]
\label{thm:intro-construction}
Let \(A\) satisfy the G-condition and let \(T\) be a nonzero basic Gorenstein-projective \(A\)-module.
Compressing the paths in the perfect-arrow presentation of \(A^{\mathrm{CMA}}\) through the vertices not corresponding to summands of \(T\) {provides} a string algebra \(\mathbf{k}Q_T/J_T\) and a labelled tiled surface \((\mathcal S_T,\mathsf P_T,\mathsf L_T)\) such that
\[
\End_A(T)\cong\mathbf{k}Q_T/J_T \cong A_{\mathsf P_T,\mathsf L_T}.
\]
A path in \(Q_T\) is zero precisely when its expansion is zero in \(A^{\mathrm{CMA}}\).
\end{theoremA}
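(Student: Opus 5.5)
The argument has two parts: first identify $\End_A(T)$ with an idempotent corner of $\Gamma=A^{\mathrm{CMA}}$ and present that corner by a compressed quiver with relations, then read off the labelled tiled surface from this presentation.

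\textbf{Step 1: the corner.} Write $T=\bigoplus_{i\in I}G_i$, where the $G_i$ are pairwise non-isomorphic indecomposable summands of $G_A$, and let $e_T=\sum_{i\in I}e_i\in\Gamma$. The standard identification gives $\End_A(T)\cong e_T\Gamma e_T$. Under the G-condition, \cite{LZ24} presents $\Gamma$ as a string algebra $\mathbf{k}Q/J$, where $Q$ is the perfect-arrow quiver and $J$ is generated by paths, possibly together with the relations coming from the G-condition.

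\textbf{Step 2: the compressed quiver $Q_T$.} Its vertices are $I$. Its arrows are the paths $u=\alpha_1\cdots\alpha_m$ in $Q$ that
\begin{itemize}
\item start and end in $I$,
\item have all interior vertices outside $I$,
\item are nonzero in $\Gamma$.
\end{itemize}
Let $\pi:\mathbf{k}Q_T\to e_T\Gamma e_T$ send each such arrow to its expansion.

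\begin{itemize}
\item \emph{Surjectivity of $\pi$.} Any nonzero path of $\Gamma$ between vertices of $I$ factors uniquely as a concatenation of such segments, by cutting at each visit to $I$.
\item \emph{Finiteness of $Q_T$.} Since $\Gamma$ is finite dimensional, only finitely many segments are nonzero.
\item \emph{String-algebra conditions.} Each vertex of $Q$ has at most two outgoing and two incoming arrows. A path leaving $i\in I$ through a given arrow continues uniquely along the string-algebra path, up to the choice made at each vertex. I would show that the biserial branching at a vertex outside $I$ is "absorbed": of the two continuations through a vertex $v\notin I$ after an arrow $\beta$, at most one is nonzero (string condition). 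Hence each first arrow out of $i$ determines at most one compressed arrow ending at each return to $I$.
\end{itemize}
This last point needs care. A path can return to $I$ at several points, and only the first return gives an arrow, so each arrow of $Q$ leaving $i$ yields at most one arrow of $Q_T$. This gives the bound of two outgoing arrows at each vertex of $Q_T$; incoming arrows are bounded dually.

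\textbf{Step 3: the kernel and zero paths.} Let $J_T=\ker\pi$. Since $\Gamma$ is monomial on paths after the identification of \cite{LZ24}, distinct nonzero paths of $\Gamma$ are linearly independent. The factorization into segments is unique, so distinct paths of $Q_T$ expand to distinct paths of $Q$. Hence $J_T$ is spanned by the paths whose expansion is zero, which is exactly the last assertion of the theorem. The string condition for $(Q_T,J_T)$ then follows from that of $(Q,J)$: if $\alpha\beta$ and $\alpha\gamma$ are both nonzero in $Q_T$, their expansions would give two nonzero continuations in $Q$ after the last arrow of $\alpha$.

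\textbf{Step 4: the surface.} I split the minimal zero paths of $Q_T$ into two classes:
\begin{itemize}
\item length-two relations whose expansion has the form $u\alpha\,\beta v$ with $\alpha\beta\in J$ quadratic; these are the tile relations;
\item all remaining relations, which are the label relations $\mathsf L_T$.
\end{itemize}
Apply the Baur--Coelho Sim\~oes construction \cite{BC24} to the locally gentle algebra $\mathbf{k}Q_T/\langle\text{tile relations}\rangle$ to get $(\mathcal S_T,\mathsf P_T)$. The labels $\mathsf L_T$ are then attached to the corresponding consecutive angles. To conclude $A_{\mathsf P_T,\mathsf L_T}\cong\mathbf{k}Q_T/J_T$, one checks that the label paths are compatible in the sense of \cite{BC24}: they are not already zero by the tile relations, and each is minimal.

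\textbf{Main obstacle.} I expect the hardest step to be Step 2/3: proving that $Q_T$ has at most two arrows in and out, and that every relation between compressed arrows is monomial. This uses the G-condition in an essential way. Without it, a vertex outside $I$ could allow two distinct nonzero compressed arrows sharing their first arrow, which would break the biserial and string conditions. I would first try to extract from \cite{LZ24} a local description of the perfect arrows at a vertex and use it to rule out this case.
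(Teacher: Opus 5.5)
Your Steps 1--3 follow the paper, namely Lemma~\ref{prop:End-corner} and the $S$-segment compression of Proposition~\ref{prop:corner-string-compression}, and they are sound. Compression preserves the string conditions for a corner of any string algebra. So the G-condition enters only by making $A^{\mathrm{CMA}}$ a monomial string algebra with the perfect-arrow presentation; it is not what keeps $Q_T$ biserial, contrary to your ``main obstacle''.

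The genuine gap is in Step~4. You declare a length-two relation $[u][v]$ a tile relation whenever the junction $\alpha\beta$ lies in $J$, where $\alpha$ is the last arrow of $u$ and $\beta$ the first arrow of $v$. A string algebra may, however, contain an arrow with two quadratic zero continuations. In that case $\mathbf{k}Q_T/\langle\text{tile relations}\rangle$ violates~\ref{G1}, so it is not locally gentle and the surface construction produces nothing. This already happens for $T=A$ in Remark~\ref{ex:nonperfect-full-cycle}. There $\perpath(A)=\varnothing$, so the G-condition holds, $Q_T=Q$ and $J_T=\langle ab,ac,ba\rangle$. Your rule makes all three relations tile relations, and $a$ then has the two zero continuations $ab$ and $ac$.

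The missing ingredient is a rule that keeps, at each arrow, at most one quadratic zero continuation as a tile relation and turns the others into labels with three incidences. The paper gets this rule from sign functions inherited from $\Gamma$:
\begin{enumerate}
\item Let $\sigma_T([u])$ be the $\sigma_\Gamma$-value of the first arrow of $u$, and $\varepsilon_T([u])$ the $\varepsilon_\Gamma$-value of its last arrow.
\item Check that these are sign functions for $(Q_T,J_T)$.
\item Take as tile relations exactly the composable pairs with $\sigma_T([v])=\varepsilon_T([u])$.
\end{enumerate}
Lemma~\ref{lem:compressed-minimal-relations} then shows two things. First, these pairs generate a locally gentle ideal $J_T^{\mathrm{lg}}$. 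Second, every other minimal relation has opposite signs at each junction; this covers relations of length at least three and quadratic ones with opposite endpoint signs, such as $ac$ above. Such a relation therefore runs along one fan and is nonzero modulo $J_T^{\mathrm{lg}}$.

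You must also check that every red puncture of $(\mathcal S_T,\mathsf P_T)$ carries a label, because the definition of a labelled tiling algebra requires it. The paper derives this from finite-dimensionality: some power of the cycle around a puncture lies in $J_T$ but not in $J_T^{\mathrm{lg}}$, so it contains a label relation at that puncture. Your proposed compatibility check (``not already zero by the tile relations, and each minimal'') addresses neither the local gentleness nor the puncture condition.
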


The compression construction applies to every corner of a string algebra.
For an arbitrary string algebra \(A\), graph-map bases also {provide} a finite bound-quiver presentation of \(\End_A(T)\), although its relations may be nonmonomial and may involve paths of different lengths.
Related endomorphism algebras for string almost gentle algebras are studied in \cite{LZhou24}.

Stable endomorphism algebras are considerably simpler.
For an arbitrary string algebra, the stable endomorphism algebra of a basic \(\tau\)-rigid module is gentle or zero \cite{SZ03}.
Under the G-condition, if a basic module \(T\in\mathsf{Gproj}(A)\) has \(r\) non-projective indecomposable summands, then \(\underline{\End}_A(T)\cong\mathbf{k}^{\,r}\).
By contrast, \(\End_A(T)\) may have long monomial relations under the G-condition and may have nonmonomial relations involving paths of different lengths outside it.

We finally consider representation-finiteness.
{As} a string algebra is representation-finite precisely when it has no bands \cite{BR87}, and perfect-arrow splitting preserves bands under the G-condition, we obtain the following result.

\begin{theoremA}[Theorem~\ref{thm:rep-finite-G-condition}]
\label{thm:intro-repfinite}
Let \(A\) satisfy the G-condition.
The following are equivalent:
\begin{enumerate}
  \item[{\rm(1)}] \(A\) is representation-finite;
  \item[{\rm(2)}] \(A^{\mathrm{CMA}}\) is representation-finite;
  \item[{\rm(3)}] \(\End_A(T)\) is representation-finite for every basic Gorenstein-projective module \(T\);
  \item[{\rm(4)}] \(\End_A(T)\) is representation-finite for every basic Gorenstein-projective support \(\tau\)-tilting module \(T\).
\end{enumerate}
\end{theoremA}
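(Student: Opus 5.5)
The plan is to prove the cycle of implications (1)$\Rightarrow$(2)$\Rightarrow$(3)$\Rightarrow$(4)$\Rightarrow$(1). Throughout, a string algebra is representation-finite if and only if it has no bands \cite{BR87}. Under the G-condition, $A^{\mathrm{CMA}}=\End_A(G_A)$ is a string algebra \cite{LZ24}, and by Theorem~\ref{thm:intro-construction} each $\End_A(T)\cong e_T A^{\mathrm{CMA}} e_T$ is again a string algebra $\mathbf{k}Q_T/J_T$.

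For (1)$\Leftrightarrow$(2) I would use the perfect-arrow presentation of $A^{\mathrm{CMA}}$. Its quiver is obtained from $Q$ by splitting each perfect arrow, that is, by inserting the vertices corresponding to the modules $pA$. I would show that this splitting induces a bijection between bands of $A$ and bands of $A^{\mathrm{CMA}}$, up to equivalence, and that the bijection preserves and reflects the defining relations. Under it, a band of $A$ passing through a perfect arrow $\alpha$ corresponds to the band that traverses the split path in its place. The key points to verify are:
\begin{itemize}
  \item no new cyclic walk becomes admissible, because the new vertices have in- and out-degree forced by the perfect pairs;
  \item the zero relations of $A^{\mathrm{CMA}}$ are exactly the expansions of those of $A$.
\end{itemize}
This is the "perfect-arrow splitting preserves bands" statement announced in the introduction, and I expect it to be the main obstacle. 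What makes it delicate is the local analysis at a split vertex: one has to check that a band cannot turn at an inserted vertex in a way unavailable in $A$. I would handle this by a case analysis on whether the neighbouring arrows belong to the same perfect pair $(p,q)$, using the G-condition to exclude overlapping perfect paths.

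(2)$\Rightarrow$(3): $\End_A(T)$ is an idempotent corner of $A^{\mathrm{CMA}}$. Since $e\Gamma e$-modules embed fully faithfully into $\Gamma$-modules via $-\otimes_{e\Gamma e} e\Gamma$, and this functor preserves indecomposability and non-isomorphism, a corner of a representation-finite algebra is representation-finite. Alternatively, compression maps a band of $Q_T$ to a band of $A^{\mathrm{CMA}}$ by expanding each arrow, using the property from Theorem~\ref{thm:intro-construction} that a path is zero exactly when its expansion is zero.

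(3)$\Rightarrow$(4) is trivial. For (4)$\Rightarrow$(1), I would take $T=A$, which is a Gorenstein-projective support $\tau$-tilting module, so that $\End_A(A)\cong A$ is representation-finite.
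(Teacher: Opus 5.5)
Your proposal is correct and follows the paper's proof essentially step for step. The paper also proves (1)$\Leftrightarrow$(2) through the band bijection induced by perfect-arrow splitting (Lemma~\ref{lem:band-splitting-contraction}), where the key point is that $v_\alpha$ is incident only to $\alpha^-$ and $\alpha^+$. It likewise gets (2)$\Rightarrow$(3) from the fully faithful functor $-\otimes_{e\Gamma e}e\Gamma$ on corners, and (4)$\Rightarrow$(1) from $T=A_A$.

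One small correction: the zero relations of $A^{\mathrm{CMA}}$ are not all full expansions of relations of $A$, since a perfect-pair relation $\alpha\beta$ becomes $\alpha^+\beta^-$. This is harmless for bands, because every visit to $v_\alpha$ or $v_\beta$ runs through $\alpha^-\alpha^+$ or $\beta^-\beta^+$.
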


\section{Preliminaries}
Throughout the paper, $\mathbf{k}$ is an algebraically closed field, all algebras are finite-dimensional $\mathbf{k}$-algebras unless explicitly stated otherwise, and all modules are finitely generated right modules.
We write $\operatorname{mod}A$ for their category and $\operatorname{ind}(\mathcal{C})$ for the set of isomorphism classes of indecomposable objects of a Krull--Schmidt category $\mathcal{C}$.
A quiver $Q=(Q_0,Q_1,s,t)$ is always finite.
Here $Q_0$ is the vertex set, $Q_1$ is the arrow set, and $s,t:Q_1\to Q_0$ assign to an arrow its source and target.
A loop at $v$ is an arrow $a$ with $s(a)=t(a)=v$.

A path of length $n\geqslant1$ is a sequence $p=a_1\cdots a_n$ such that $t(a_i)=s(a_{i+1})$ for $1\leqslant i<n$.
It is a cycle if $t(a_n)=s(a_1)$.
The trivial path at $v$ is denoted by $e_v$.
We compose paths from left to right: if $t(a)=s(b)$, then $ab$ starts at $s(a)$ and ends at $t(b)$.
Composition of module homomorphisms is written in the usual right-to-left order.

Let $R_Q$ be the arrow ideal of the path algebra $\mathbf{k}Q$.
An ideal $I\subseteq\mathbf{k}Q$ is \defines{admissible} if $R_Q^m\subseteq I\subseteq R_Q^2$ for some $m\geqslant2$.
In this case $\mathbf{k}Q/I$ is finite-dimensional.

\subsection{String algebras, strings and bands}
Now we recall the definition of string algebras.

\begin{definition}[{\!\!\cite{BR87}}] \label{definition:string algebras} \rm
An algebra $A=\mathbf{k}Q/I$ is called a \defines{string algebra} if $I$ is an admissible ideal of $\mathbf{k}Q$ satisfying the following conditions:
\begin{enumerate}[label=(S\arabic*)]
  \item Each vertex in $Q_0$ is the source of at most two arrows and the target of at most two arrows.\label{S1}

  \item For each arrow $a$, there is at most one arrow $b$ with $t(a)=s(b)$ and $ab\notin I$, and at most one arrow $c$ with $t(c)=s(a)$ and $ca\notin I$.\label{S2}

  \item $I$ is generated by paths of length at least two, i.e. $A$ is monomial.
\label{S3}
\end{enumerate}
\end{definition}

\begin{definition}[{\!\!\cite{AS87}}] \rm
\label{definition:gentle algebras}
A string algebra $A=\mathbf{k}Q/I$ is called \defines{gentle} if it satisfies the following additional conditions:
\begin{enumerate}[label=(G\arabic*)]
  \item For each arrow $a$, there is at most one arrow $b'$ with $t(a)=s(b')$ and $ab'\in I$, and at most one arrow $c'$ with $t(c')=s(a)$ and $c'a\in I$.\label{G1}

  \item $I$ is generated by paths of length two.\label{G2}
\end{enumerate}
\end{definition}

A vertex $v$ satisfying \ref{S1} and for which the paths of length 2 going through $v$ satisfy \ref{S2} and \ref{G1} is called a \defines{gentle vertex}.
Otherwise, it is called a \defines{non-gentle vertex}.

\begin{definition}\label{definition:locally gentle algebras} \rm
A \defines{locally gentle algebra} is an algebra which admits a presentation $\mathbf{k}Q/I$ satisfying the conditions in Definition \ref{definition:gentle algebras} but which is not necessarily finite dimensional, i.e. there may not exist \( m \geqslant 2 \) such that \( R_Q^m \subseteq I\).
\end{definition}

\begin{example} \rm
\label{ex:string-gentle algebra}
Consider the quiver $Q$
        \[\begin{tikzpicture}[scale=0.73]
            \draw (2,0) node {$2$} (-1,1.73) node{$1$} (-1,-1.73) node{$3$};
            \foreach \x in {0,120,240}
            \draw[rotate=-10+\x][->] (-1,1.73) arc(120:20:2);
            \draw[->] (2.2,0) -- (4.2,0);
            \draw (4.2,0) node[right]{$4$};
            \draw (3.2,0) node[above]{$d$};
            \draw[rotate=60+  0] (2.3,0) node{$a$};
            \draw[rotate=60+240] (2.3,0) node{$b$};
            \draw[rotate=60+120] (2.3,0) node{$c$};
        \end{tikzpicture}\]
%\begin{center}
%\begin{tikzcd}
%1 \arrow[rr, "a"] &                     & 2 \arrow[ld, "b"] \arrow[r, "d"] & 4 \\
%& 3 \arrow[lu, "c"] &                                  &
%\end{tikzcd}
%\end{center}
and the following ideals:
\begin{center}
$I_{1}=\langle bca,ad,ab\rangle$, $I_{2}=\langle ad\rangle$, and $I_{3}=\langle ab\rangle$.
\end{center}
Then $\mathbf{k}Q/I_{1}$ is a string algebra which is neither gentle nor locally gentle.
The algebra $\mathbf{k}Q/I_{2}$ is locally gentle but not gentle, and $\mathbf{k}Q/I_{3}$ is gentle.

\end{example}

Let $A = \mathbf{k}Q/I$ be a string algebra.
Every indecomposable $A$-module is isomorphic to either a string module or a band module.
We briefly recall the relevant definitions and refer to \cite{BR87} for details.
For each arrow $a \in Q_1$, we define a formal inverse $a^{-1}$ with $s(a^{-1}) = t(a)$ and $t(a^{-1}) = s(a)$.
We also set $(a^{-1})^{-1} = a$.

A \defines{string} of length $n \ge 1$ is a sequence $w = l_1 l_2 \cdots l_n$, where each $l_i$ is an arrow or a formal inverse of an arrow, satisfying the following conditions:
\begin{enumerate}
\item[(1)] $t(l_i) = s(l_{i+1})$ for all $1 \le i \le n - 1$.
\item[(2)] $l_{i+1} \neq l_i^{-1}$ for all $1 \le i \le n - 1$.
\item[(3)] No subword of $w$ or its formal inverse $w^{-1} = l_n^{-1} \cdots l_1^{-1}$ belongs to the ideal $I$.
\end{enumerate}
For each vertex $v \in Q_0$, there is also a trivial string $e_v$ of length $0$.
In the signed convention used in the surface model, $e_v$ has two signs; they are inverse to one another and determine the same simple module and the same underlying unoriented arc.
The \defines{string module} $M(w)$ is obtained by assigning one copy of $\mathbf{k}$ to each occurrence of a vertex along $w$ and the identity map to each arrow occurring in the graph of $w$.
In particular, repeated occurrences of the same vertex contribute distinct basis vectors.
Moreover, $M(w)\cong M(w^{-1})$.

We shall also use the sign convention of \cite[Definition~1.5]{BC24}.
A pair of \defines{sign functions} for $(Q,I)$ consists of maps
\[
\sigma,\varepsilon:Q_1\longrightarrow\{-1,1\}
\]
such that distinct arrows with the same source have different \(\sigma\)-signs, distinct arrows with the same target have different \(\varepsilon\)-signs, and for any pair of arrows \(a,b\) satisfying \(t(a)=s(b)\) and \(ab\notin I\), one has \(\sigma(b)=-\varepsilon(a)\).
Such sign functions exist for every string algebra.
For a nontrivial direct path $u=a_1\cdots a_r$, {let} $\sigma(u)=\sigma(a_1)$ and $\varepsilon(u)=\varepsilon(a_r)$.

A \defines{band} is a cyclic string \(b=l_1\cdots l_n\) such that every power \(b^m\) is a string and \(b\) is not a proper power of a shorter string.
We identify bands which differ by a cyclic permutation or inversion.
A band \(b\), an integer \(d\geqslant1\), and an indecomposable automorphism \(\varphi\) of \(\mathbf{k}^d\) determine a \defines{band module} \(B(b,d,\varphi)\).

\begin{proposition}[\cite{BR87}]\label{prop:band-criterion}
A string algebra is representation-finite if and only if it admits no band.
\end{proposition}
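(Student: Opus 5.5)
The plan is to follow Butler--Ringel \cite{BR87}: compare the string modules, which are always finite in number on each dimension vector, with the band modules, which come in one-parameter families.

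\emph{Sufficiency (no band implies representation-finite).} Assume $A$ has no band. Every indecomposable module is then a string module $M(w)$, so it suffices to bound the lengths of strings. Since $I$ is admissible, $R_Q^m\subseteq I$, so every direct or inverse subword of a string has length less than $m$. Hence a string of large length contains many alternations between direct and inverse segments. By the finiteness of $Q$, such a string must repeat a configuration: a vertex together with the letter entering it and the letter leaving it.

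Let $w=u\,v\,u'$, where the repeated configuration occurs at the start and at the end of $v$. I will argue that $v$, or $v^2$ if needed to fix orientation, can be closed up to a cyclic word all of whose powers are strings. This relies on \ref{S2}, which forces the continuation of any string at a vertex with given incoming letter to be essentially unique in each direction. It also relies on \ref{S3}: relations are monomial of bounded length, so the condition ``no subword in $I$'' is local and is checked on windows of length at most $m$, which are already present in $w$.

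The cyclic word obtained this way has some power that is a band. This contradicts the assumption, so string lengths are bounded. There are then finitely many strings, and $A$ is representation-finite.

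\emph{Necessity (a band implies representation-infinite).} If $b$ is a band, the band modules $B(b,1,\lambda)$ for $\lambda\in\mathbf{k}^{*}$ are pairwise non-isomorphic indecomposables. Since $\mathbf{k}$ is algebraically closed, and hence infinite, $A$ is representation-infinite.

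\emph{Main obstacle.} The hard step is the pumping argument in the sufficiency part. One must ensure that concatenating copies of $v$ never creates a forbidden subword $l_{i+1}=l_i^{-1}$ at the junction. One must also ensure that a relation straddling the junction is avoided; here the monomiality and bounded length of relations are used. The plan is to take the repeated configuration to include a window of $m$ consecutive letters rather than a single vertex. Then every length-$\le m$ subword of $v^k$ already occurs in $w$, so it is not in $I$.

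Finally, I take the primitive root of the resulting cyclic string. This gives a word that is not a proper power, and so a band.
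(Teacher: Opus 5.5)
Your proposal is correct and follows the standard Butler--Ringel argument, which the paper cites without reproducing: the string/band classification, the pairwise non-isomorphic band modules $B(b,1,\lambda)$, $\lambda\in\mathbf{k}^{*}$, for necessity, and a pigeonhole bound on string lengths for sufficiency. The pumping step that works is the one in your last two paragraphs: an $m$-letter window recurring at two positions of $w$ forces every window of length at most $m$ of $v^{n}$ to occur already in $w$ (a repeated vertex with its two adjacent letters is enough when it is a change of orientation, but not in general, and \ref{S2} is not needed here), and then $v$ is a power of the band given by its primitive root, not a word having a band as a power.
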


\subsection{Perfect paths and Gorenstein-projectives}

An $A$-module $G$ is \defines{Gorenstein-projective} if it occurs as a syzygy in a totally acyclic complex of finitely generated projective modules; equivalently, there is an exact complex
\[
\cdots\longrightarrow P^{-1}\longrightarrow P^0\longrightarrow P^1 \longrightarrow P^2\longrightarrow\cdots
\]
which remains exact after applying $\Hom_A(-,A)$ and such that $G\cong\operatorname{Im}(P^{-1}\to P^0)$.
We write $\mathsf{Gproj}(A)$ for the full subcategory of $\operatorname{mod}A$ consisting of the Gorenstein-projective modules.
The algebra $A$ is called \defines{CM-finite} if $\mathsf{Gproj}(A)$ has only finitely many isomorphism classes of indecomposable objects, and it is called \defines{CM-free} if every Gorenstein-projective $A$-module is projective.

\begin{definition}[{\!\!\cite[Definitions~3.3 and~3.7]{CSZ18}}]\label{def:perfect-path} \rm
Let $p,q$ be nontrivial paths with $p,q\notin I$.
The pair $(p,q)$ is a \defines{perfect pair} if:
\begin{enumerate}[label=(\arabic*), label={(P\arabic*)}]
\item $t(p)=s(q)$ and $pq\in I$; \label{P1} \item whenever $q'$ is a nontrivial path with $q'\notin I$, $s(q')=t(p)$ and $pq'\in I$, the path $q$ is a prefix of $q'$; \label{P2} \item whenever $p'$ is a nontrivial path with $p'\notin I$, $t(p')=s(q)$ and $p'q\in I$, the path $p$ is a suffix of $p'$.
\label{P3}
\end{enumerate}

A cyclic sequence $(p_1,\ldots,p_m)$ is a \defines{perfect path sequence} if $(p_i,p_{i+1})$ is a perfect pair for every $i$, with indices read modulo $m$.
A path occurring in such a sequence is a \defines{perfect path}.
We denote the set of perfect paths by $\perpath(A)$.
\end{definition}

We recall the following characterization of non-projective indecomposable Gorenstein-projective modules from \cite[Theorem~4.1]{CSZ18}; see also \cite[Theorem~3.7]{LZ24}.

\begin{theorem}\label{thm:perfect-path-GP}
Let $A=\mathbf{k}Q/I$ be a string algebra.
The assignment $p\mapsto pA$ induces a bijection from $\perpath(A)$ to the isomorphism classes of non-projective indecomposable Gorenstein-projective $A$-modules.
Consequently,
\[
\operatorname{ind}(\mathsf{Gproj}(A)) =\{e_iA\mid i\in Q_0\}\sqcup\{pA\mid p\in\perpath(A)\}.
\]
\end{theorem}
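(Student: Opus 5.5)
This theorem is quoted from \cite[Theorem~4.1]{CSZ18}, so the plan is to specialise the monomial-algebra argument to the string case rather than build a new proof. By \ref{S3}, $A$ is monomial, so the classification of Chen--Shen--Zhou applies verbatim; what follows is a sketch of that argument.

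\textbf{Step 1: cyclic syzygies of right ideals.} First we describe $pA$ for a nonzero path $p$. For a monomial algebra, $pA$ has a basis consisting of the nonzero paths $pu$. Its projective cover is $e_{t(p)}A\to pA$, $u\mapsto pu$. The kernel of this map is spanned by the nonzero paths $u$ starting at $t(p)$ with $pu\in I$. Hence the kernel is a direct sum $\bigoplus_j q_jA$, where the $q_j$ are the minimal such paths, i.e.\ the right annihilator paths of $p$. Consequently $\Omega(pA)\cong\bigoplus_j q_jA$, and every syzygy of a monomial right ideal is again a sum of modules $qA$.

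\textbf{Step 2: a perfect pair gives a periodic, totally acyclic resolution.} Let $(p,q)$ be a perfect pair. By \ref{P2}, the annihilator of $p$ is generated by the single path $q$, so $\Omega(pA)\cong qA$. Dually, \ref{P3} says that the left annihilator of $q$ inside $Ae_{s(q)}$ is generated by $p$. For a perfect path sequence $(p_1,\dots,p_m)$ we therefore obtain an exact periodic complex
\[
\cdots\to e_{t(p_{i})}A\xrightarrow{p_{i+1}\cdot}e_{t(p_{i-1})}A\to\cdots
\]
whose images are the modules $p_iA$. Applying $\Hom_A(-,A)$ turns this into the analogous complex of left modules $Ae_v$ with right multiplication. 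That complex is exact exactly because of the left-annihilator statement. Hence each $p_iA$ is Gorenstein-projective and non-projective: it is not projective since $pq=0$ with $q\neq0$, so $p\cdot$ is not injective on $e_{t(p)}A$.

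\textbf{Step 3: every non-projective indecomposable Gorenstein-projective is of this form.} Let $G$ be such a module. Then $G\cong\Omega^{N}(G')$ for $N$ large, because $G$ has a complete resolution. We may take $N$ larger than the Loewy length, so that $\Omega^{N}(G')$ is a sum of modules $qA$ by Step~1 applied to $\Omega(G')\subseteq$ a projective. This uses Zimmermann-type results on syzygies of monomial algebras, namely that $\Omega^2$ of any module is a sum of $pA$'s. Thus $G\cong pA$ for a single path $p$.

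Next, $G$ is also an arbitrarily high syzygy of a Gorenstein-projective module, and $\Omega$ is an equivalence on the stable category. So $pA$ lies on an $\Omega$-cycle of indecomposables $pA=p_1A,\ p_2A,\dots$ in which each $\Omega(p_iA)=p_{i+1}A$ is indecomposable. This forces a unique annihilator $p_{i+1}$ of $p_i$, which gives \ref{P1} and \ref{P2}. Finiteness of the set of paths makes the sequence eventually periodic, and stable invertibility of $\Omega$ makes it purely periodic.

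\textbf{Main obstacle.} The hardest part is deriving \ref{P3}, which requires the condition $\Ext^{i}(G,A)=0$. The plan is to use that the dual complex must be exact at $Ae_{s(q)}$. If some longer $p'$ with $p'q=0$ were not an extension of $p$, then $p'$ would give a cycle in the dual complex that is not a boundary. Finally, injectivity of $p\mapsto pA$ follows because $pA\cong p'A$ forces equal tops and equal annihilators, and a perfect path is determined by its successor together with its target.
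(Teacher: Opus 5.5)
The paper gives no proof of its own here. It quotes \cite[Theorem~4.1]{CSZ18} (see also \cite[Theorem~3.7]{LZ24}), and your outline follows the Chen--Shen--Zhou strategy. Steps~1 and~2 are fine, up to an index slip: the differential $e_{t(p_{i+1})}A\to e_{t(p_i)}A$ is left multiplication by $p_{i+1}$.

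\textbf{The gap is in Step~3.} You assert that the successor sequence $p=p_1,p_2,\dots$ is purely periodic \emph{as a sequence of paths}. From this you conclude that the particular path $p$ with $G\cong pA$ is perfect, with (P3) for $(p_1,p_2)$ coming from exactness of the dual complex. That is false. Let $Q$ have an arrow $c\colon 0\to 1$ and a loop $x$ at $1$, with $I=\langle x^2\rangle$; this is gentle, hence a string algebra.
\begin{itemize}
\item $cxA=\mathbf{k}\,cx\cong S_1\cong xA$ is Gorenstein-projective and non-projective.
\item The successor sequence of $cx$ is $cx,x,x,\dots$.
\item But $(cx,x)$ violates (P3): $x\cdot x=0$, while $cx$ is not a suffix of $x$.
\item Correspondingly, the dual of $\cdots\to e_1A\xrightarrow{x\cdot}e_1A\xrightarrow{cx\cdot}e_0A$ is not exact at $Ae_1$: the element $x$ is a cycle but not a boundary.
\end{itemize}
Stable invertibility of $\Omega$ only makes the isomorphism classes $[p_iA]$ periodic. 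This does not pass to the paths, because $p\mapsto pA$ is not injective on non-perfect paths.

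\textbf{How to repair it.} The fix uses your own ingredients but avoids the first index.
\begin{itemize}
\item From $0\to p_iA\to e_{t(p_{i-1})}A\to p_{i-1}A\to 0$ and $\Ext^1_A(p_{i-1}A,A)=0$, every $z\in Ae_{t(p_i)}$ with $zp_{i+1}=0$ has the form $yp_i$. This is (P3) for $(p_i,p_{i+1})$, but only for $i\geqslant 2$.
\item Choose $2\leqslant j<k$ with $p_j=p_k$. Then $(p_j,\dots,p_{k-1})$ is a perfect path sequence.
\item Since $\Omega^{j-1}G\cong\Omega^{k-1}G$ and $\Omega$ is an auto-equivalence of $\underline{\mathsf{Gproj}}(A)$, you get $G\cong\Omega^{N(k-j)}G\cong p_{1+N(k-j)}A$ for $N$ large. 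That index lies in the periodic range.
\end{itemize}
So $G\cong p'A$ for some perfect $p'$, which need not be the path you started with.

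\textbf{Two smaller points also need an argument.}
\begin{itemize}
\item You deduce a unique minimal annihilating path from stable indecomposability of $\Omega(p_iA)=\bigoplus_j q_jA$. This requires knowing that the syzygy of a Gorenstein-projective module has no projective summand. Alternatively, argue directly: a map which is the inclusion on $q_1A$ and zero on $q_2A$ cannot extend to a map $e_{t(p_i)}A\to A$, contradicting $\Ext^1_A(p_iA,A)=0$.
\item In the injectivity step, an isomorphism $pA\cong p'A$ only gives $q'A=uqA$ for a unit $u\in e_{t(p)}Ae_{t(p)}$. The path basis then forces $q'=aqb$ and $q=a'q'b'$ for paths $a,b,a',b'$, hence $q=q'$. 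Only then does (P3) give $p=p'$.
\end{itemize}
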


\begin{corollary}\label{cor:CM-finite}
Every string algebra is CM-finite, and
\[
\#\operatorname{ind}(\mathsf{Gproj}(A)) =|Q_0|+|\perpath(A)|.
\]
Moreover, $A$ is CM-free if and only if $\perpath(A)=\varnothing$.
\end{corollary}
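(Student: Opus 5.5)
The corollary follows from Theorem~\ref{thm:perfect-path-GP} together with the finiteness of the set of perfect paths. The argument has three steps.

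First, we show that $\perpath(A)$ is finite. Since $I$ is admissible, $R_Q^m\subseteq I$ for some $m\geqslant 2$. Every perfect path $p$ satisfies $p\notin I$ by Definition~\ref{def:perfect-path}, so $p$ has length less than $m$. Since $Q$ is finite, there are only finitely many paths of length less than $m$. Hence $\perpath(A)$ is a finite set.

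Second, we read off the count. By Theorem~\ref{thm:perfect-path-GP}, $\operatorname{ind}(\mathsf{Gproj}(A))$ is the disjoint union of $\{e_iA\mid i\in Q_0\}$ and $\{pA\mid p\in\perpath(A)\}$.
\begin{itemize}
  \item The modules $e_iA$ are pairwise non-isomorphic indecomposable projectives, because $Q$ has no relations collapsing vertices and the tops $S_i$ are distinct. So the first set has exactly $|Q_0|$ elements.
  \item The assignment $p\mapsto pA$ is a bijection onto the isomorphism classes of non-projective indecomposable Gorenstein-projectives. So the second set has exactly $|\perpath(A)|$ elements.
  \item The union is disjoint because the $pA$ are non-projective.
\end{itemize}
Adding the two counts gives $\#\operatorname{ind}(\mathsf{Gproj}(A))=|Q_0|+|\perpath(A)|$, which is finite by the first step. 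Therefore $A$ is CM-finite.

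Third, we prove the CM-free criterion. Since $\operatorname{mod}A$ is Krull--Schmidt and $\mathsf{Gproj}(A)$ is closed under direct summands, every Gorenstein-projective module is projective if and only if every indecomposable Gorenstein-projective module is projective. By Theorem~\ref{thm:perfect-path-GP}, the non-projective indecomposable Gorenstein-projectives are in bijection with $\perpath(A)$. So this holds exactly when $\perpath(A)=\varnothing$.

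No step presents a real obstacle. The only point needing care is the finiteness of $\perpath(A)$, which rests on the admissibility bound $R_Q^m\subseteq I$ combined with the requirement $p\notin I$.
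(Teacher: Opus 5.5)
Your proof is correct and follows the paper's argument. You obtain finiteness of $\perpath(A)$ from the fact that only finitely many paths avoid $I$, which you make explicit via $R_Q^m\subseteq I$ where the paper cites finite-dimensionality of the monomial algebra. The count and the CM-free criterion are then read off from Theorem~\ref{thm:perfect-path-GP}, just as in the paper.
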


\begin{proof}
There are only finitely many nonzero paths in the finite-dimensional monomial algebra $A$, so $\perpath(A)$ is finite.
The remaining assertions follow from Theorem~\ref{thm:perfect-path-GP}.
\end{proof}

\subsection{Relation cycles and the G-condition}
Throughout this paper, a relation means a minimal monomial generator of $I$.
In particular, no proper subpath of a relation belongs to $I$.

\begin{definition}[{\!\!\cite[Section~2]{LZ24}}]\label{def:relation-cycle} \rm
Let \(C=\alpha_1\cdots\alpha_n\) be a primitive oriented cyclic word in \(Q\).
A relation cycle on \(C\) is a cyclic family of minimal relations whose successive occurrences overlap along the periodic word \(C^\infty\).
More precisely, index the occurrences by \(i\in\mathbb Z\), and represent the \(i\)-th occurrence by an interval \([a_i,b_i)\).
We require
\[
a_i<a_{i+1}<b_i<b_{i+1}, \quad (a_{i+d},b_{i+d})=(a_i+n,b_i+n)
\]
for every \(i\in\mathbb Z\) and some \(d\geqslant1\).
Thus consecutive relations have a nontrivial proper overlap and advance around the cycle.
Relations may wind more than once around \(C\).
\end{definition}

\begin{definition}\label{def:full-relational-cycle} \rm
A primitive oriented cyclic word $C=\alpha_1\cdots\alpha_n$ is a \defines{full-relation oriented cycle} if $\alpha_i\alpha_{i+1}\in I$ for every $i\in\mathbb Z/n\mathbb Z$.
A relation cycle is called \defines{gentle} if all its relations are quadratic.
\end{definition}

    Every path of length two belonging to $I$ is therefore a minimal relation.
    {Therefore} a full-relation oriented cycle is a gentle relation cycle, where ``gentle'' refers only to the 2-relations.

\begin{definition}[{\!\!\cite[Section~4.1]{LZ24}}] \rm \label{def:G-condition}
A relation cycle \defines{supports a perfect path sequence} if the concatenation of the sequence is a power of its underlying cyclic word, and the successive products $p_ip_{i+1}$ occur among its relations.
A string algebra satisfies the \defines{G-condition} if every relation cycle supporting a perfect path sequence is gentle.
\end{definition}

For a perfect pair $(p,q)$, the path $pq$ is a minimal relation: a minimal relation inside $pq$ must meet both $p$ and $q$, and \ref{P2} and \ref{P3} force its final and initial points to be those of $pq$.
Consequently, the successive products in a perfect path sequence form overlapping minimal relations.
Thus, for string algebras, the G-condition is equivalently expressed as $\perpath(A)\subseteq Q_1$.
Indeed, a quadratic product $p_ip_{i+1}$ forces both factors to have length one; conversely, a sequence of perfect arrows {provides} only quadratic relations on its cycle.
Long minimal relations away from these cycles are allowed, as in Example~\ref{ex:nongentle-G}.

\subsection{Support \texorpdfstring{$\tau$}{tau}-tilting pairs}

For an $A$-module $M$, let $|M|$ be the number of isomorphism classes of indecomposable direct summands of a basic representative.
Recall from \cite{AIR14} that $M$ is \defines{$\tau$-rigid} if $\Hom_A(M,\tau M)=0$.
A pair $(M,P)$ with $P$ projective is a \defines{$\tau$-rigid pair} if $M$ is $\tau$-rigid and $\Hom_A(P,M)=0$; it is a \defines{support $\tau$-tilting pair} if, in addition, $|M|+|P|=|Q_0|$.
We call $M$ a \defines{Gorenstein-projective support $\tau$-tilting module} if it is the module part of a support $\tau$-tilting pair and $M\in\mathsf{Gproj}(A)$, following \cite{XZ24}.
The projective part $P$ is a separate component of the pair; the condition $\Hom_A(P,M)=0$ implies $\add P\cap\add M=0$.

We shall use the following maximality criterion.

\begin{proposition}[{\!\!\cite[Corollary~2.13]{AIR14}}]\label{prop:AIR-maximality}
A basic $\tau$-rigid pair is support $\tau$-tilting if and only if it is maximal with respect to adjoining indecomposable summands while preserving the $\tau$-rigid-pair conditions.
\end{proposition}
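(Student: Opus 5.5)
The plan is to reduce both implications to two standard facts from \cite{AIR14}. The first is the \emph{upper bound}: every basic $\tau$-rigid pair $(M,P)$ satisfies $|M|+|P|\leqslant|Q_0|$. The second is the \emph{completion theorem}: every basic $\tau$-rigid pair is a direct summand of some basic support $\tau$-tilting pair. That is, there exist $X$ and $P'$ such that $(M\oplus X,\,P\oplus P')$ is support $\tau$-tilting. Granting these, the proposition is a short counting argument.

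\emph{($\Rightarrow$)} Suppose $|M|+|P|=n:=|Q_0|$, and suppose we adjoin a new indecomposable summand to $M$ or to $P$ while keeping a $\tau$-rigid pair. The new basic pair then has $n+1$ summands, which contradicts the upper bound. So $(M,P)$ is maximal.

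\emph{($\Leftarrow$)} Suppose $(M,P)$ is maximal, and apply the completion theorem to obtain $(M\oplus X,P\oplus P')$ support $\tau$-tilting. Each indecomposable summand of $X$ or $P'$ not already in $\add M$ or $\add P$ could be adjoined to $(M,P)$ while preserving the $\tau$-rigid-pair conditions, since these conditions pass to direct summands. By maximality, no such summand exists. Hence $|M|+|P|=n$, and $(M,P)$ is support $\tau$-tilting.

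It remains to justify the two facts. Write $P=eA$ for an idempotent $e$ and set $B=A/\langle e\rangle$.

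\emph{Reduction to $B$.} The condition $\Hom_A(P,M)=0$ means $Me=0$, so $M$ is a $B$-module. By the comparison $\tau_B M\hookrightarrow \tau_A M$ (the standard lemma that $\tau_B M$ is a submodule of $\tau_AM$ for such quotients), $M$ is $\tau$-rigid over $B$. Conversely, $\tau$-rigid $B$-modules are $\tau$-rigid pairs with $P$ over $A$. Since $|B|=n-|P|$, both facts reduce to statements about $\tau$-rigid modules over $B$: that $|M|\leqslant|B|$, and that $M$ completes to a support $\tau$-tilting $B$-module.

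\emph{Completion over $B$.} I would consider the torsion class
\[
\mathcal T={}^{\perp}(\tau M)\subseteq\operatorname{mod}B,
\]
which is functorially finite and contains $M$ by $\tau$-rigidity. Let $U$ be the basic direct sum of its Ext-projective objects. The Auslander--Smal\o\ correspondence shows that $U$ is $\tau$-rigid and contains $M$ as a summand. It also shows that $U$ is support $\tau$-tilting in the counting sense: $|U|$ equals the number of simple $B$-modules not annihilated by $\mathcal T$, i.e. $|U|$ plus the number of simples killed by $\mathcal T$ equals $|B|$. This uses the left $\add U$-approximation of $B$ together with the exact sequence $B\to U'\to U''\to0$ built from it.

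\emph{Upper bound.} The same sequence gives the bound: the $g$-vectors of the indecomposable summands of a $\tau$-rigid pair are linearly independent in $\mathbb Z^n$, which yields $|M|+|P|\leqslant n$.

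The main obstacle is the Bongartz-type step, namely verifying that the Ext-projectives of $\mathcal T$ achieve exactly the right number of summands. I would first try to derive it from the approximation sequence $B\xrightarrow{f}U'\to\operatorname{Cok}f\to0$ with $f$ a left $\add M$-approximation in $\mathcal T$, checking that $U'\oplus\operatorname{Cok}f$ is Ext-projective and generates $\mathcal T$.
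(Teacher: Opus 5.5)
Your argument is correct and is the standard derivation: the paper gives no proof beyond citing \cite[Corollary~2.13]{AIR14}, which is obtained there as you do. One passes to $B=A/\langle e\rangle$, completes to a support $\tau$-tilting pair, and compares with the bound $|M|+|P|\leqslant|Q_0|$; that bound already follows from the completion with its exact summand count, so the $g$-vector remark is unnecessary.

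Two details in your sketch of the background facts need care. First, $\tau_BM\hookrightarrow\tau_AM$ gives only one direction of the reduction. The converse, which you need to lift the completion back to $A$, holds because a minimal $A$-presentation of a $B$-module stays minimal modulo $\langle e\rangle$, so Proposition~\ref{prop:projective-presentation-tau-rigid} applies over both algebras. Second, the Ext-projectives of ${}^{\perp}(\tau_BM)$ come from a left ${}^{\perp}(\tau_BM)$-approximation of $B$, since a left $\add M$-approximation only sees the smaller torsion class $\Fac M$.
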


\begin{proposition}[{\!\!\cite[Proposition~2.4 (b), (c)]{AIR14}}]
\label{prop:projective-presentation-tau-rigid}
Let $P_1\xrightarrow{d_1}P_0\to M\to0$ be a minimal projective presentation, and let $N$ be an $A$-module.
Then $\Hom_A(N,\tau M)=0$ if and only if
\[
\Hom_A(P_0,N)\xrightarrow{\Hom_A(d_1,N)}\Hom_A(P_1,N)
\]
is surjective.
In particular, taking $N=M$ {provides} the criterion for $\tau$-rigidity.
\end{proposition}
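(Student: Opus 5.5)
The plan is to reduce the statement to the definition of the Auslander--Reiten translate as $\tau M=D\operatorname{Tr}M$, and then use right exactness of the tensor product. Throughout, $D=\Hom_{\mathbf{k}}(-,\mathbf{k})$ and $(-)^{*}=\Hom_A(-,A)$. Modules are right modules, so $P^{*}$ is a left $A$-module for every projective right module $P$.

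First, apply $(-)^{*}$ to the minimal projective presentation $P_1\xrightarrow{d_1}P_0\to M\to0$. This gives an exact sequence of left modules
\[
P_0^{*}\xrightarrow{d_1^{*}}P_1^{*}\longrightarrow\operatorname{Tr}M\longrightarrow0 .
\]
Because the presentation is minimal, this cokernel is exactly the Auslander--Bridger transpose $\operatorname{Tr}M$. In particular, no correction for projective summands of $M$ is needed, and by definition $\tau M=D\operatorname{Tr}M$.

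Second, fix $N\in\operatorname{mod}A$ and apply the right exact functor $N\otimes_A-$ to this sequence. This yields an exact sequence
\[
N\otimes_A P_0^{*}\longrightarrow N\otimes_A P_1^{*}\longrightarrow N\otimes_A\operatorname{Tr}M\longrightarrow0 .
\]
For finitely generated projective $P$ there is a natural isomorphism $N\otimes_A P^{*}\cong\Hom_A(P,N)$, given by $n\otimes f\mapsto(x\mapsto nf(x))$. It is checked on $P=A$ and then extended by additivity. Naturality in $P$ identifies the first map with $\Hom_A(d_1,N)$. Hence
\[
N\otimes_A\operatorname{Tr}M\cong\operatorname{Coker}\Hom_A(d_1,N).
\]

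Third, use tensor--Hom adjunction with respect to $\mathbf{k}$:
\[
\Hom_A(N,D\operatorname{Tr}M)\cong D(N\otimes_A\operatorname{Tr}M).
\]
It follows that $\Hom_A(N,\tau M)\cong D\operatorname{Coker}\Hom_A(d_1,N)$. Since $D$ is a duality on finite-dimensional spaces, the left side vanishes if and only if $\Hom_A(d_1,N)$ is surjective. The final assertion is the special case $N=M$, which is precisely the definition of $\tau$-rigidity.

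The only delicate points are bookkeeping ones. The identification $N\otimes_A P^{*}\cong\Hom_A(P,N)$ must be natural, so that the two maps genuinely coincide. The left/right module sides must be tracked correctly when forming $\operatorname{Tr}M$ and applying the adjunction. Both are routine, so I do not expect a real obstacle.
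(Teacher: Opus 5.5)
Your proof is correct. The paper gives no proof of its own here and only cites \cite[Proposition~2.4]{AIR14}; that result rests on the same computation you give, namely applying $(-)^{*}$ to the minimal presentation to get $\operatorname{Tr}M$ and then identifying $\Hom_A(N,\tau M)$ with the dual of $\operatorname{Coker}\Hom_A(d_1,N)$, via $N\otimes_A P^{*}\cong\Hom_A(P,N)$ and tensor--Hom adjunction.
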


\subsection{Labelled tiled surfaces and labelled tiling algebras}\label{subsection: geo-module categories}

We use the labelled tiled surfaces of \cite{BC24}, extending the gentle surface model of \cite{BC21,BC21-corr}.
The locally gentle reconstruction is that of \cite{PPP19}.

\begin{definition}\rm
A \textit{marked surface} is a pair $(\mathcal{S},\mathcal{M})$, where
\begin{enumerate}[label=(\arabic*)]
  \item $\mathcal{S}$ is a compact connected oriented surface with possibly empty boundary $\partial \mathcal{S}=\sqcup_{i=1}^{b}\partial_i \mathcal{S}$;
  \item $\mathcal{M}=\mathcal{M}_{\circ}\cup\mathcal{P}_{\circ}\cup\mathcal{M}_{\rpoint}\cup\mathcal{P}_{\rpoint}$ is a finite set of marked points on $\mathcal{S}$, represented by the symbols $\circ$ and $\rpoint$, respectively.
Each boundary component $\partial_i \mathcal{S}$ contains at least one marked point of each color from $\mathcal{M}_{\circ}\cup\mathcal{M}_{\rpoint}$, and the points $\circ$ and $\rpoint$ occur alternately.
The elements of $\mathcal{P}_{\circ}\cup\mathcal{P}_{\rpoint}$ lie in the interior of $\mathcal{S}$.
\end{enumerate}
\end{definition}

We will often simply write $\mathcal{S}$ for $(\mathcal{S},\mathcal{M})$.
For a disconnected algebra we allow a finite disjoint union of marked surfaces and apply all constructions componentwise.

\begin{definition}\label{def:arc} \rm
Let $\mathcal{S}$ be a marked surface.
\begin{enumerate}[label=(\arabic*)]
  \item An {\it arc} $\gamma$ in $\mathcal{S}$ is a curve in $\mathcal{S}$ such that its only intersections with $\mathcal{M}$ are the endpoints of $\gamma$.
An arc is called a $\circ$-arc if its endpoints belong to $\mathcal{M}_{\circ}\cup\mathcal{P}_{\circ}$, and an $\rpoint$-arc if both of its endpoints belong to $\mathcal{M}_{\rpoint}\cup\mathcal{P}_{\rpoint}$.
Curves are considered up to homotopy relative to their endpoints.
  \item Two arcs $\gamma_1$ and $\gamma_2$ are {\it intersecting} if they have a common endpoint.
  \item Two arcs $\gamma_1$, $\gamma_2$ are {\it non-crossing} if there exist representatives in their isotopy classes which do not meet anywhere except possibly at endpoints.
The arcs are called {\em crossing} otherwise.
  \item A collection of arcs is {\it non-crossing} if none of the arcs has self-crossings in the interior of $\mathcal{S}$ and the arcs are pairwise non-crossing.
\end{enumerate}
\end{definition}

\begin{definition}\label{def:dissection} \rm
A non-crossing collection \( \mathsf{P} \) of $\rpoint$-arcs is called a \textit{dissection} of \( \mathcal{S} \) if no arc of $\mathsf{P}$ is isotopic to a boundary segment or to a point, and if the arcs cut the surface into polygons each of which contains exactly one $\bpoint$-point from $\mathcal{M}_{\circ}\cup\mathcal{P}_{\circ}$, see Figure \ref{fig:tiles}.
If \( \mathsf{P} \) is a dissection of \( \mathcal{S} \), the pair \( (\mathcal{S}, \mathsf{P}) \) is also called a \defines{tiled surface}.
\end{definition}

\begin{figure}[htbp]
\centering

\begin{tikzpicture}[>=stealth,scale=0.7]
  \draw [line width=4pt, gray!50] (-6,-0.1) to (-3,-0.1);
  \draw [line width=2pt, black] (-6,0) to (-3,0);
  \draw [line width=4pt, gray!50] (-1.3,-0.1) to (1.3,-0.1);
  \draw [line width=2pt, black] (-1.3,0) to (1.3,0);

  \draw[red, line width=1pt] (-6,0) to[out=90, in=-30](-7,1.5);
  \draw[red, line width=1pt,dashed](-7,1.5)to[out=30, in=-120] (-5.5,3);
  \draw[red, line width=1pt](-5.5,3)to[out=-20,in=-160](-3.8,3);
  \draw[red, line width=1pt,red,dashed](-3.8,3)to[out=-80,in=160](-2.5,1.5);
  \draw[red, line width=1pt] (-2.5,1.5) to[out=-120, in=90](-3,0) ;

  \draw[red, line width=1pt] (-1.3,0) to[out=60, in=120](1.3,0)  ;

  \draw[red, line width=1pt] (6,0) to (6.5,1.5);
  \draw[red, line width=1pt](6.5,1.5) to[out=160, in=-80] (5.5,3);
  \draw[red, line width=1pt,dashed](5.5,3)to(3.8,3);
  \draw[red, line width=1pt](3.8,3)to[out=-100,in=45](3,1.5);
  \draw[red, line width=1pt](3,1.5) to (4,0);
  \draw[red, line width=1pt,dashed](4,0) to(6,0) ;

  \draw[red, line width=1pt,red] (9,0) to[out=140, in=180](9,2.5)  to[out=0, in=40](9,0)  ;

  \draw[thick,black, fill=white ] (-4.5,0) circle (0.1);
  \draw[thick,black, fill=white ] (0,0) circle (0.1);
  \draw[thick,black, fill=white ] (5,1) circle (0.1);
  \draw[thick,black, fill=white ] (9,1.3) circle (0.1);

  \draw[thick,red, fill=red ] (-6,0) circle (0.1);
  \draw[thick,red, fill=red ] (-3,0) circle (0.1);
  \draw[thick,red, fill=red ] (-1.3,0) circle (0.1);
  \draw[thick,red, fill=red ] (1.3,0) circle (0.1);
  \draw[thick,red, fill=red ] (4,0) circle (0.1);
  \draw[thick,red, fill=red ] (6,0) circle (0.1);

  \draw[thick,red, fill=red ] (-7,1.5) circle (0.1);
  \draw[thick,red, fill=red ] (-5.5,3) circle (0.1);
  \draw[thick,red, fill=red ] (-3.8,3) circle (0.1);
  \draw[thick,red, fill=red ] (-2.5,1.5) circle (0.1);
  \draw[thick,red, fill=red ] (5.5,3) circle (0.1);
  \draw[thick,red, fill=red ] (3.8,3) circle (0.1);
  \draw[thick,red, fill=red ] (3,1.5) circle (0.1);
  \draw[thick,red, fill=red ] (6.5,1.5) circle (0.1);

  \draw[thick,red, fill=red ] (9,0) circle (0.1);

  \draw (-4.5,-0.7) node {(i.a)};
  \draw (0,-0.7) node {(i.b)};
  \draw (5,-0.7) node {(ii.a)};
  \draw (9,-0.7) node {(ii.b)};
\end{tikzpicture}

\caption{The types of polygons in a dissection of Definition \ref{def:dissection}.
Figure (i.b) is the special case of a polygon with boundary segments and of size $3$.
Figure (ii.b) is the special case of a polygon of size $1$.}
\label{fig:tiles}

\end{figure}

Let $p \in \mathcal{M}_{\rpoint}\cup\mathcal{P}_{\rpoint}$.
If $p \in \mathcal{M}_{\rpoint}$, let $p'$ and $p''$ be two points in the same boundary component such that $p', p'' \not\in \mathcal{M}$ and $p$ is the only marked point in $\mathcal{M}$ lying in the boundary segment between $p'$ and $p''$.
Consider a curve $\delta$ isotopic to this boundary segment but such that the only intersection with the boundary of $\mathcal{S}$ is at its endpoints.
If $p \in \mathcal{P}_{\rpoint}$ , consider a closed simple curve $\delta$ in the interior of $\mathcal{S}$ around $p$ and with no intersections with $\mathcal{M}$.

At a boundary point $p$, the \defines{complete fan at $p$} is the finite sequence of arc incidences crossed by $\delta$ in clockwise order.
At a puncture it is the bi-infinite periodic sequence obtained by repeating this cyclic order.
A \defines{fan} is a finite interval of consecutive incidences.
Thus a fan at a puncture may wind several times around it; angles are identified modulo one period.
If an arc has both ends at $p$, its two incidences are distinguished.

Any two consecutive crossings of the curve $\delta$ with arcs in $\mathsf{P}$ define a triangle whose vertices are these two crossings and the marked point $p$; see Figure~\ref{fig:angle}.
This triangle is called an {\it angle at $p$} (also called {\it angle of $\mathsf{P} $}), and we say the curve $\delta$ {\it cuts this angle at $p$}.

\begin{figure}[ht!]
\centering
\begin{tikzpicture}[scale=1.2]
\fill[gray!50] (-1.5,0) rectangle (1.5,0.1);
\draw[line width=1.5pt] (-1.5,0) -- (1.5,0);
\node at (-1,0.25) {$p'$};
\node at (1,0.25) {$p''$};

\draw[green, fill=green!10]
(0,0) -- (0.6,-0.6)
arc(-45:-90:0.85)
-- cycle;

\node[right] at (0.7,-0.5) {$\delta$};
\draw[red] (0,0) -- (1,-1) ;
\draw[red] (0,0)--(-1,-1) ;
\draw[red] (0,0)--(0,-1.35) ;

\draw[black!80] (0,0) -- (0.85,0) arc(0:-180:0.85) -- cycle;
\fill[red] (0,0) circle (2pt) node[above] {$p$};
\end{tikzpicture}
\hspace{3em}
\begin{tikzpicture}[scale=1.2]

\node[right] at (0.6,-0.8) {$\delta$};
\draw[green, fill=green!10] (0,0) -- (1,0) arc(0:-60:1) -- cycle;
\draw[red] (0,0) -- (1.25,0) ;
\draw[red] (0,0) -- (0.6,-1) ;
\draw[red] (0,0) -- (-0.8,1) ;
\draw[red] (0,0) -- (-0.2,-1.1);
\fill[red] (0,0) circle (2pt) node[above right] {$p$};

\draw [black!80](0,0) circle (1);
\end{tikzpicture}
\hspace{3em}
\begin{tikzpicture}[scale=1.2]
\draw[black!80, fill=green!10] (0,0) circle (1);
\fill[red] (0,0) circle (2pt) node[above] {$p$};
\draw[red] (0,0) -- (0,-1.2) ;
\node[right] at (0.7,-0.7) {$\delta$};

\end{tikzpicture}

\caption{The shaded triangle is an angle at $p$.
A puncture $p \in \mathcal{P}_{\rpoint}$ can have a unique angle which is a self-folded triangle.}
\label{fig:angle}
\end{figure}

\begin{definition} \rm
Let \( (\mathcal{S}, \mathsf{P}) \) be a tiled surface.
We define \( Q := Q(\mathcal{S}, \mathsf{P}) \) to be the quiver defined as follows:
\begin{enumerate}[label=(\arabic*)]
  \item The vertices in $Q$ are in bijection with the arcs in $\mathsf{P}$.
  \item The arrows in $Q$ are in bijection with the angles at the marked points in $ \mathcal{M}_{\rpoint}\cup\mathcal{P}_{\rpoint}$.
In other words, there is an arrow $a\colon x_i \to x_j$ in $Q$ if and only if the corresponding arcs ${x}_i$ and ${x}_j$ share an endpoint $p_a \in \mathcal{M}_{\rpoint}\cup\mathcal{P}_{\rpoint}$ and ${x}_j$ is the immediate successor of ${x}_i$ in the complete fan at $p_a$.
\end{enumerate}
\end{definition}

\begin{definition}\label{def:label} \rm
Let \( (\mathcal{S}, \mathsf{P}) \) be a tiled surface.
\begin{enumerate}[label=(\arabic*)]
  \item A {\it label} at a point $p\in \mathcal{M}_{\rpoint}\cup\mathcal{P}_{\rpoint}$ is a finite fan at $p$ of length $\geqslant 3$.
  \item Given two labels $\ell$ and $\ell'$, if $\ell$ is a consecutive subsequence of $\ell'$, then we say the label $\ell'$ is {\it redundant}.
\end{enumerate}
\end{definition}

\begin{notation} \rm
We will draw a label $\ell$ as a blue curve $\delta$ around the corresponding marked point with an orientation such that $\delta (0)$ is a point in the first arc of $\ell$, $\delta (1)$ is a point in the last arc of $\ell$ and if $\ell$ contains an arc ${x}$ multiple times, the corresponding intersections of $\delta$ with arc ${x}$ are at distinct points.
This is in order to make it clear what the label is from $\delta$, especially in the case when the label starts and ends at the same arc.
See Example~\ref{ex:not unique} and Figure~\ref{fig:non-homotopic}.
\end{notation}

\begin{definition}\label{def:tiled surface} \rm
Let \( (\mathcal{S}, \mathsf{P}) \) be a tiled surface and \( \mathsf{L} \) be a finite set of labels.
Let \( Q = Q(\mathcal{S}, \mathsf{P}) \) and consider the path algebra \( \mathbf{k}Q \).
We define \( I_{\mathsf{P},\mathsf{L}} \) to be the ideal of \( \mathbf{k}Q \) generated by the following paths (see Figure \ref{fig:relation}):
\begin{enumerate}[label=(R\arabic*)]
\item $ab$ of length 2 such that $p_{a} \neq p_{b}$ or $p_{a} = p_{b}$ and $t(a) = s(b)$ corresponds to a loop arc.
\label{R1}

\item linearly oriented paths in \( Q \) corresponding to a label in \( \mathsf{L} \).
\label{R2}
\end{enumerate}

Relations of type \ref{R1} come from the tiles in \( (\mathcal{S}, \mathsf{P}) \), as certain compositions of two arrows within a polygon, and these are exactly the relations used in the construction of (locally) gentle algebras via surfaces.
For string algebras, we need also to consider labels and corresponding relations of type \ref{R2}.
\begin{figure}[ht]
\centering
\begin{tikzpicture}[>=stealth,scale=0.75]
	% ========== ?1?? (R1:a) ==========
	\begin{scope}
		\draw [line width=1pt, red] (-6.5,-0.5) to(-8,0)to(-8,2)to(-6.5,2.5);
		\draw [bend left, line width=0.8pt, black,->] (-7.5,2.15) to (-8,1.7);
		\draw [bend left, line width=0.8pt, black,->] (-8,0.3) to (-7.5,-0.15);
		\draw[thick,red, fill=red ] (-8,0) circle (0.1);
		\draw[thick,red, fill=red ] (-8,2) circle (0.1);
		\draw (-7.5,1.6) node[font=\scriptsize] {$\mathbf{a}$};
		\draw (-7.5,0.4) node[font=\scriptsize] {$\mathbf{b}$};
		\draw (-8.4,2.1) node[red,font=\scriptsize] {$\mathbf{p_a}$};
		\draw (-8.4,-0.1) node[red,font=\scriptsize] {$\mathbf{p_b}$};
		\draw (-7,-1.3) node {$(R1:a)$};
	\end{scope}
	
	% ========== ?2?? (R1:b1)?xshift=+1cm ==========
	\begin{scope}[xshift=1.5cm]
		\draw [line width=1pt, red] (-3.9,1.3) to(-3.5,-0.2)to(-3.1,1.3);
		\draw[bend right, line width=1pt,red] (-5.3,0.8) to(-3.5,-0.2);
		\draw[bend right, line width=1pt,red] (-3.5,-0.2) to(-1.7,0.8);
		\draw[red, line width=1pt] (-3.5,-0.2) to[out=160, in=180](-3.5,2)to[out=0, in=20](-3.5,-0.2);
		\draw [bend left, line width=0.8pt, black,->] (-4.3,-0.1) to (-4,0.3);
		\draw [bend left, line width=0.8pt, black,->] (-3,0.3) to (-2.7,-0.1);
		\draw[thick,red, fill=red ] (-3.5,-0.2) circle (0.1);
		\draw (-3.5,1) node[red] {$\cdots$};
		\draw (-4,0) node[font=\scriptsize] {$\mathbf{a}$};
		\draw (-3,0) node[font=\scriptsize] {$\mathbf{b}$};
		\draw (-3.5,-0.6) node[red,font=\scriptsize] {$\mathbf{p_{a}=p_{b}}$};
		\draw (-3.5,-1.3) node {$(R1:b_{1})$};
	\end{scope}
	
	% ========== ?3?? (R1:b2)?xshift=+2cm ==========
	\begin{scope}[xshift=3cm]
		\draw [line width=1pt, red] (0,1.8) to (1,-0.2)to (2,1.8);
		\draw[red, line width=1pt] (1,-0.2) to[out=170, in=-90](-1,1.3)to[out=90, in=180](1,2.8)to[out=0, in=90](3,1.3)to[out=-90, in=10](1,-0.2);
		\draw [bend left, line width=0.8pt, black,->] (0.2,0) to (0.7,0.4);
		\draw [bend left, line width=0.8pt, black,->] (1.3,0.4) to (1.8,0);
		\draw[thick,red, fill=red ] (1,-0.2) circle (0.1);
		\draw (1,1) node[red] {$\cdots$};
		\draw (0.2,0.4) node[font=\scriptsize] {$\mathbf{b}$};
		\draw (1.8,0.4) node[font=\scriptsize] {$\mathbf{a}$};
		\draw (1,-0.6) node[red,font=\scriptsize] {$\mathbf{p_{a}=p_{b}}$};
		\draw (1,-1.3) node {$(R1:b_{2})$};
	\end{scope}
	
	% ========== ?4?? (R2)?xshift=+3cm ==========
	\begin{scope}[xshift=5cm]
		\draw [line width=1pt, red] (3.2,2)to (4.5,-0.2) to (4.5,2.5)to (4.5,-0.2)to (5.8,2);
		\draw [bend left, line width=1.2pt, blue,->] (4,0.7) to (5,0.7);
		\draw[thick,red, fill=red ] (4.5,-0.2) circle (0.1);
		\draw (4.2,1) node[red] {$\cdots$};
		\draw (4.9,1) node[red] {$\cdots$};
		\draw (4.5,-1.3) node {$(R2)$};
	\end{scope}
\end{tikzpicture}

\caption{This figure illustrates the ideal \( I_{\mathsf{P},\mathsf{L}} \) in $\mathbf{k}Q$ of the tiled surface $(\mathcal{S},\mathsf{P})$ from Definition \ref{def:tiled surface}.}
\label{fig:relation}

\end{figure}
\end{definition}

\begin{definition}
Let \( (\mathcal{S}, \mathsf{P}) \) be a tiled surface and let \( Q = Q(\mathcal{S}, \mathsf{P}) \).
Let \( \mathsf{L} \) be a finite set of labels on \( \mathcal{S} \) such that for every \( q \in \mathcal{P}_{\rpoint} \), \( \mathsf{L} \) contains a label at \( q \).
The \textit{labelled tiling algebra} \( A_{\mathsf{P},\mathsf{L}} \) associated to the data \( (\mathcal{S}, \mathsf{P}, \mathsf{L}) \) is the bound quiver algebra \( A_{\mathsf{P},\mathsf{L}} = \mathbf{k}Q/I_{\mathsf{P},\mathsf{L}} \).
\end{definition}

If a label contains another label as a consecutive subsequence, its relation already belongs to the ideal generated by the shorter one.
We may therefore, and shall, assume that $\mathsf{L}$ contains no redundant labels.

The following result shows that the class of labelled tiling algebras coincides with the class of string algebras.

\begin{theorem}[{\!\!\cite[Theorem~3.1]{BC24}}]\label{thm:string-labelled-tiling}
Let \( A = \mathbf{k}Q/I \) be a finite-dimensional monomial algebra.
The following are equivalent.
\begin{enumerate}[label={\rm(\arabic*)}]
  \item \( A \) is a string algebra.
  \item \( A = (\mathbf{k}Q/J)/(I/J) \), where \( \mathbf{k}Q/J \) is a locally gentle algebra.
  \item \( A \) is a labelled tiling algebra of a marked surface.
\end{enumerate}
\end{theorem}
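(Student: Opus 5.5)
The plan is to prove the cycle of implications $(1)\Rightarrow(2)\Rightarrow(3)\Rightarrow(1)$. I would construct the locally gentle algebra by a local matching argument at each vertex, take its surface model from \cite{PPP19,BC21}, and then read the remaining monomial relations as labels.

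\textbf{$(1)\Rightarrow(2)$.} Let $A=\mathbf{k}Q/I$ be a string algebra and fix a vertex $v$. Consider the bipartite graph whose two sides are the arrows ending at $v$ and the arrows starting at $v$; each side has at most two elements by \ref{S1}.
\begin{itemize}
  \item Call a pair $(a,b)$ \emph{allowed} if $ab\notin I$. By \ref{S2} the allowed pairs form a partial matching.
  \item Extend this to a maximal matching $\mathcal{M}_v$ of the complete bipartite graph. This is possible because both sides have size at most two.
  \item Let $J$ be generated by all $ab$ with $t(a)=s(b)=v$ and $(a,b)\notin\mathcal{M}_v$, over all vertices $v$.
\end{itemize}
Every allowed pair lies in $\mathcal{M}_v$, so every generator of $J$ lies in $I$; hence $J\subseteq I$.

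Next I would check that $(Q,J)$ satisfies \ref{S1}, \ref{S2}, \ref{G1} and \ref{G2}. For each arrow $a$ with $t(a)=v$, at most one out-arrow $b$ is matched to $a$, so at most one $b$ has $ab\notin J$. If there are two out-arrows, exactly one of them is unmatched to $a$, so at most one $b'$ has $ab'\in J$. The same argument applies with the roles of in- and out-arrows exchanged.

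The one subtle case is when the matching has fewer edges than the larger side permits. I would make it maximal so that the counts of $J$-relations never exceed one. Since finite-dimensionality is not required of $\mathbf{k}Q/J$, it is locally gentle, and $A=(\mathbf{k}Q/J)/(I/J)$.

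\textbf{$(2)\Rightarrow(3)$.} I would invoke the surface model for locally gentle algebras \cite{PPP19} (see also \cite{BC21}). It gives a tiled surface $(\mathcal S,\mathsf P)$ with $Q(\mathcal S,\mathsf P)=Q$, and the relations of type \ref{R1} generate exactly $J$.

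Take a minimal monomial generator $r$ of $I$ not in $J$. No length-two subpath of $r$ lies in $J$. In the surface model, a path with no \ref{R1}-relation is exactly a sequence of consecutive angles at one marked point $p$. Hence $r$ is a linearly oriented fan at $p$ with at least three incidences, that is, a label, and its relation is of type \ref{R2}. Letting $\mathsf L$ be the set of such labels gives $I=I_{\mathsf P,\mathsf L}$.

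It remains to put a label at every puncture $q\in\mathcal P_{\rpoint}$. The complete fan at $q$ yields an oriented cycle of $Q$ with no $J$-relation, so every power of it is nonzero in $\mathbf{k}Q/J$. Because $A$ is finite-dimensional, some power of this cycle lies in $I$. Therefore some generator of $I$ is a fan at $q$, which is the required label.

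\textbf{$(3)\Rightarrow(1)$.} I would verify the axioms directly from the surface.
\begin{itemize}
  \item \ref{S1}: each arc of $\mathsf P$ has two endpoints, and at each endpoint there is at most one successor and one predecessor in the fan.
  \item \ref{S2}: by the definition of \ref{R1}, for each arrow $a$ only the continuation at the same marked point (avoiding the loop-arc exception) survives.
  \item \ref{S3}: both \ref{R1} and \ref{R2} generators are paths, so $I_{\mathsf P,\mathsf L}$ is monomial.
  \item Admissibility: nonzero paths are fans. These are finite at boundary points and bounded by the labels at punctures.
\end{itemize}

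\textbf{Main obstacle.} The step I expect to be hardest is $(2)\Rightarrow(3)$, namely matching non-\ref{R1} paths precisely with fans. The degenerate configurations need care here: loop arcs, self-folded triangles, and labels winding several times around a puncture or starting and ending on the same arc. I would handle these by working in the universal cover near each marked point, where fans become honest linear sequences.
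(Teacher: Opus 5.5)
Your proof is correct. The paper does not prove this statement itself (it is quoted from \cite{BC24}), but its proof of Theorem~\ref{thm:geometric-End} runs the same argument for the corner algebra $\mathbf{k}Q_T/J_T$:
\begin{itemize}
\item a locally gentle ideal is chosen inside the monomial ideal;
\item the surface of \cite{PPP19} is taken for it;
\item the remaining minimal relations, which contain no tile relation and hence follow consecutive angles of a single fan, are drawn as labels;
\item finite-dimensionality forces a label at every red puncture.
\end{itemize}
This is your (2)$\Rightarrow$(3) essentially verbatim.

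The only real difference is in (1)$\Rightarrow$(2). Instead of a maximal matching at each vertex, the paper (following \cite{BC24}) fixes sign functions and takes the locally gentle part to be generated by the composable pairs $ab$ with $\sigma(b)=\varepsilon(a)$, as in Lemma~\ref{lem:compressed-minimal-relations}. Declaring $a$ and $b$ matched when $\sigma(b)=-\varepsilon(a)$ shows that the two devices do the same job: each gives a matching that contains every nonzero quadratic composition and leaves no arrow with two continuations in the locally gentle ideal. Your version is self-contained and avoids quoting the existence of sign functions. The sign functions, on the other hand, directly supply the clockwise order of incidences, with $i^{\sigma(a)}$ immediately preceding $j^{-\varepsilon(a)}$, from which the paper builds the surface.
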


By Theorem~\ref{thm:string-labelled-tiling}, the labelled tiled surface associated with a string algebra is not unique in general: different choices of the locally gentle algebra $\mathbf{k}Q/J$ may produce non-homeomorphic surfaces.
We fix one labelled tiled surface $(\mathcal{S},\mathsf{P},\mathsf{L})$ associated with $A$ for the remainder of the paper.
All algebraic statements are independent of this choice, although the representatives of the corresponding curves depend on it.
Example~\ref{ex:not unique} illustrates this phenomenon.
The uniqueness problem for such geometric models was studied in
\cite{XZ26}, where the authors characterize the string algebras whose
labelled tiled-surface models are unique up to equivalence.

\begin{example}\rm \label{ex:not unique}
Consider the quiver $Q$ and the admissible ideal $I_{1}=\langle bca,ad,ab\rangle$ in Example \ref{ex:string-gentle algebra}.
There are two choices of associated locally gentle algebras: the finite-dimensional algebra $B_1 = \mathbf{k}Q/J_1$ with $J_1 = \langle ab\rangle$ and the infinite-dimensional algebra $B_2 = \mathbf{k}Q/J_2$ with $J_2 = \langle ad\rangle$.
The corresponding surfaces, shown in Figure \ref{fig:non-homotopic}, are not homeomorphic.
\begin{figure}[htbp]
\begin{center}
\begin{tikzpicture}[scale=0.35]
  \begin{scope}[shift={(-12,0)}]
  \draw[line width=1.5pt,fill=white] (0,0) circle (6cm);
  \draw[line width=1.5pt,fill=gray!50] (0,0) circle (1cm);

  \path
  (0:1) coordinate (b1)
  (-90:1) coordinate (b2)
  (-180:1) coordinate (b3)
  (-90:6) coordinate (b4)
  (90:6) coordinate (b5)
  (0:6) coordinate (r1)
  (180:6) coordinate (r2)
  (90:1) coordinate (r3);

  \draw[bend left,line width=1pt,red] (b4) to (b3);
  \draw[bend right,line width=1pt, red] (b4) to (b1);

  \draw[red,line width=1pt,red] (b4) to[out=30,in=-90](3.5,0)to[out=90,in=0](0,4)to[out=180,in=90](-3.5,0) to[out=-90,in=150](b4);
  \draw[red,line width=1pt,red] (b4) to[out=10,in=-90](5.2,0)to[out=90,in=-10](b5);

  \draw[bend left,line width=1.2pt,blue,->] (-2.2,-4) to (2.2,-4);
  \draw[bend left,line width=1.2pt,blue,->] (1.4,-3) to (4,-3.4);

  \draw[thick, red ,fill=red]
  (b1) circle (0.2cm)
  (b3) circle (0.2cm)
  (b4) circle (0.2cm)
  (b5) circle (0.2cm);

  \draw[thick,black, fill=white]
  (r1) circle (0.2cm)
  (b2) circle (0.2cm)
  (r2) circle (0.2cm)
  (r3) circle (0.2cm);

  \draw(-1.8,-1) node[red,font=\scriptsize] {$\mathbf{x_{3}}$};
  \draw(2,-2) node[red,font=\scriptsize] {$\mathbf{x_{1}}$};
  \draw(0,3) node[red,font=\scriptsize] {$\mathbf{x_{2}}$};
  \draw(4.7,0) node[red,font=\scriptsize] {$\mathbf{x_{4}}$};

  \end{scope}

  \begin{scope}[shift={(8,0)}]
  \draw[line width=1.5pt,fill=white] (0,0) circle (6cm);

  \path
  (0:6) coordinate (b1)
  (-90:6) coordinate (b2)
  (180:6) coordinate (b3)
  (90:6) coordinate (b4)
  (2,0) coordinate (b6)
  (45:6) coordinate (r1)
  (135:6) coordinate (r2)
  (-135:6) coordinate (r3)
  (-45:6) coordinate (r4);

  \draw[line width=1pt,red] (b6) to (b1);
  \draw[line width=1pt,red] (b6) to (b2);
  \draw[line width=1pt,red] (b6) to (b4);
  \draw[line width=1pt,red] (b4) to (b3);

% \draw[red,line width=1.2pt,blue,->] (1.8,0.5) to[out=10,in=90](2.8,0)to[out=-90,in=0](1.8,-0.7)to[out=180,in=-150](1.8,0.9);
  \draw[blue,line width=1.2pt,->] (1.8,0.5) arc(100:0:0.6) arc(0:-275:0.8);
% \draw[red,line width=1.2pt,blue,->] (1.6,-1.45) to[out=140,in=180](1.3,1.7)to[out=0,in=90](3.3,0);
  \draw[blue,line width=1.2pt,->] (1.6,-1.45) arc(255:0:1.6);

  \draw[thick,red ,fill=red]
  (b1) circle (0.2cm)
  (b2) circle (0.2cm)
  (b3) circle (0.2cm)
  (b4) circle (0.2cm)
  (b6) circle (0.2cm);

  \draw[thick,black, fill=white]
  (r1) circle (0.2cm)
  (r2) circle (0.2cm)
  (r3) circle (0.2cm)
  (r4) circle (0.2cm);

  \draw(0.25,-3.5) node[red,font=\scriptsize] {$\mathbf{x_{1}}$};
  \draw(0.45,3) node[red,font=\scriptsize] {$\mathbf{x_{2}}$};
  \draw(4.8,0.5) node[red,font=\scriptsize] {$\mathbf{x_{3}}$};
  \draw(-2,3) node[red,font=\scriptsize] {$\mathbf{x_{4}}$};
  \end{scope}

  \end{tikzpicture}
  \end{center}
\caption{Non-homeomorphic surfaces corresponding to the same string algebra $\mathbf{k}Q/I_{1}$ in Example \ref{ex:string-gentle algebra}.}
\label{fig:non-homotopic}
\end{figure}
\end{example}

\subsection{Permissible arcs and good crossings}\label{subsec:permissible-good-crossings}

We now translate the module category of the string algebra $A = A_{\mathsf{P},\mathsf{L}}$ into the geometric setting of the labelled tiled surface $(\mathcal{S}, \mathsf{P}, \mathsf{L})$, following the approach established in \cite{BC24}.

Recall that the vertices of $Q(\mathcal{S}, \mathsf{P})$ correspond to the $\rpoint$-arcs in the dissection $\mathsf{P}$.
The indecomposable string modules are represented by certain $\circ$-arcs.
Band modules require certain closed curves together with the rank and automorphism parameters recalled above.
All curves in this subsection are transverse to $\mathsf{P}$ and meet it finitely many times in minimal position.
A curve with no crossing with $\mathsf{P}$ is called a \defines{zero arc} and represents the zero module.
Module arcs used to parametrize indecomposable modules are nonzero.

Suppose that a curve crosses $\mathbf{x}_i,\mathbf{x}_j\in\mathsf{P}$ consecutively.
It \defines{cuts an angle} if the segment between these crossings and the relevant incidences of $\mathbf{x}_i,\mathbf{x}_j$ bound the corresponding angle at their common red endpoint.

\begin{definition}[{\!\!\cite[Definition~4.3]{BC24}}]\label{def:permissible} \rm
A curve is \defines{permissible} if each pair of consecutive crossings with $\mathsf{P}$ cuts an angle and no segment follows all the successive angles of a label in $\mathsf{L}$.
A permissible $\circ$-arc or $\rpoint$-arc is a permissible curve with the respective endpoints of Definition~\ref{def:arc}.
A closed curve is permissible if the angle condition holds cyclically and no segment of any positive power follows a label.
\end{definition}

An oriented nonzero permissible arc crossing $\mathbf{x}_{i_0},\ldots,\mathbf{x}_{i_d}$ reads a string $w_\gamma$ of length $d$ from the intervening angles and their directions.
When $d=0$, it reads the signed trivial string at $i_0$, with sign determined by the orientation and the chosen sign functions as in \cite[Theorem~4.6]{BC24}.
In particular, this one-crossing arc represents the simple module at $i_0$, whereas a zero arc represents zero.
We set $M_\gamma=M(w_\gamma)$.

Two oriented nonzero permissible arcs are \defines{equivalent} if they have the same crossed-arc sequence and cut the same intervening oriented angles; for a one-crossing arc the signed trivial strings must agree.
The end segments do not otherwise enter this equivalence.
For unoriented permissible $\circ$-arcs corresponding to string modules, an arc is identified with its reversal, corresponding to the equivalence $w\sim w^{-1}$.
Thus one-crossing arcs through different dissection arcs are not equivalent.
Every such class has a unique permissible $\circ$-arc representative up to isotopy.

\begin{theorem}[{\!\!\cite[Theorem~4.6]{BC24}}]
\label{thm:geometric_modules}
  Let $A=A_{\mathsf{P},\mathsf{L}}$.
  The preceding conventions {provide} bijections between:
  \begin{enumerate}[label={\rm(\arabic*)}]
  \item equivalence classes of oriented nonzero permissible arcs and the strings of $A$; \item classes of permissible closed curves meeting $\mathsf{P}$ at least twice and positive powers of bands.
  \end{enumerate}
\end{theorem}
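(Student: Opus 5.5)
The plan is to construct the two maps explicitly and check that they are mutually inverse. The geometric input is that every tile of the dissection $\mathsf P$ is a disc containing exactly one $\bpoint$-point (Definition~\ref{def:dissection}). Hence a curve in minimal position with respect to $\mathsf P$ is determined up to homotopy by two pieces of data: its sequence of crossings with $\mathsf P$, and, for each segment between consecutive crossings, the tile it traverses together with the side on which it passes the marked points.

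\emph{From curves to strings.} Let $\gamma$ be an oriented nonzero permissible arc crossing $\mathbf{x}_{i_0},\ldots,\mathbf{x}_{i_d}$.
\begin{enumerate}
  \item By permissibility, each segment between $\mathbf{x}_{i_{k-1}}$ and $\mathbf{x}_{i_k}$ cuts an angle at a red point $p$. An angle cut by $\delta$ runs between consecutive incidences of the fan at $p$, so it is an arrow $a_k$ of $Q(\mathcal S,\mathsf P)$ joining $i_{k-1}$ and $i_k$.
  \item The direction in which $\gamma$ cuts this angle (clockwise or anticlockwise around $p$) decides whether the $k$-th letter is $a_k$ or $a_k^{-1}$. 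This defines $w_\gamma=l_1\cdots l_d$.
  \item The conditions on a string hold for the following reasons.
  \begin{itemize}
    \item Concatenation $t(l_k)=s(l_{k+1})$ is automatic.
    \item $l_{k+1}\neq l_k^{-1}$: otherwise $\gamma$ would cut the same angle back and forth, producing a bigon with $\mathbf{x}_{i_k}$. This contradicts minimal position.
    \item Now take two consecutive direct letters $ab$. They share the crossing of $\mathbf x_{i_k}$ and are traversed in the same rotational sense, so they are consecutive angles at a \emph{single} red point and are read along its fan. Consequently $ab$ is not an \ref{R1}-relation, since those are exactly compositions of angles at different points or across a loop-arc tile.
    \item A maximal direct subword of $w_\gamma$ follows successive angles of a fan. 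It lies in the ideal generated by \ref{R2} exactly when it contains a label, and permissibility forbids this.
  \end{itemize}
  Hence no subword of $w_\gamma^{\pm1}$ lies in $I_{\mathsf P,\mathsf L}$. Equivalent arcs give the same word by the definition of equivalence. For $d=0$ one takes the signed trivial string determined by the orientation and the sign functions $\sigma,\varepsilon$.
\end{enumerate}

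\emph{From strings to curves.} Conversely, given a string $w=l_1\cdots l_d$, build a curve segment by segment.
\begin{enumerate}
  \item Each letter $l_k$ is an angle at a red point $p_{l_k}$. Draw a segment inside the tile containing this angle, from a point of $\mathbf x_{i_{k-1}}$ to a point of $\mathbf x_{i_k}$, cutting the angle in the prescribed direction.
  \item Consecutive segments glue along the common arc $\mathbf x_{i_k}$.
  \item At the two ends, extend the curve inside the end tiles to the unique $\bpoint$-point of that tile. Which side of the end crossing we go to is fixed by the requirement that the extended curve not cut a further angle, i.e.\ that the string not be prolonged. For trivial strings, the sign chooses the side.
  \item Reversing the steps above shows the resulting arc is permissible: a relation of type \ref{R1} would be needed to make two consecutive cut angles lie at different points, and a followed label would give a subword in $I$.
\end{enumerate}
Both composites are the identity. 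Because tiles are discs, any two curves with the same crossing and angle data are isotopic, which gives uniqueness of the representative. Minimal position follows from $l_{k+1}\neq l_k^{-1}$, by the bigon criterion.

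\emph{Closed curves.} Run the same construction cyclically.
\begin{itemize}
  \item A permissible closed curve crossing $\mathsf P$ at least twice reads a cyclic word, defined up to rotation and inversion, all of whose powers are strings. This holds because permissibility is imposed on every positive power.
  \item Such a word is $b^m$ for a unique band $b$ and $m\geqslant1$.
  \item Conversely, a power of a band is closed up into a curve in the same way as above, and free homotopy classes correspond to cyclic words up to rotation and inversion.
\end{itemize}

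The main obstacle is the local analysis at punctures and loop arcs. There the fan is bi-infinite, an arc may have both ends at the same point, and a label may wind several times around the puncture. One must check carefully that ``consecutive angles at a single point along the fan'' matches precisely the nonzero direct paths modulo \ref{R1}, including the loop-arc clause in \ref{R1} and the self-folded case of Figure~\ref{fig:angle}. One must also check that distinct incidences of the same arc are kept separate when identifying labels with subwords. I would handle this case by case over the tile types of Figure~\ref{fig:tiles}, using the incidence conventions for fans fixed above.
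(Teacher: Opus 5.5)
The paper gives no proof of this statement; it is quoted from \cite[Theorem~4.6]{BC24}. Your argument is essentially the standard one behind that result and is correct in outline: strings of the underlying locally gentle tiling algebra are read off angle sequences (consecutive same-direction letters are forced to lie at a single incidence, which is exactly the absence of \ref{R1}-subwords), and permissibility with respect to $\mathsf L$ removes precisely the paths containing a relation of type \ref{R2}. When you write up the converse, note that the string axioms are what make each gluing along $\mathbf{x}_{i_k}$ transverse (same-direction letters lie at one incidence, while opposite-direction letters are the two distinct arrows at the two incidences of $\mathbf{x}_{i_k}$, hence on opposite sides), and for $d\geqslant1$ it is this transversality, not a non-prolongation requirement, that places the end tiles on the far sides of $\mathbf{x}_{i_0}$ and $\mathbf{x}_{i_d}$.
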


We recall the description of morphisms by string combinatorics, following \cite{BC24}.
Let $w$ be a string.
A \defines{factor string decomposition} of $w$ is a decomposition of the form $w=w_\ell w_e w_r$, such that either $w_\ell$ is a trivial string or the last letter of $w_\ell$ lies in $Q_1^{-1}$ and either $w_r$ is trivial or the first letter of $w_r$ lies in $Q_1$.
In this case, $w_e$ is called a \defines{factor substring} of $w$.
Similarly, a decomposition of $w$ of the form $w=w_\ell w_e w_r$, where either $w_\ell$ is a trivial string or the last letter of $w_\ell$ lies in $Q_1$ and either $w_r$ is trivial or the first letter of $w_r$ lies in $Q_1^{-1}$, is called a \defines{substring decomposition}.
In this case, $w_e$ is called an \defines{image substring} of $w$.

The set of equivalence classes of factor string (resp. substring) decompositions of $w$ is denoted by $\Fac (w)$ (resp. $\Sub(w)$).
Let $v, w$ be two strings.
We call a pair $(v_\ell v_e v_r, w_\ell w_e w_r)$ in $\Fac (v) \times \Sub (w)$ an \defines{admissible pair} if $w_e = v_e$ or $w_e = v_e^{-1}$.
We will also sometimes call the string $w_e$ an \defines{overlap between $w$ and $v$}.

Homomorphisms between string modules arise from admissible pairs as has been shown by Crawley-Boevey:

\begin{theorem}[\!\!\cite{CB89}]\label{thm:homs}
For strings $v,w$, the maps obtained by projecting $M(v)$ onto a factor substring, identifying the overlap (reversing its basis order if necessary), and including it as an image substring of $M(w)$ form a $\mathbf{k}$-basis of $\Hom_A(M(v),M(w))$.
The basis is indexed by the admissible pairs in $\Fac(v)\times\Sub(w)$.
\end{theorem}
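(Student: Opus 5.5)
The plan is to prove the three standard parts separately: the graph maps are homomorphisms, they are linearly independent, and they span $\Hom_A(M(v),M(w))$. Write $v=l_1\cdots l_n$ and $w=m_1\cdots m_k$, and let $x_0,\dots,x_n$ and $z_0,\dots,z_k$ be the canonical bases of $M(v)$ and $M(w)$, one vector for each vertex occurrence.

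\emph{Graph maps are homomorphisms.} Fix an admissible pair $(v_\ell v_e v_r,\,w_\ell w_e w_r)$, and let $f$ be the map sending the basis vectors of the $v_e$-part onto the matching basis vectors of the $w_e$-part (in reversed order if $w_e=v_e^{-1}$) and all other $x_i$ to $0$.
\begin{enumerate}
\item The factor condition says the letters of $v$ adjacent to $v_e$ are arrows pointing away from $v_e$. Hence the span of the basis vectors outside $v_e$ is a submodule, and $M(v)\to M(v_e)$ is a module epimorphism.
\item Dually, the substring condition says the letters of $w$ adjacent to $w_e$ point into $w_e$. Hence $M(w_e)\hookrightarrow M(w)$ is a monomorphism.
\item Since $M(v_e)\cong M(v_e^{-1})$ via the basis reversal, $f$ is a composite of module maps.
\end{enumerate}

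\emph{Linear independence.} Each graph map has a $0/1$ matrix whose support is a ``diagonal strip'' $\{(j,i)\}$, determined by the starting positions and the orientation of the overlap. Distinct admissible pairs give distinct strips. I would order the pairs by (length of the overlap, starting position in $v$, starting position in $w$, orientation). Then in any nontrivial linear relation, the maximal pair occurring has an entry not in the support of any other map in the relation, which is a contradiction.

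\emph{Spanning, the main obstacle.} Let $f\neq0$ and write $f(x_i)=\sum_j f_{ji}z_j$. Choose a nonzero entry $f_{ji}$ that is extremal for a suitable order, e.g. with $|j|$-position in $w$ lexicographically first among those maximizing overlap length.
\begin{enumerate}
\item Compare the actions of the arrows at $x_i$ and $z_j$. If $v$ has a direct letter $a$ at $x_i$, say $x_i a = x_{i+1}$, then $f(x_i)a=f(x_{i+1})$.
\item By condition~\ref{S2} and the string conditions, at most one basis vector $z_{j\pm1}$ receives $z_j a$. This forces $w$ to continue at $z_j$ with the same letter, and $f_{j\pm1,i+1}=f_{ji}$.
\item Iterating in both directions walks the overlap simultaneously along $v$ and $w$, until the common walk must stop. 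The stopping rules are exactly the factor condition on $v$ and the substring condition on $w$. Otherwise the arrow relation at the endpoint yields a nonzero coefficient outside the walk, contradicting either the extremality of the chosen entry or $f$ being a module map.
\item This produces an admissible pair whose graph map $g$ satisfies $f_{ji}=g_{ji}$. Subtracting $f_{ji}g$ from $f$ kills this entry without creating larger entries in the chosen order.
\end{enumerate}
Induction on the finite set of nonzero entries then finishes the proof. The delicate point is choosing the ordering so that the subtraction strictly decreases it. This also requires handling the case where the walk revisits the same vertex, or where $v=w$ with overlapping strips, since there the boundary arguments must use the distinctness of vertex occurrences rather than of vertices.
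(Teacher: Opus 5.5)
The paper gives no proof of this statement; it is quoted from \cite{CB89}, so your argument has to stand on its own. Your first part, showing that graph maps are homomorphisms, is fine. The gap is in the spanning step, whose key local claim is false. An arrow leaving $x_i$ in $v$ does \emph{not} force $w$ to continue at $z_j$. Take $v=a$ with $a\colon 1\to 2$ and $w=e_1$. The projection $f\colon M(a)\to S_1$ has $f_{00}\neq 0$ and $x_0a=x_1$, yet $z_0a=0$, and this is exactly a legitimate end of an overlap.

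The forcing runs the other way, and you need two rules:
\begin{enumerate}
\item Suppose $f_{ji}\neq0$ and $b$ is an arrow with $z_jb=z_{j'}\neq0$. By reducedness of $w$ (not \ref{S2}), $z_j$ is the only basis vector sent to $z_{j'}$ by $b$. Comparing $z_{j'}$-coefficients in $f(x_i)b=f(x_ib)$ then gives $x_ib=x_{i'}\neq0$ and $f_{j'i'}=f_{ji}$.
\item Suppose $f_{ji}\neq0$ and $c$ is an arrow with $x_{i'}c=x_i$. Comparing $z_j$-coefficients in $f(x_i)=f(x_{i'})c$ gives a unique $j''$ with $z_{j''}c=z_j$ and $f_{j''i'}=f_{ji}$.
\end{enumerate}
So arrows leaving $z_j$ and arrows entering $x_i$ must be matched, and they carry the coefficient along. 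An arrow leaving $x_i$ but not $z_j$, or entering $z_j$ but not $x_i$, is where the common walk may stop. This is what makes the stopping rule coincide with the factor condition on $v$ and the substring condition on $w$. Your step~3 asserts this conclusion but, with the reversed forcing rule, has no mechanism to prove it. The ``extremal entry'' you introduce plays no role.

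With these two rules the rest is clean, and no ordering or induction is needed. Each position pair has at most one move towards $i+1$ and at most one towards $i-1$, with a uniquely determined partner in $w$. Reducedness of $v$ prevents $j$ from reversing direction. Hence the entries reachable from a nonzero $f_{ji}$ form a strip with the following properties:
\begin{enumerate}
\item its words satisfy $v_e=w_e$ or $v_e=w_e^{-1}$;
\item its ends satisfy the factor and substring conditions;
\item $f$ is constant on it;
\item it depends only on $v,w,(j,i)$.
\end{enumerate}
So it is the whole support of a single graph map $g_S$, and $f=\sum_S f_S\,g_S$.

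The same analysis repairs your independence step, whose justification is missing. Distinctness of strips alone does not give an entry of the maximal strip lying outside all the others. A priori, shorter strips on the same diagonal, or anti-diagonal strips crossing it, could cover it. What actually holds is that distinct graph maps have disjoint supports:
\begin{enumerate}
\item A strip ending strictly inside another strip on the same diagonal or anti-diagonal would force the letter of $v$ just beyond its end to be both direct (factor condition) and inverse (substring condition, read through the longer overlap).
\item A diagonal strip meeting an anti-diagonal one would force $l_{i+1}=l_i^{-1}$ or $m_{j+1}=m_j^{-1}$.
\end{enumerate}
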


We recall a combinatorial description of morphisms between string modules over a labelled tiling algebra, using permissible $\circ$-arcs.
Such a description has been given in \cite{C26} in the case when the algebra is gentle, and the version for string algebras is provided in \cite{BC24}.

\begin{definition}\rm
  Let $\gamma$ and $\beta$ be two permissible $\circ$-arcs, and suppose there is a crossing $c$ between $\gamma$ and $\beta$ in the interior of $\mathcal{S}$.
  Then there must be an arc $\mathbf{x}$ in $\mathsf{P}$ crossed by both arcs.
  The arcs $\mathbf{x}, \gamma$ and $\beta$ enclose a simply connected triangle $\delta$ with $c$ as one of its vertices.
  We say the crossing $c$ is a \defines{crossing from $\beta$ to $\gamma$} if $\gamma$ follows $\beta$ in the clockwise order around $\delta$, see Figure~\ref{fig:crossing}.
\end{definition}

\begin{figure}[ht]
\centering
\begin{tikzpicture}[scale=1]
\fill[yellow!45] (0.42,-0.57) to(0.96,-1.29) to (0.07,-0.93) -- cycle;

\draw[line width=1pt,red] (0,0)--(-1.5,-2) ;
\draw[line width=1pt,red] (0,0) -- (1.5,-2) ;

\draw[black, line width=1pt] (-0.8,-1.8) -- (1,0)node[above] {$\gamma$} ;
\draw[black, line width=1pt] (-1,-0.5) -- (1.5,-1.5) node[above] {$\beta$};

\draw(0.1,-1.2) node[green] {$c$};
\draw(0.45,-0.88) node[black] {\scriptsize$\delta$};
\fill[red] (0,0) circle (2pt) ;
\fill[white] (0,-2) circle (2pt) ;
\draw[thick,black, fill=white] (-0.8,-1.8) circle (0.075cm);
\end{tikzpicture}
\hspace{0.5cm}
\begin{tikzpicture}[scale=1]
\fill[yellow!45] (-0.42,-0.57) to(-0.96,-1.29) to (-0.07,-0.93) -- cycle;

\draw[line width=1pt,red] (0,0)--(1.5,-2) ;
\draw[line width=1pt,red] (0,0) -- (-1.5,-2) ;

\draw[black, line width=1pt] (0.8,-1.8) -- (-1,0)node[above] {$\beta$} ;
\draw[black, line width=1pt] (1,-0.5) -- (-1.5,-1.5) node[above] {$\gamma$};

\draw(-0.1,-1.2) node[green] {$c$};
\draw(-0.45,-0.88) node[black] {\scriptsize$\delta$};
\fill[red] (0,0) circle (2pt) ;
\fill[white] (0,-2) circle (2pt) ;
\draw[thick,black, fill=white] (0.8,-1.8) circle (0.075cm);
\end{tikzpicture}
\hspace{0.5cm}
\begin{tikzpicture}[scale=1]
\fill[yellow!45] (-0.9675,-1.29) to(-0.3455,-0.4607) to (0,-0.6789) -- cycle;
\fill[yellow!45] (0.9675,-1.29) to(0.3455,-0.4607) to (0,-0.6789) -- cycle;

\draw[line width=1pt,red] (0,0)--(-1.5,-2) ;
\draw[line width=1pt,red] (0,0) -- (1.5,-2) ;
\draw[black, line width=1pt] (-1.3,-1.5) -- (0.6,-0.3)node[above] {$\gamma$} ;
\draw[black, line width=1pt] (-0.6,-0.3) -- (1.3,-1.5) node[above] {$\beta$};

\draw(-0.32,-0.7) node[black] {\scriptsize$\delta$};
\draw(0.32,-0.7) node[black] {\scriptsize$\delta$};
\fill[red] (0,0) circle (2pt) ;
\fill[white] (0,-2) circle (2pt) ;

\end{tikzpicture}
\hspace{0.5cm}
\begin{tikzpicture}[scale=1]
\fill[yellow!45] (0.42,-0.57) to(0.96,-1.29) to (0.07,-0.93) -- cycle;

\draw[line width=1pt,red] (-1,-1.5) -- (1.5,-2) ;
\draw[line width=1pt,red] (0,0)--(-1,-1) ;
\draw[line width=1pt,red] (0,0) -- (1.5,-2) ;

\draw[black, line width=1pt] (-0.8,-1.8) -- (1,0)node[above] {$\gamma$} ;
\draw[black, line width=1pt] (-1,-0.5) -- (1.5,-1.5) node[above] {$\beta$};

\draw(0.45,-0.88) node[black] {\scriptsize$\delta$};
\fill[red] (0,0) circle (2pt) ;
\fill[red] (1.5,-2) circle (2pt) ;
\fill[white] (0,-2) circle (2pt);
\draw[thick,black, fill=white] (-0.8,-1.8) circle (0.075cm);
\end{tikzpicture}

\caption{A crossing between $\circ$-arcs, from $\beta$ to $\gamma$.}
\label{fig:crossing}
\end{figure}

A given crossing is incident with at most two opposite triangles, both of which {provide} the same orientation.
This orientation is independent of the choice of representatives in their isotopy classes.

\begin{definition}\rm
 Let $\gamma$ and $\beta$ be two permissible $\circ$-arcs.
\begin{enumerate}[label=(\arabic*)]
  \item We say there is an \defines{intersection from $\beta$ to $\gamma$} if there are end-segments $\beta_0$ and $\gamma_0$ of $\beta$ and $\gamma$, respectively, sharing an endpoint $x \in \mathcal{M}_{\circ}\cup\mathcal{P}_{\circ}$, such that $\gamma_0$ follows $\beta_0$ in the anticlockwise direction around $x$.
  \item A \defines{trivial intersection from $\beta$ to $\beta$} is an intersection in which the two end-segments coincide.
  \item An intersection from $\beta$ to $\gamma$ is \defines{traversed by $\mathsf{P}$} if there is an arc $\tau$ in $\mathsf{P}$ which crosses both $\beta$ and $\gamma$ and such that $\beta$, $\gamma$, and $\tau$ enclose a simply connected triangle, see Figure~\ref{fig:intersection}.
\end{enumerate}
\end{definition}

\begin{figure}[htbp]
\centering
\begin{tikzpicture}[scale=1]
\draw[line width=1pt] (0,0)--(3,1)node[above] {$\gamma$} ;
\draw[line width=1pt] (0,0) -- (3,-1) node[above] {$\beta$};
\draw[red,line width=1pt] (1.5,1) -- (1.5,-1) node[left] {$\tau$};

\draw[thick,black, fill=white] (0,0) circle (0.075cm);
\draw[thick,red, fill=red] (1.5,-1) circle (0.075cm);
\draw[thick,red, fill=red] (1.5,1) circle (0.075cm);
\end{tikzpicture}
\hspace{1cm}
\begin{tikzpicture}[scale=1]
\draw[line width=1pt] (0,0)--(3,1)node[above] {$\gamma$} ;
\draw[line width=1pt] (0,0) -- (3,-1) node[above] {$\beta$};
\draw[red,bend left,line width=1pt] (2.5,0) -- (1.5,-1) node[left] {$\tau$};

\draw[thick,black, fill=white] (0,0) circle (0.075cm);
\draw[thick,red, fill=red] (1.5,-1) circle (0.075cm);
\draw[thick,red, fill=red] (2.5,0) circle (0.075cm);
\end{tikzpicture}
\hspace{1cm}
\begin{tikzpicture}[scale=1]
\draw[line width=1pt] (0,0)--(3,1)node[above] {$\gamma$} ;
\draw[line width=1pt] (0,0) -- (3,-1) node[above] {$\beta$};
\draw(-1.2,0) node[red] {$\tau$};

\draw[line width=1pt,red] (1.5,0) to[out=130, in=0] (0,0.8) to[out=180,in=90](-1,0)to[out=-90,in=180](0,-0.8) to[out=0,in=-130](1.5,0) ;

\draw[thick,black, fill=white] (0,0) circle (0.075cm);
\draw[thick,red, fill=red] (1.5,0) circle (0.075cm);
\end{tikzpicture}

\caption{Left: Intersection from $\beta$ to $\gamma$ traversed by $\mathsf{P}$.
Middle: Intersection from $\beta$ to $\gamma$ not traversed by $\mathsf{P}$.
Right: Intersection from $\beta$ to $\gamma$ not traversed by $\mathsf{P}$ and intersection from $\gamma$ to $\beta$ traversed by $\mathsf{P}$.}
\label{fig:intersection}
\end{figure}

Denote by $\mathsf{C}(\beta,\gamma)$ the number of crossings from $\beta$ to $\gamma$, and by $\mathsf{I}_{\circ,\mathsf{P}}(\beta,\gamma)$ the number of intersections from $\beta$ to $\gamma$ traversed by $\mathsf{P}$.
Set $\delta_{\beta,\gamma}=1$ if $\beta$ and $\gamma$ are isotopic and $\delta_{\beta,\gamma}=0$ otherwise.

\begin{theorem}[{\!\!\cite[Theorem~4.11]{BC24}}]\label{thm:geometric-hom}
Let $\beta$ and $\gamma$ be permissible $\circ$-arcs, and let $M_\beta$ and $M_\gamma$ be the corresponding string modules.
Then
\[
\dim_{\mathbf{k}} \Hom_A(M_\beta, M_\gamma) = \mathsf{C}(\beta,\gamma) + \mathsf{I}_{\circ,\mathsf{P}}(\beta,\gamma) - \delta_{\beta,\gamma}.
\]
\end{theorem}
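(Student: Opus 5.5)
The plan is to reduce the geometric count to the Crawley-Boevey basis of Theorem~\ref{thm:homs}. We would construct an explicit map $\Phi$ from the set of admissible pairs in $\Fac(w_\beta)\times\Sub(w_\gamma)$ to the set of crossings from $\beta$ to $\gamma$ together with the intersections from $\beta$ to $\gamma$ traversed by $\mathsf{P}$. We would then show that $\Phi$ is a bijection, except in one case: when $\beta$ and $\gamma$ are isotopic, $\Phi$ is two-to-one on one specific pair. Since Theorem~\ref{thm:homs} gives $\dim_{\mathbf{k}}\Hom_A(M_\beta,M_\gamma)$ as the number of admissible pairs, the formula follows.

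\textbf{Step 1 (overlaps as common runs).} By Theorem~\ref{thm:geometric_modules}, the string $w_\beta$ is read off from the sequence of dissection arcs crossed by $\beta$ and the angles it cuts, and similarly for $w_\gamma$. An overlap $w_e$ (up to inversion) is therefore a maximal-type common run: consecutive crossings $\mathbf{x}_{i_0},\dots,\mathbf{x}_{i_d}$ of $\mathsf{P}$, together with the angles between them, shared by $\beta$ and $\gamma$. Along such a run we isotope the two curves so that they travel parallel to each other. We then examine what happens at each end of the run.
\begin{itemize}
\item If both curves continue beyond the run, they leave the common arc $\mathbf{x}_{i_0}$ (or $\mathbf{x}_{i_d}$) through different angles at the two red endpoints of that arc.
\item The factor-string condition on $w_\beta$ says the last letter of $w_\ell$ is inverse and the first letter of $w_r$ is direct. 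The substring condition on $w_\gamma$ is the opposite.
\end{itemize}
Using the sign functions $\sigma,\varepsilon$ of \cite[Definition~1.5]{BC24}, we would check that these conditions say precisely which curve turns to which side at each end. For a genuine admissible pair, the turning data at the two ends are arranged so that the parallel strands must exchange sides an odd number of times. After minimal position, this leaves exactly one crossing, and the clockwise-triangle criterion shows it is a crossing from $\beta$ to $\gamma$. When the run reaches the end of one string, that curve ends in the tile containing the $\circ$-point. If the other curve also ends there, we get an intersection at the shared $\circ$-point, and it is traversed by $\mathbf{x}_{i_0}$ (respectively $\mathbf{x}_{i_d}$). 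If the other curve continues, the side condition again produces a crossing, now located in the final tile.

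\textbf{Step 2 (the inverse map).} Conversely, given a crossing from $\beta$ to $\gamma$, the definition provides a dissection arc $\mathbf{x}$ and a triangle $\delta$. From there we follow $\beta$ and $\gamma$ in both directions for as long as they cross the same arcs of $\mathsf{P}$ through the same angles. This produces a common run, hence a candidate overlap $w_e$. The orientation of $\delta$ forces the divergence directions required for $(\Fac,\Sub)$. A traversed intersection is handled the same way, with the run starting at the shared end tile. Trivial strings $w_e=e_v$ correspond to runs consisting of a single crossed arc; here the signed-trivial-string convention has to be matched with the side on which the curves leave. Permissibility with respect to the labels $\mathsf{L}$ plays no role in the local analysis, since both curves are already permissible and every subword of a string is a string. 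Minimal position guarantees that distinct crossings give distinct runs, and this is what makes $\Phi$ injective.

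\textbf{Step 3 (the $\delta_{\beta,\gamma}$ correction).} The exceptional case is $\beta\simeq\gamma$ with the full overlap $w_e=w_\beta$, which is the identity morphism. Here the run is the whole curve, and it produces two trivial intersections, one at each $\circ$-endpoint. Each of them is traversed by the first or last arc of $\mathsf{P}$ crossed. The map $\Phi$ is thus two-to-one on this single pair and bijective elsewhere, which accounts for the term $-\delta_{\beta,\gamma}$.

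I expect the main obstacle to be Step 1, namely the careful case analysis at the ends of the run. This includes runs ending at a puncture with self-folded angles and the case where a curve crosses the same dissection arc several times, so that different occurrences must be distinguished. It also requires checking that the sign conventions produce the clockwise orientation uniformly. I would first settle the disc case with distinct arcs, where everything is planar, and then argue in the universal cover, or in a locally gentle cover, that the same local picture holds in general.
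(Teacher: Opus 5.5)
Your plan is the argument the paper relies on: it cites \cite[Theorem~4.11]{BC24} and explains after the statement that the graph maps of Theorem~\ref{thm:homs} are indexed by oriented crossings and traversed intersections, with the identity represented by both trivial endpoint intersections on the diagonal, which is what your Steps~1--3 carry out. Two bookkeeping points: the single crossing attached to a run comes from comparing the sides forced at its two ends (an end where only one curve stops at the $\circ$-point also forces a side), so it must not be produced separately at each end, and it is the passage from geometric objects to admissible pairs, not $\Phi$, that is two-to-one at the identity.
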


For distinct permissible $\circ$-arcs corresponding to string modules, the graph maps are indexed by their oriented crossings and intersections with $\mathsf P$.
On the diagonal there are two trivial endpoint intersections, both representing the same identity map.
We retain this identity only once; this accounts for the term $-\delta_{\beta,\gamma}$ in the formula.
We call the resulting basis the \defines{geometric graph-map basis}.
Its maps are defined by the admissible overlaps in Theorem~\ref{thm:homs}, not by the dimension formula alone.

The $\circ$-arc representative is convenient for morphisms.
For $\tau$-rigidity we use the support arcs of \cite[Definitions~4.2 and~4.14]{BC24}.
Their endpoints may be red marked points, or boundary white points in a tile having a single dissection side, as in type~$(i.b)$ of Figure~\ref{fig:tiles}.
Fix the interior segments determined by the string.
In each end tile choose an admissible end segment as far clockwise as its equivalence class permits: adjoining the incident tile side in that direction must change the class.
Apply this rule separately at the two endpoints using their local clockwise orders.
This selects the clockwise-most representative up to isotopy, denoted by $\gamma_M^{\circlearrowright}$ for a string module $M$. Figure~\ref{fig:equivalent-arc-representatives}
illustrates a permissible $\circ$-arc $\gamma_M^\circ$
and its equivalent clockwise-most representative
$\gamma_M^{\circlearrowright}$.
The two arcs cross the same dissection arcs in the same order
and cut the same intervening angles, hence represent the same
string module $M$.

\begin{figure}[htbp]
	\centering
	\begin{tikzpicture}[
		xscale=1.10,
		yscale=1.04,
		>=stealth,
		dissection/.style={red,thick},
		whitepoint/.style={
			circle,draw=black,fill=white,
			inner sep=0pt,minimum size=4pt
		}
		]
		
		%------------------------------------------------
		% 1. ?????
		%------------------------------------------------
		\coordinate (A) at (0,0.85);
		\coordinate (B) at (0.45,2.05);
		\coordinate (C) at (2,2.05);
		\coordinate (D) at (2,-0.45);
		\coordinate (E) at (0.4,-0.3);
		
		\coordinate (F) at (3.2,-0.45);
		\coordinate (G) at (4.3,2.05);
		\coordinate (H) at (5.4,-0.45);
		\coordinate (U) at (6.35,2.25);
		\coordinate (V) at (7.45,-0.45);
		
		% J?K ???????????????
		\coordinate (I) at (7.9,1.9);
		\coordinate (J) at (9.5,1.7);
		\coordinate (K) at (9.5,-0.1);
		
		% ????????
		\coordinate (Wl) at (0.95,0.95);
		\coordinate (Wr) at (9.5,0.7);
		
		%------------------------------------------------
		% 2. ????
		%------------------------------------------------
		\draw[dissection] (C)--(B)--(A)--(E)--(D);
		
		% ????????????????
		\draw[dissection] (I)--(J);
		\draw[gray!50,line width=4pt] (9.55,2)--(9.55,-0.5);
		
		\draw[black,very thick] (9.5,2)--(9.5,-0.5);
		
		\draw[dissection] (K)--(V);
		
		\draw[dissection] (C)--(D);
		\draw[dissection] (C)--(F);
		
		\draw[dissection]
		(F)
		.. controls (3.45,0.30) and (4.12,1.28) ..
		(G);
		
		\draw[dissection] (G)--(H);
		
		\draw[dissection]
		(H)
		.. controls (5.52,0.35) and (6.18,1.50) ..
		(U);
		
		\draw[dissection] (U)--(V);
		\draw[dissection] (V)--(I);

		%------------------------------------------------
		% 4. ??-arc???
		%------------------------------------------------
		\draw[blue,very thick]
		(Wl)
		.. controls (2.7,0.55) and (7.0,0.50) ..
		(Wr);
		
		\node[blue] at (4.2,0.3)
		{$\gamma_M^{\circ}$};
		
		\node[red] at (2.4,0.3)
		{$\cdots$};
		\node[red] at (3.2,0.3)
		{$\cdots$};
		\node[red] at (4.35,1.25)
		{$\cdots$};
		\node[red] at (6.4,1.25)
		{$\cdots$};
		\node[red] at (5.35,0.3)
		{$\cdots$};
		\node[red] at (7.3,1)
		{$\cdots$};
		%------------------------------------------------
		% 5. Clockwise-most ??-arc???
		%------------------------------------------------
		\draw[dark-green,very thick]
		(B)
		.. controls (2.0,0.65) and (7.8,1.15) ..
		(K);
		
		\node[dark-green] at (5.3,1.2)
		{$\gamma_M^{\circlearrowright}$};
		
		%------------------------------------------------
		% 6. ???
		%------------------------------------------------
		
		% ?????
		\foreach \v in {A,B,C,D,E,F,G,H,U,V,I,J,K}
		\fill[red] (\v) circle (2pt);
		
		% ?????
		\node[whitepoint] at (Wl) {};
		\node[whitepoint] at (Wr) {};
		
		% ??? tile/sector ????
		\foreach \w in {
			(2.25,-0.12),
			(3.15,1.72),
			(4.27,-0.12),
			(5.35,1.72),
			(6.32,-0.12),
			(7.43,1.65)
		}
		\node[whitepoint] at \w {};

	\end{tikzpicture}
	
	\caption{
		The permissible $\circ$-arc $\gamma_M^\circ$
		and its equivalent permissible clockwise-most
		$\rpoint$-arc $\gamma_M^{\circlearrowright}$.
	}
	\label{fig:equivalent-arc-representatives}
\end{figure}

Crossings of these representatives are oriented using a local triangle bounded by their segments and a dissection side.
One vertex of this triangle may be a red endpoint of an end segment.
A crossing is from $\beta$ to $\gamma$ when $\gamma$ follows $\beta$ clockwise around the triangle; this orientation is independent of the permitted triangle \cite[Lemma~5.4 and Definition~5.5]{BC24}.

\begin{definition}[{\!\!\cite[Definition~5.9]{BC24}}]\rm
\label{def:good-crossing}
A crossing from $\gamma_M^{\circlearrowright}$ to $\gamma_N^{\circlearrowright}$ is \defines{good} if its neighbourhood, including the terminal fan of $\gamma_N^{\circlearrowright}$ and the relevant label, is isotopic to the configuration in Figure~\ref{fig:good-crossing}.
Equivalently, it is the labelled terminal configuration of \cite[Figure~32]{BC24}, with our colour convention.
We write $X_{M,N}$ for the number of such oriented crossings.
\end{definition}

\begin{figure}[htbp]
\centering
\begin{tikzpicture}[scale=0.8,>=stealth]
\coordinate (p) at (0,0);
\coordinate (q) at (-0.85,2.8);
\draw[red,thick] (p)--(q) node[pos=.52,right,red] {$x_r$};
\draw[red,thick] (p)--(-2.9,.40) node[pos=.87,above,red] {$x_{r-1}$};
\draw[red,thick] (p)--(-2.9,-.42);
\draw[red,thick] (p)--(-2.55,-1.23);
\draw[red,thick] (p)--(-2.03,-1.78) node[pos=.93,below left,red] {$x_0$};
\node at (-1.7,-.64) {$\cdots$};
\draw[blue,very thick,->] (221:.70)
  arc[start angle=221,end angle=107,radius=.70];
\node[blue,right] at (.28,.43) {$a_1\cdots a_r$};
\draw[dark-green,thick] (-1.65,-2.12)
  .. controls (-1.69,.3) and (-1.68,1.96) .. (q);
\node[dark-green,left] at (-1.45,2.02) {$\gamma_N^{\circlearrowright}$};
\draw[black,thick] (-3,1.35)--(1.4,.96)
  node[right] {$\gamma_M^{\circlearrowright}$};
\node[below left] at (-1.51,1.22) {$c$};
\fill[red] (p) circle (2pt) node[below right,red] {$p$};
\fill[red] (q) circle (2pt) node[above,red] {$q$};
\end{tikzpicture}
\caption{A good crossing $c$ from $\gamma_M^{\circlearrowright}$ to $\gamma_N^{\circlearrowright}$.
The latter follows the labelled fan through $x_0,\ldots,x_{r-1}$ and ends at $q$ before crossing $x_r$.
The former crosses $x_r$.
The diagram specifies the local incidence pattern; the rest of the arcs is suppressed.}
\label{fig:good-crossing}
\end{figure}

In this configuration, after choosing the direction of the string,
\[
w_N=w_\ell a_1\cdots a_{r-1},\quad r\geqslant2,
\]
where $w_\ell$ is nonzero, the displayed suffix is direct, and $a_1\cdots a_r$ is a label.
The arc $\gamma_M^{\circlearrowright}$ crosses $x_r$, the dissection arc indexed by $t(a_r)$.
These word conditions accompany the specified local configuration; by themselves they do not specify which crossing is good.
Write $\mathsf C(\gamma_M^{\circlearrowright},\gamma_N^{\circlearrowright})$ for the total number of oriented crossings.

\begin{proposition}[{\cite[Proposition~5.10]{BC24}}]
\label{prop:Hom-tau-crossing}
For any string modules $M$ and $N$, one has
\[
\dim_{\mathbf{k}}\Hom_A(M,\tau N) =\mathsf{C}(\gamma_M^{\circlearrowright}, \gamma_N^{\circlearrowright})-X_{M,N}.
\]
\end{proposition}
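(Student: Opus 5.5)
The plan is to reduce the statement to the crossing formula of Theorem~\ref{thm:geometric-hom}, applied to $M$ and the Auslander--Reiten translate $\tau N$. This requires a geometric description of the arc of $\tau N$ in terms of $\gamma_N^{\circlearrowright}$.

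First, I would recall the Butler--Ringel description of $\tau$ on string modules. If $N=M(w_N)$ is non-projective, then $\tau N=M(w')$, where $w'$ is obtained from $w_N$ at each end by one of two operations: deleting a hook, or adding a cohook when deletion is impossible. Geometrically, on the $\circ$-arc representative this should amount to rotating each endpoint of $\gamma_N$ one step along the boundary (or around the puncture) to the next white marked point. The labels of $\mathsf L$ modify this only where the rotated end segment would follow all angles of a label. In that case the cohook is truncated, because the longest direct path allowed at that end stops at $a_1\cdots a_{r-1}$ instead of continuing through $a_r$. I would check this endpoint by endpoint using the clockwise-most rule: choosing end segments as far clockwise as the class allows is exactly what makes $\gamma_N^{\circlearrowright}$, pushed slightly off its red endpoints, isotopic to the $\circ$-arc of $\tau N$ in the unlabelled situation. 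The projective case must be handled separately. There $\tau N=0$, and I would verify that every crossing from $\gamma_M^{\circlearrowright}$ to $\gamma_N^{\circlearrowright}$ is of the good type of Figure~\ref{fig:good-crossing}, or that no such crossing exists.

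Second, with $\gamma_{\tau N}$ identified, Theorem~\ref{thm:geometric-hom} gives
\[
\dim_{\mathbf k}\Hom_A(M,\tau N)=\mathsf C(\gamma_M,\gamma_{\tau N})+\mathsf I_{\circ,\mathsf P}(\gamma_M,\gamma_{\tau N})-\delta_{\gamma_M,\gamma_{\tau N}}.
\]
I would then build a bijection from this index set to the set of non-good oriented crossings from $\gamma_M^{\circlearrowright}$ to $\gamma_N^{\circlearrowright}$. Under the small isotopy relating $\gamma_N^{\circlearrowright}$ and $\gamma_{\tau N}$, the correspondence works as follows:
\begin{itemize}
\item interior crossings persist;
\item traversed endpoint intersections at the white endpoints of $\gamma_{\tau N}$ become crossings near the red endpoints of $\gamma_N^{\circlearrowright}$;
\item the $\delta$-correction corresponds to the degenerate coincidence $\gamma_M=\gamma_{\tau N}$, matched with the shared-triangle orientation convention of \cite[Lemma~5.4]{BC24}.
\end{itemize}
Throughout, I would use the graph-map basis of Theorem~\ref{thm:homs} to confirm that each counted crossing really yields an admissible pair in $\Fac(w_M)\times\Sub(w')$.

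The main obstacle is the labelled end configuration. There $\gamma_N^{\circlearrowright}$ follows the fan $x_0,\ldots,x_{r-1}$ and ends at $q$, while $\gamma_M^{\circlearrowright}$ crosses $x_r$. I must show that such a crossing does not survive as a crossing or traversed intersection with $\gamma_{\tau N}$. The reason is that the would-be overlap ends in $a_1\cdots a_r$, which is a label relation and hence zero. I must also show that every other crossing near a labelled fan does survive. I would treat this by a local case analysis on the position of $\gamma_M^{\circlearrowright}$ relative to the fan at $p$. Using $w_N=w_\ell a_1\cdots a_{r-1}$, I would compare the candidate overlap with the factor/substring conditions, and check separately the cases where the label winds around a puncture or the arc $x_r$ coincides with some $x_i$.
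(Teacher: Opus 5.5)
The paper does not prove this statement; it quotes it from \cite[Proposition~5.10]{BC24}. Judged on its own, your plan has a genuine gap at its foundation: the claim that $\gamma_N^{\circlearrowright}$, pushed off its red endpoints, is isotopic to the $\circ$-arc of $\tau N$ is false, already in the unlabelled case.

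By construction $\gamma_N^{\circlearrowright}$ is equivalent to $\gamma_N^{\circ}$, so it crosses exactly the dissection arcs of $w_N$. Its red endpoints are never endpoints of the first or last crossed arc, since otherwise that crossing could be slid into the endpoint and the class would change. Moving the ends off the red points therefore only adds crossings near those points, whereas $w_{\tau N}$ need not contain all vertices of $w_N$.

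Take $A=\mathbf{k}(1\xrightarrow{a}2\xrightarrow{b}3)$. On its disc, $\mathbf{x}_1,\mathbf{x}_2,\mathbf{x}_3$ share a red endpoint $p$ and have further endpoints $q_1,q_2,q_3$.
\begin{itemize}
\item The arc $\gamma_{S_2}^{\circlearrowright}$ joins $q_1$ to $q_3$ and crosses only $\mathbf{x}_2$, which is incident to neither endpoint. Its perturbations give the arcs of $S_2$, $M(a)$, $M(b)$ or $e_1A$, never that of $\tau S_2=S_3$.
\item The single crossing accounting for $\Hom_A(S_3,\tau S_2)\cong\mathbf{k}$ lies in the tile between $\mathbf{x}_2$ and $\mathbf{x}_3$. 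Here $\gamma_{S_3}^{\circlearrowright}$ runs from $q_2$ across $\mathbf{x}_3$, so the two arcs cross no common dissection arc. The orienting triangle has the red endpoint $q_3$ as a vertex.
\end{itemize}
Such crossings are produced by the red ends of both arcs. Your bijection (interior crossings persist, endpoint intersections turn into crossings near the ends of $\gamma_N^{\circlearrowright}$) therefore has no isotopy to act along. It also never addresses the passage from $\gamma_M^{\circ}$, which Theorem~\ref{thm:geometric-hom} uses, to $\gamma_M^{\circlearrowright}$.

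What is missing is a direct combinatorial matching. You need to match the oriented crossings from $\gamma_M^{\circlearrowright}$ to $\gamma_N^{\circlearrowright}$ with admissible pairs in $\Fac(w_M)\times\Sub(w_{\tau N})$, organised by the tile containing the crossing and its triangle, with $w_{\tau N}$ computed from $w_N$. Alternatively, you can bypass $\tau N$ via the exact sequence $0\to\Hom_A(N,M)\to\Hom_A(P_0,M)\to\Hom_A(P_1,M)\to D\Hom_A(M,\tau N)\to0$, for a minimal projective presentation $P_1\to P_0\to N\to0$.

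Two algebraic inputs also need correcting.
\begin{itemize}
\item Your Butler--Ringel rule has the priority reversed. $\tau$ adds a cohook at an end whenever the string can be prolonged there by an arrow, and deletes a hook only otherwise. With your order, for $1\xrightarrow{a}2\xleftarrow{b}3\xrightarrow{c}4$ the module $M(ab^{-1})$ would go to $S_2$ instead of $\tau M(ab^{-1})=M(b^{-1}c)\cong e_3A$.
\item In the configuration of Figure~\ref{fig:good-crossing} the label does not truncate a cohook. The word $w_Na_r$ is not a string, and any other arrow leaving $t(a_{r-1})$ composes to zero with $a_{r-1}$ by \ref{R1}. So no cohook can be added at that end, and a hook is deleted instead. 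This switch is what the good crossings have to record.
\end{itemize}
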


A \defines{generalised permissible arc} is either an arc of $\mathsf{P}$ or a nonzero permissible clockwise-most $\rpoint$-arc.
A collection of generalised permissible arcs is \defines{good} if all its oriented crossings, including self-crossings, are good.
An arc of $\mathsf{P}$ cannot cross another arc in a good collection.

\section{Gorenstein-projective support \texorpdfstring{$\tau$}{tau}-tilting modules}

In the gentle case, the non-projective indecomposable Gorenstein-projective modules are the right ideals $\alpha A$ attached to arrows on full-relation oriented cycles \cite[Theorem~2.5]{K15}, and support $\tau$-tilting pairs are described by non-crossing generalised dissections.
For general string algebras, perfect paths may have length greater than one and good crossings may occur, so we identify $pA$ with a string module and replace generalised dissections by maximal good collections.

\subsection{Perfect paths and Gorenstein-projective arcs}

For a perfect path $p$, let $w_p=a_1\cdots a_{m(p)}$ be the unique maximal, possibly trivial, path starting at $t(p)$ such that $pw_p\notin I$.
Then $pw_pa\in I$ for every arrow $a$ which can be appended to $w_p$.
Existence follows from the finite-dimensionality of $A$, and uniqueness from condition~\ref{S2}.

\begin{lemma}\label{lem:pA-string}
For every perfect path $p$, there is an isomorphism of right $A$-modules
\[
pA\cong M(w_p).
\]
\end{lemma}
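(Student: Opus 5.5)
The plan is to compute a $\mathbf{k}$-basis of $pA$ directly and check that the arrows act on it exactly as on $M(w_p)$. Since $A$ is monomial, $pA$ is spanned by the classes of the paths $pu$ with $u$ a path starting at $t(p)$ and $pu\notin I$, and these classes are linearly independent. The right $A$-action sends the basis element $pu$ to $pua$ if $pua\notin I$, and to $0$ otherwise.

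The first step is to show that the admissible $u$ are exactly the prefixes of $w_p$. We argue by induction on the length of $u$: if $pu\notin I$ and $u=u'a$, then $pu'\notin I$, so by induction $u'=a_1\cdots a_j$. By \ref{S2} there is at most one arrow $b$ with $a_jb\notin I$ (and for $u'$ trivial we need a separate argument). So $a$ must equal $a_{j+1}$, or else $pu'a\in I$.

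The delicate point is the trivial case $u'=e_{t(p)}$, where two arrows may start at $t(p)$. Here we use perfectness: let $q$ be the path with $(p,q)$ a perfect pair, and let $q_1$ be its first arrow. Suppose $b\neq q_1$ starts at $t(p)$ with $pb\notin I$. If some other arrow $b'$ also had $pb'\notin I$, then every extension $q'$ with $pq'\in I$ would have to begin with one of these arrows, and \ref{P2} forces $q$ to be a prefix of every such $q'$. So at most one arrow $b\neq q_1$ can satisfy $pb\notin I$.

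Next we must rule out that $q_1$ itself yields a second branch. Assume $pq_1\notin I$ and some arrow $b\neq q_1$ also has $pb\notin I$. Since $A$ is finite-dimensional, some extension $bv$ has $pbv\in I$ with $bv\notin I$. Then \ref{P2} says $q$ is a prefix of $bv$, which contradicts $q_1\neq b$. Hence at most one arrow $c$ with $pc\notin I$ starts at $t(p)$. If that arrow is $q_1$ and no other arrow qualifies, this is consistent.

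Granting this, the nonzero paths $pu$ form a single chain $p,\,pa_1,\,pa_1a_2,\dots,pw_p$, which matches the definition of $w_p$ as the unique maximal path. Note that $w_p$ could also be computed from the first arrow of $q$ versus the other arrow, but uniqueness is all we need. The action of each arrow $a$ on $pa_1\cdots a_j$ is $pa_1\cdots a_{j+1}$ if $a=a_{j+1}$, and zero otherwise.

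This is precisely the string module $M(w_p)$ for the direct string $w_p$, and it is uniserial. Concretely, the basis vector at the $j$-th vertex occurrence maps to the next one under $a_{j+1}$, and all other arrows act by zero. Sending $pa_1\cdots a_j$ to the $j$-th basis vector of $M(w_p)$ then gives the isomorphism of right modules. We also check that $w_p$ is a string: it is a nonzero direct path, since $pw_p\notin I$ implies $w_p\notin I$.

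The main obstacle is the branching argument at the vertex $t(p)$, where \ref{S2} gives no control because $p$ is not a single arrow. It must instead be handled through \ref{P2}, together with finite-dimensionality, as sketched above.
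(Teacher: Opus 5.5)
Your strategy is the paper's: the nonzero paths $pu$ form a basis of $pA$, they form the single chain $p,pa_1,\ldots,pa_1\cdots a_m$, and the arrows act on it as on $M(w_p)$. Your inductive step is also correct. The gap is the base case at $t(p)$, which you treat as the main obstacle. The argument you give there does not work.

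In your second paragraph you assume $pq_1\notin I$ and $pb\notin I$ with $b\neq q_1$. You then infer from finite-dimensionality that some $v$ satisfies $bv\notin I$ and $pbv\in I$. That inference is invalid. Finite-dimensionality only gives a longest $v$ with $pbv\notin I$. It may happen that no arrow can be appended to $bv$ at all (for instance, nothing in your argument excludes $t(b)$ being a sink). It may also happen that every one-arrow extension of $bv$ already lies in $I$ for reasons independent of $p$. In either case no path $q'$ beginning with $b$ satisfies the hypotheses of \ref{P2}, so you get no contradiction with $q_1\neq b$. Your first paragraph, about a further arrow $b'$, is also unjustified as written, although that case is settled at once by \ref{S1}.

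The premise of this detour is also wrong: \ref{S2} does control the branching at $t(p)$. A perfect path $p$ is nontrivial, so it ends with an arrow $\ell$. Since $I$ is an ideal, $pb\notin I$ implies $\ell b\notin I$. Hence \ref{S2} for $\ell$ allows at most one arrow $b$ with $pb\notin I$. This is your inductive step with $a_j$ replaced by $\ell$, and it is what the paper means by deriving the uniqueness of $w_p$ from \ref{S2}. Perfectness, and \ref{P2} in particular, plays no role: the lemma holds for every nontrivial path $p\notin I$. The genuine two-branch phenomenon occurs only for trivial paths $e_i$, which are not perfect. With this one-line base case your proof is complete.
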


\begin{proof}
Write $w_p=a_1\cdots a_m$.
{As} $A$ is monomial, $pA$ has a basis consisting of the nonzero paths beginning with $p$.
By condition~\ref{S2} and the maximality of $w_p$, these are exactly $p,\ pa_1,\ pa_1a_2,\ \ldots,\ pa_1\cdots a_m.$ The action of the arrows on this basis is precisely that defining the string module $M(w_p)$.
{Therefore} $pA\cong M(w_p)$.
\end{proof}
For each $i\in Q_0$, choose a string $w_i^{\mathrm{pr}}$ with $M(w_i^{\mathrm{pr}})\cong e_iA$, and write $\gamma_w^\circ$ for the corresponding permissible $\circ$-arc.
Set
\begin{equation}\label{eq:GP-circ-arcs}
\mathsf{GP}_{\circ}(\mathcal{S})
=
\{\gamma_{w_i^{\mathrm{pr}}}^{\circ}\mid i\in Q_0\}
\sqcup
\{\gamma_{w_p}^{\circ}\mid p\in\perpath(A)\}.
\end{equation}

\begin{corollary}\label{prop:GP-arcs}
The assignment $\gamma\mapsto M_\gamma$ induces a bijection
\[
\mathsf{GP}_{\circ}(\mathcal{S}) \longrightarrow \operatorname{ind}\bigl(\mathsf{Gproj}(A)\bigr).
\]
\end{corollary}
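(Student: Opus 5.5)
The bijection is obtained by composing three earlier bijections: the one on arcs from Theorem~\ref{thm:geometric_modules}, the module identification $pA\cong M(w_p)$ of Lemma~\ref{lem:pA-string}, and the classification of Theorem~\ref{thm:perfect-path-GP}. The work is to check that the map on $\mathsf{GP}_{\circ}(\mathcal{S})$ is well defined, lands in $\operatorname{ind}(\mathsf{Gproj}(A))$, and is both injective and surjective.

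\emph{Well-definedness.} For each $i\in Q_0$ the string $w_i^{\mathrm{pr}}$ exists, because $e_iA$ is an indecomposable string module over a string algebra (uniserial or biserial). By the definition of $\gamma_{w_i^{\mathrm{pr}}}^{\circ}$ we have $M_{\gamma}=M(w_i^{\mathrm{pr}})\cong e_iA$. For $p\in\perpath(A)$, Lemma~\ref{lem:pA-string} gives $M_{\gamma_{w_p}^{\circ}}=M(w_p)\cong pA$. Both are indecomposable Gorenstein-projective modules by Theorem~\ref{thm:perfect-path-GP}.

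\emph{Surjectivity.} This is immediate from the displayed decomposition $\operatorname{ind}(\mathsf{Gproj}(A))=\{e_iA\}\sqcup\{pA\}$ in Theorem~\ref{thm:perfect-path-GP}.

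\emph{Injectivity.} This is the main point, and we check it in three steps.
\begin{enumerate}
\item We first confirm that $\mathsf{GP}_{\circ}(\mathcal{S})$ is an honest set of distinct unoriented arcs. Suppose two listed arcs give isomorphic modules. Their strings then agree up to inversion, since $M(w)\cong M(v)$ iff $v\in\{w,w^{-1}\}$ (the string module classification of \cite{BR87}). By Theorem~\ref{thm:geometric_modules}(1), together with the convention identifying an unoriented arc with its reversal and the uniqueness of the $\circ$-arc representative up to isotopy, the arcs coincide.
\item The modules $e_iA$ and $e_jA$ are non-isomorphic for $i\neq j$.
\item It remains to rule out coincidences within the perfect-path part and across the two parts. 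By Theorem~\ref{thm:perfect-path-GP}, the modules $pA$ are pairwise non-isomorphic for distinct perfect paths $p$, and none of them is projective. Hence no $\gamma_{w_p}^{\circ}$ equals some $\gamma_{w_i^{\mathrm{pr}}}^{\circ}$, and distinct $p$ give distinct arcs.
\end{enumerate}
These three steps also justify writing the union in \eqref{eq:GP-circ-arcs} as disjoint.

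The expected obstacle is minor bookkeeping. When $w_p$ is trivial (that is, $pA\cong S_{t(p)}$), the one-crossing arc carries a signed trivial string. In that case we must note that the two signs give the same unoriented arc, so the sign choice does not affect the bijection. Otherwise the argument is a direct composition of the cited results.
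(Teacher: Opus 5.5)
Your proposal is correct and follows the paper's proof. The paper likewise composes Theorem~\ref{thm:perfect-path-GP}, Lemma~\ref{lem:pA-string} and Theorem~\ref{thm:geometric_modules}; you simply spell out the well-definedness, surjectivity and injectivity checks that the paper leaves implicit. (Strictly, your first injectivity step already proves injectivity on its own; the other two steps show that the listing in \eqref{eq:GP-circ-arcs} has no repetitions, which justifies writing it as a disjoint union.)
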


\begin{proof}
The projective modules {provide} the first set in~\eqref{eq:GP-circ-arcs}.
By Theorem~\ref{thm:perfect-path-GP}, every non-projective indecomposable Gorenstein-projective module is uniquely $pA$ for some $p\in\perpath(A)$.
Lemma~\ref{lem:pA-string} and Theorem~\ref{thm:geometric_modules} identify it with $\gamma_{w_p}^{\circ}$.
\end{proof}

For \(p\in\perpath(A)\), we call \(\gamma_{w_p}^{\circ}\) the Gorenstein-projective arc associated with \(p\).
Thus Corollary \ref{prop:GP-arcs} realizes \(\operatorname{ind}\bigl(\mathsf{Gproj}(A)\bigr)\) as the distinguished set \(\mathsf{GP}_{\circ}(\mathcal S)\) of permissible $\circ$-arcs on the labelled tiled surface.

\begin{remark} \rm \label{rem:projective-two-arcs}
The projective module $e_iA$ has two geometric realizations: the permissible $\circ$-arc $\gamma_{w_i^{\mathrm{pr}}}^{\circ}$ represents it in the module component of a support $\tau$-tilting pair, while the dissection arc $\mathbf{x}_i\in\mathsf P$ represents it in the projective component.
\end{remark}

In the label-free model of a gentle algebra, perfect paths are precisely the arrows lying on full-relation oriented cycles, and such cycles follow the boundaries of internal polygons.
{Therefore} the above construction {provides} a geometric realization of the classification in \cite[Theorem~2.5]{K15}.

For a general string algebra, minimal relations may be of type~\ref{R1} or type~\ref{R2}.
Consequently, a full-relation oriented cycle need not determine a perfect path sequence; the prefix and suffix conditions in Definition~\ref{def:perfect-path} are essential, as Example~\ref{ex:nonperfect-full-cycle} shows.
Notice also that the arc reading \(p\) represents \(M(p)\), whereas the Gorenstein-projective module \(pA\cong M(w_p)\) is represented by \(\gamma_{w_p}^{\circ}\).

\subsection{Geometric classification of support \texorpdfstring{$\tau$}{tau}-tilting pairs}

The algebraic conditions are now explicit: select pairwise non-isomorphic summands from $\{e_iA\}\sqcup\{pA\mid p\in\perpath(A)\}$, impose $\Hom_A(T,\tau T)=0$ and $\Hom_A(P,T)=0$, and require $|T|+|P|=|Q_0|$.
The following application of \cite[Theorem~5.13]{BC24} expresses these conditions on the fixed surface.

\begin{definition}\label{def:GP-support-collection} \rm
A \defines{Gorenstein-projective support collection} is a maximal good collection $\mathcal D$ of generalised permissible arcs such that the equivalence class of every permissible clockwise-most $\rpoint$-arc in $\mathcal D$ contains the unique permissible $\circ$-arc representative belonging to $\mathsf{GP}_{\circ}(\mathcal S)$.
\end{definition}

\begin{corollary}\label{cor:GP-tau-rigid}
Let $T=\bigoplus_{i=1}^{m}T_i$ be a basic Gorenstein-projective module.
Then $T$ is $\tau$-rigid if and only if every oriented crossing from $\gamma_{T_i}^{\circlearrowright}$ to $\gamma_{T_j}^{\circlearrowright}$ is good for all $1\leqslant i,j\leqslant m$.
\end{corollary}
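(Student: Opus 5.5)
The statement should follow formally from Proposition~\ref{prop:Hom-tau-crossing} and additivity of $\Hom$. I first reduce $\tau$-rigidity of $T$ to its indecomposable summands. Since $\tau$ commutes with finite direct sums and $\Hom_A(-,-)$ is biadditive, we have
\[
\Hom_A(T,\tau T)\cong\bigoplus_{i,j=1}^{m}\Hom_A(T_i,\tau T_j).
\]
Hence $T$ is $\tau$-rigid if and only if $\Hom_A(T_i,\tau T_j)=0$ for all ordered pairs $(i,j)$, including $i=j$.

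Next I check that each $T_i$ is a string module, so that its clockwise-most representative $\gamma_{T_i}^{\circlearrowright}$ is defined. By Corollary~\ref{prop:GP-arcs}, every indecomposable Gorenstein-projective module is $M_\gamma$ for some $\gamma\in\mathsf{GP}_\circ(\mathcal S)$. Concretely, it is either $e_iA\cong M(w_i^{\mathrm{pr}})$ or $pA\cong M(w_p)$, the latter by Lemma~\ref{lem:pA-string}. In particular, no band module occurs among the summands.

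For each ordered pair $(i,j)$, Proposition~\ref{prop:Hom-tau-crossing} gives
\[
\dim_{\mathbf k}\Hom_A(T_i,\tau T_j)=\mathsf C(\gamma_{T_i}^{\circlearrowright},\gamma_{T_j}^{\circlearrowright})-X_{T_i,T_j}.
\]
By definition, $X_{T_i,T_j}$ counts the good oriented crossings among the $\mathsf C(\gamma_{T_i}^{\circlearrowright},\gamma_{T_j}^{\circlearrowright})$ oriented crossings from $\gamma_{T_i}^{\circlearrowright}$ to $\gamma_{T_j}^{\circlearrowright}$. So the difference is exactly the number of non-good oriented crossings from $\gamma_{T_i}^{\circlearrowright}$ to $\gamma_{T_j}^{\circlearrowright}$. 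It therefore vanishes if and only if every oriented crossing between them is good. Combining this over all pairs $(i,j)$ with the reduction above gives the claim.

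The one point needing care is the diagonal case $i=j$. Here the crossings in question are oriented self-crossings of $\gamma_{T_i}^{\circlearrowright}$, and I must confirm that Proposition~\ref{prop:Hom-tau-crossing} (\cite[Proposition~5.10]{BC24}) is stated for arbitrary, not necessarily distinct, string modules $M,N$. Its statement says ``for any string modules'', so self-crossings are counted in $\mathsf C$ and $X$ in the same way. This matches the convention in the definition of a good collection, which explicitly includes self-crossings. Everything else is bookkeeping.
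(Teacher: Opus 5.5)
Your proposal is correct and follows the paper's proof: decompose $\Hom_A(T,\tau T)$ as $\bigoplus_{i,j}\Hom_A(T_i,\tau T_j)$ and apply Proposition~\ref{prop:Hom-tau-crossing} to each pair, reading $\mathsf C-X_{T_i,T_j}$ as the number of non-good oriented crossings. Your additional checks, that every summand is a string module (so $\gamma_{T_i}^{\circlearrowright}$ is defined) and that the diagonal case covers self-crossings, are points the paper leaves implicit.
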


\begin{proof}
{As} $\operatorname{Hom}_A(T,\tau T) = \bigoplus_{1\leqslant i,j\leqslant m} \operatorname{Hom}_A(T_i,\tau T_j)$, the assertion follows from Proposition~\ref{prop:Hom-tau-crossing}.
\end{proof}

\begin{proposition}\label{thm:GP-support-tau-geometry}
There is a bijection between the following two classes:
\begin{enumerate}
\item the isomorphism classes of basic Gorenstein-projective support \(\tau\)-tilting pairs; \item the Gorenstein-projective support collections on \((\mathcal S,\mathsf P,\mathsf L)\).
\end{enumerate}
More precisely, if \(T=\bigoplus_{i=1}^{m}T_i\) and \(P=\bigoplus_{j=1}^{r}e_{v_j}A\), then the corresponding collection is
\begin{equation}\label{eq:GP-pair-collection}
\mathcal D(T,P)
=
\{\gamma_{T_i}^{\circlearrowright}\mid 1\leqslant i\leqslant m\}
\cup
\{\mathbf{x}_{v_j}\mid 1\leqslant j\leqslant r\}.
\end{equation}
\end{proposition}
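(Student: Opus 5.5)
The plan is to restrict the general bijection of \cite[Theorem~5.13]{BC24} to Gorenstein-projective module parts. That theorem identifies basic support \(\tau\)-tilting pairs \((T,P)\) over \(A=A_{\mathsf P,\mathsf L}\) with maximal good collections of generalised permissible arcs via exactly the assignment \eqref{eq:GP-pair-collection}. Under it, an indecomposable summand \(T_i\) goes to its clockwise-most representative \(\gamma_{T_i}^{\circlearrowright}\), and \(e_{v}A\) in the projective component goes to the dissection arc \(\mathbf x_{v}\) (Remark~\ref{rem:projective-two-arcs}). The first thing I will check is that this correspondence is stated there for all support \(\tau\)-tilting pairs; the orientation conventions of Definition~\ref{def:good-crossing} and Proposition~\ref{prop:Hom-tau-crossing} match it. Once this is confirmed, it only remains to identify which collections have Gorenstein-projective module part.

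For the direction from pairs to collections, let \((T,P)\) be a basic Gorenstein-projective support \(\tau\)-tilting pair. Then \(\mathcal D(T,P)\) is a maximal good collection by \cite[Theorem~5.13]{BC24}. Goodness of the crossings among the module arcs can also be seen directly from Corollary~\ref{cor:GP-tau-rigid}. Each \(T_i\) is indecomposable Gorenstein-projective, so Corollary~\ref{prop:GP-arcs} gives a unique \(\gamma\in\mathsf{GP}_\circ(\mathcal S)\) with \(M_\gamma\cong T_i\). By Theorem~\ref{thm:geometric_modules}, \(\gamma\) and \(\gamma_{T_i}^{\circlearrowright}\) lie in the same equivalence class, because both read the string of \(T_i\) up to inversion. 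Hence \(\mathcal D(T,P)\) is a Gorenstein-projective support collection in the sense of Definition~\ref{def:GP-support-collection}.

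For the converse, let \(\mathcal D\) be a Gorenstein-projective support collection. By \cite[Theorem~5.13]{BC24} it equals \(\mathcal D(T,P)\) for a unique basic support \(\tau\)-tilting pair \((T,P)\). Every non-dissection arc of \(\mathcal D\) is equivalent to some \(\gamma\in\mathsf{GP}_\circ(\mathcal S)\), so the corresponding summand is \(M_\gamma\), which is Gorenstein-projective by Corollary~\ref{prop:GP-arcs}. Since \(\mathsf{Gproj}(A)\) is closed under finite direct sums, \(T\) is Gorenstein-projective. The two assignments are restrictions of mutually inverse bijections, so they remain mutually inverse.

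I expect the main obstacle to be the well-definedness of the equivalence class, not any new homological argument. The clockwise-most \(\rpoint\)-arc and the \(\circ\)-arc in \(\mathsf{GP}_\circ(\mathcal S)\) must determine the same isomorphism class of module. This includes the degenerate case of trivial strings: a simple projective \(e_iA\) has two signed trivial strings, and I must make sure they give the same unoriented arc. I would handle this by appealing to the uniqueness of the \(\circ\)-representative in each class and to the identification \(w\sim w^{-1}\), both stated before Theorem~\ref{thm:geometric_modules}. Maximality needs no separate argument, since it is inherited from \cite{BC24} together with Proposition~\ref{prop:AIR-maximality}. In particular, maximality is taken among all good collections rather than only Gorenstein-projective ones, which matches the requirement \(|T|+|P|=|Q_0|\).
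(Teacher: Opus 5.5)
Your proposal is correct and follows the paper's proof: both restrict the bijection of \cite[Theorem~5.13]{BC24} (the paper notes only that the two components of a pair are written in the opposite order there). Both then use Corollary~\ref{prop:GP-arcs} to see that the module part is Gorenstein-projective exactly when the class of each clockwise-most $\rpoint$-arc contains its representative in $\mathsf{GP}_{\circ}(\mathcal S)$, which is Definition~\ref{def:GP-support-collection}; your separate treatment of the two directions, closure of $\mathsf{Gproj}(A)$ under direct sums, and the trivial-string case just make explicit what the paper leaves implicit.
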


\begin{proof}
By \cite[Theorem 5.13]{BC24}, where the two components of a support pair are written in the opposite order, support $\tau$-tilting pairs are in bijection with maximal good collections of generalised permissible arcs.
Under this correspondence, the permissible clockwise-most $\rpoint$-arcs correspond to the indecomposable summands of the module part, while the selected arcs of $\mathsf P$ correspond to those of the projective part.

By Corollary~\ref{prop:GP-arcs}, the module part is Gorenstein-projective if and only if the equivalence class of every such $\rpoint$-arc contains the unique permissible $\circ$-arc representative belonging to $\mathsf{GP}_{\circ}(\mathcal S)$.
This is precisely Definition~\ref{def:GP-support-collection}.
{Therefore} the correspondence restricts to the required bijection.
\end{proof}

\subsection{A finite compatibility test}

We now translate the preceding geometric classification into a finite algebraic compatibility test that avoids explicit computation of the Auslander--Reiten translate.
This also yields bounds on the non-projective summands analogous to those in the gentle case \cite{LZH}.

\begin{proposition}\label{prop:perfect-successor-tau}
Let \(p\in\perpath(A)\) have perfect successor \(q\).
Then, for every \(A\)-module \(N\),
\begin{equation}\label{eq:perfect-successor-tau}
\Hom_A(N,\tau(pA))=0
\quad\Longleftrightarrow\quad
Ne_{t(p)}\xrightarrow{\,\cdot q\,}Ne_{t(q)}
\text{ is surjective}.
\end{equation}
In particular, no \(\tau\)-rigid module contains \(pA\) together with \(qA\) or \(e_{t(q)}A\), and \(pA\) is not \(\tau\)-rigid if \(t(p)=t(q)\).
\end{proposition}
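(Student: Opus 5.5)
We apply Proposition~\ref{prop:projective-presentation-tau-rigid} to a minimal projective presentation of $pA$. For a monomial algebra the kernel of the epimorphism $e_{t(p)}A\to pA$, $x\mapsto px$, is spanned by the nonzero paths $u$ starting at $t(p)$ with $pu\in I$. By \ref{P2}, each nontrivial such $u$ has $q$ as a prefix. Trivial paths do not occur, since $p\notin I$. Hence the kernel is exactly $qA$, the right ideal generated by $q\in e_{t(p)}Ae_{t(q)}$. Conversely $pq\in I$ by \ref{P1}, so $qA$ is contained in the kernel. Since $q$ is a single generator with $qA\cong$ a quotient of $e_{t(q)}A$ via $y\mapsto qy$, the sequence
\[
e_{t(q)}A\xrightarrow{\;q\cdot-\;}e_{t(p)}A\longrightarrow pA\longrightarrow0
\]
is a projective presentation. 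It is minimal because $q$ is a nontrivial path, so the image of $q\cdot-$ lies in $\rad(e_{t(p)}A)$, and $e_{t(q)}A$ is indecomposable and maps onto $qA\neq0$.

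Now take $N$ arbitrary. Under the natural identifications $\Hom_A(e_iA,N)\cong Ne_i$, $f\mapsto f(e_i)$, the map $\Hom_A(d_1,N)$ becomes $n\mapsto n q$, from $Ne_{t(p)}$ to $Ne_{t(q)}$. Indeed, $f\circ(q\cdot-)$ sends $e_{t(q)}$ to $f(q)=f(e_{t(p)})q$. Proposition~\ref{prop:projective-presentation-tau-rigid} then gives \eqref{eq:perfect-successor-tau} directly. The only point needing care is the kernel computation, namely that \ref{P2} applies to every nonzero path $u$ with $pu\in I$. This holds because such a $u$ is nontrivial and not in $I$, which are exactly the hypotheses in \ref{P2}.

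For the consequences we test suitable $N$ against the criterion, using $qA\cong M(w_q)$ by Lemma~\ref{lem:pA-string}, with $(q,q')$ perfect for the next term $q'$ of the sequence.

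\textbf{Case $N=e_{t(q)}A$.} The element $e_{t(q)}\in Ne_{t(q)}$ would have to equal $nq$ for some $n\in e_{t(q)}Ae_{t(p)}$. This is impossible, because $nq$ lies in the radical while $e_{t(q)}$ does not. So surjectivity fails.

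\textbf{Case $N=qA$.} The element $q\in Ne_{t(q)}$ would have to equal $nq$ with $n\in qAe_{t(p)}$. Every element of $qA$ is a combination of paths beginning with $q$, so $nq$ is a combination of paths of the form $q\,u\,q$ with $u$ a path. Their lengths exceed that of $q$, so $q$ is not in the image, and again surjectivity fails.

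\textbf{Case $t(p)=t(q)$.} Take $N=pA$. The element $p\in Ne_{t(q)}$ must be $nq$ with $n\in pAe_{t(p)}$. But any such $n$ is a combination of paths $pu$, and $puq\in I$ by \ref{P2}. The only exception would be $u$ trivial, in which case $pq\in I$ anyway. Hence $nq=0\neq p$, and $pA$ is not $\tau$-rigid. The same monomial-length argument also covers the $qA$ case uniformly, so no separate difficulty is expected there.
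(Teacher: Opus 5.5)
Your route is the paper's route for most of the statement. You use the same minimal presentation $e_{t(q)}A\xrightarrow{q\cdot-}e_{t(p)}A\to pA\to0$, obtained from \ref{P1} and \ref{P2}, and the same identification of $\Hom_A(d_1,N)$ with right multiplication by $q$. You also test the same modules $e_{t(q)}A$, $qA$ and $pA$. The equivalence and the cases $N=e_{t(q)}A$ and $N=qA$ are fine.

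\textbf{The gap is in the case $t(p)=t(q)$.} There you claim that $puq\in I$ for every nonzero $pu$ ``by \ref{P2}'', and conclude that right multiplication by $q$ is zero on $pAe_{t(p)}$. Condition \ref{P2} does not give this; the logic runs the wrong way. It says that any nonzero nontrivial $q'$ with $pq'\in I$ must begin with $q$. It says nothing that forces $puq$ to vanish, and the claim is false in general.

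Here is a counterexample. Take a loop $c$ at $v$ and arrows $b\colon v\to w$, $d\colon w\to v$, with $I=\langle c^2,\ db,\ bdcbd\rangle$.
\begin{itemize}
\item This is a string algebra. Its nonzero paths are the subwords of $(cbd)^\infty$ that avoid $bdcbd$.
\item The pair $(c,c)$ is perfect. For \ref{P2}: the nonzero paths $b,bd,bdc,bdcb$ stay nonzero after prefixing $c$. For \ref{P3}: the nonzero paths $d,bd,cbd,dcbd$ stay nonzero after appending $c$.
\item So $p=q=c$ is a perfect path with $t(p)=t(q)$.
\item However, $n=cbd\in pAe_v$ gives $nq=cbdc\neq0$.
\end{itemize}

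The conclusion still holds, by the length argument you already used for $N=qA$; this is also what the paper does here. Every $nq$ with $n\in pAe_{t(p)}$ is a linear combination of paths $puq$ of length at least $\ell(p)+\ell(q)>\ell(p)$. Hence the basis path $p\in pAe_{t(q)}$ is not in the image, surjectivity fails, and $\Hom_A(pA,\tau(pA))\neq0$. Replace the ``$nq=0$'' step with this argument.
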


\begin{proof}
Perfectness {provides} a minimal projective presentation
\[
e_{t(q)}A\xrightarrow{\lambda_q}e_{t(p)}A \xrightarrow{\lambda_p}pA\longrightarrow0, \quad \lambda_q(x)=qx,\quad \lambda_p(x)=px.
\]
Indeed, \(\ker\lambda_p=qA\), while \(e_{t(p)}A\twoheadrightarrow pA\) and \(e_{t(q)}A\twoheadrightarrow qA\) are projective covers; the latter map is the corestriction of \(\lambda_q\) to its image.
Under the identifications \(\Hom_A(e_iA,N)\cong Ne_i\), the map induced by \(\lambda_q\) is right multiplication by \(q\).
Proposition \(\ref{prop:projective-presentation-tau-rigid}\) therefore {provides} \eqref{eq:perfect-successor-tau}.

For \(N=qA\), every nonzero path in the image of multiplication by \(q\) has length greater than \(\ell(q)\), so the image does not contain \(q\in qAe_{t(q)}\).
For \(N=e_{t(q)}A\), the image consists of paths of positive length and {therefore} does not contain \(e_{t(q)}\).
Thus \(qA\) and \(e_{t(q)}A\) cannot occur in a \(\tau\)-rigid module containing \(pA\).
Finally, if \(t(p)=t(q)\), multiplication by \(q\) on \(pA\) does not contain the shortest basis path \(p\) in its image.
{Therefore} \(\Hom_A(pA,\tau(pA))\neq0\).
\end{proof}

\begin{remark} \rm
If \(N\) is a string module, Proposition~\ref{prop:Hom-tau-crossing} and \eqref{eq:perfect-successor-tau} show that \(Ne_{t(p)}\xrightarrow{\cdot q}Ne_{t(q)}\) is surjective if and only if every oriented crossing from \(\gamma_N^{\circlearrowright}\) to \(\gamma_{pA}^{\circlearrowright}\) is good.
Thus failure of surjectivity is detected geometrically by a non-good oriented crossing.
\end{remark}

The perfect-successor map on \(\perpath(A)\) is a permutation, {as} every perfect path has a unique successor and predecessor.
Let \(m_1,\ldots,m_c\) be the lengths of its cycles.

\begin{corollary}\label{cor:perfect-cycle-bound}
Let \(T\) be a basic Gorenstein-projective \(\tau\)-rigid module, and let \(T_{\mathrm{np}}\) be its non-projective part.
Then
\[
|T_{\mathrm{np}}| \leqslant \sum_{j=1}^{c}\left\lfloor\frac{m_j}{2}\right\rfloor.
\]
In particular, a fixed point of the perfect-successor permutation cannot correspond to a summand of \(T\).
\end{corollary}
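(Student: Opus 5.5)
The argument reduces everything to Proposition~\ref{prop:perfect-successor-tau}. Let $\pi$ be the perfect-successor permutation on $\perpath(A)$, so $\pi(p)=q$ when $(p,q)$ is a perfect pair. By Theorem~\ref{thm:perfect-path-GP}, the non-projective indecomposable summands of $T$ are $pA$ with $p$ in some subset $S\subseteq\perpath(A)$, and $|T_{\mathrm{np}}|=|S|$. We then turn $\tau$-rigidity of $T$ into a combinatorial restriction on $S$ with respect to $\pi$.

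\emph{Step 1.} We claim $S\cap\pi(S)=\varnothing$. If $p\in S$ and $q=\pi(p)\in S$, then $T$ is a $\tau$-rigid module containing both $pA$ and $qA$. The ``in particular'' clause of Proposition~\ref{prop:perfect-successor-tau} forbids this, since $\tau$-rigidity of $T$ gives $\Hom_A(qA,\tau(pA))=0$ as a direct summand of $\Hom_A(T,\tau T)$.

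\emph{Step 2.} If $\pi(p)=p$, then $(p,p)$ is a perfect pair. Then $t(p)=s(p)=t(q)$ with $q=p$, and the last assertion of Proposition~\ref{prop:perfect-successor-tau} says $pA$ is not $\tau$-rigid, so $p\notin S$. This is the ``in particular'' statement. It is also consistent with Step 1, because a fixed point $p\in S$ would give $p\in S\cap\pi(S)$.

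\emph{Step 3.} Decompose $\perpath(A)$ into the $\pi$-cycles $C_1,\ldots,C_c$ of lengths $m_1,\ldots,m_c$. On each cycle, Step 1 says that $S\cap C_j$ contains no two cyclically consecutive elements $p,\pi(p)$. The elements of $C_j$, ordered along the cycle, form a cyclic graph on $m_j$ vertices; for $m_j=1$ it is a loop, which is handled by Step 2. An independent set in a cycle of length $m_j$ has at most $\lfloor m_j/2\rfloor$ vertices, and for $m_j=2$ the two vertices are adjacent. Hence $|S\cap C_j|\leqslant\lfloor m_j/2\rfloor$, and summing over $j$ gives the bound.

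There is no serious obstacle. The only care needed is in Step 2, to check that the hypothesis $t(p)=t(q)$ of the proposition indeed holds for a fixed point, and in the degenerate cycle lengths $1$ and $2$, where the independent-set count must be checked directly.
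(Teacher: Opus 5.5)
Your proposal is correct and follows essentially the same route as the paper. The paper also uses Proposition~\ref{prop:perfect-successor-tau} to forbid selecting two consecutive perfect paths on a successor cycle, and then sums the cyclic indicator inequalities $x_i+x_{i+1}\leqslant 1$ to get $2\sum_i x_i\leqslant m$; your independent-set count is the same argument, and your separate treatment of fixed points through the $t(p)=t(q)$ clause only duplicates the paper's $m=1$ case, which, as you note, Step~1 already covers.
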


\begin{proof}
By Proposition~\ref{prop:perfect-successor-tau}, no two consecutive perfect paths in a successor cycle can correspond to summands of \(T_{\mathrm{np}}\).
For a cycle of length \(m\), let \(x_1,\ldots,x_m\in\{0,1\}\) indicate the selected paths.
Then \(x_i+x_{i+1}\leqslant1\), with cyclic indices.
Summing {provides} \(2\sum_i x_i\leqslant m\), so at most \(\lfloor m/2\rfloor\) paths can be selected.
Summing over all cycles proves the inequality.
For \(m=1\), the same inequality {provides} \(x_1=0\).
\end{proof}

For a gentle algebra, the cycles of the perfect-successor permutation correspond precisely to the full-relation cycles, so Corollary \(\ref{cor:perfect-cycle-bound}\) recovers the bound in \cite[Corollary~5.3]{LZH}.
The bound is only necessary: a selection attaining it need not yield a \(\tau\)-rigid module or extend to a Gorenstein-projective support \(\tau\)-tilting pair.

\begin{corollary}\label{prop:single-replacement}
Let \(\alpha,\beta\in Q_1\cap\perpath(A)\) such that \((\alpha,\beta)\) is a perfect pair.
Suppose that \(s(\beta)\neq t(\beta)\) and that \(\beta\) is the unique arrow in \(Q_1\) with target \(t(\beta)\).
Then
\[
\alpha A\oplus\bigoplus_{i\neq t(\beta)}e_iA
\]
is a basic Gorenstein-projective \(\tau\)-tilting module.
\end{corollary}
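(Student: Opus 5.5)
The plan is to verify directly that $T:=\alpha A\oplus\bigoplus_{i\neq t(\beta)}e_iA$ is $\tau$-rigid and has $|Q_0|$ pairwise non-isomorphic indecomposable summands; then $(T,0)$ is a support $\tau$-tilting pair with $T$ sincere-sized, i.e.\ $T$ is $\tau$-tilting. Write $u=t(\alpha)=s(\beta)$ and $v=t(\beta)$, so $u\neq v$ by hypothesis.

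First I count summands. Since $\alpha\in\perpath(A)$, Theorem~\ref{thm:perfect-path-GP} says $\alpha A$ is an indecomposable non-projective Gorenstein-projective module. Hence $\alpha A$ is not isomorphic to any $e_iA$. The summands $e_iA$ with $i\neq v$ are pairwise non-isomorphic and projective, hence Gorenstein-projective. Therefore $T$ is basic, Gorenstein-projective, and $|T|=(|Q_0|-1)+1=|Q_0|$.

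Next I reduce $\tau$-rigidity to one test. Since $\tau$ of a projective module is zero, we have $\Hom_A(T,\tau T)=\Hom_A(T,\tau(\alpha A))$. The perfect successor of $\alpha$ is $\beta$, so Proposition~\ref{prop:perfect-successor-tau}, applied with $p=\alpha$ and $q=\beta$, reduces the claim to the following: for each indecomposable summand $N$ of $T$, right multiplication $Ne_u\xrightarrow{\cdot\beta}Ne_v$ is surjective. Because $A$ is monomial, $Ne_v$ is spanned by the nonzero paths in $N$ ending at $v$. It therefore suffices to show that each such path factors as $x'\beta$ with $x'\in Ne_u$.

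I check this summand by summand.
\begin{itemize}
\item For $N=e_iA$ with $i\neq v$, a nonzero path from $i$ to $v$ is nontrivial. Its last arrow ends at $v$, so by the uniqueness hypothesis that arrow is $\beta$. The path is thus $x'\beta$, where $x'$ is a path from $i$ to $u$; it is nonzero as a subpath of a nonzero path, so $x'\in e_iAe_u$.
\item For $N=\alpha A$, a basis path $\alpha x$ ending at $v$ must have $x$ nontrivial, since $\alpha$ ends at $u\neq v$. Again the last arrow of $x$ is $\beta$, so $\alpha x=(\alpha x')\beta$ with $\alpha x'\in\alpha Ae_u$ nonzero. If $x'$ is trivial, then $\alpha x=\alpha\beta\in I$, which is not a basis path, so this case does not occur.
\end{itemize}
Hence surjectivity holds for every summand, and $T$ is $\tau$-rigid.

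The only delicate step is the last one: both hypotheses $s(\beta)\neq t(\beta)$ and the uniqueness of $\beta$ are needed to guarantee that every nonzero path into $v$ ends with $\beta$ and passes through $u$. The omission of $e_vA$ is exactly what avoids the obstruction in Proposition~\ref{prop:perfect-successor-tau}.
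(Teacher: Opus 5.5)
Your proof is correct and follows the paper's argument essentially step for step. You reduce $\tau$-rigidity to the surjectivity of $Te_{s(\beta)}\xrightarrow{\cdot\beta}Te_{t(\beta)}$ via Proposition~\ref{prop:perfect-successor-tau}, check it on the path bases of $e_iAe_{t(\beta)}$ and $\alpha Ae_{t(\beta)}$ using that every nonzero path into $t(\beta)$ of positive length ends in $\beta$, and then count $|Q_0|$ pairwise non-isomorphic Gorenstein-projective summands.
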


\begin{proof}
Set \(j=t(\beta)\) and \(T=\alpha A\oplus\bigoplus_{i\neq j}e_iA\).
For \(i\neq j\), let \(w\) be a path-basis element of \(e_iAe_j\).
{As} \(i\neq j\), the path \(w\) has positive length.
As \(\beta\) is the unique arrow with target \(j\), we have \(w=u\beta\) for some \(u\in e_iAe_{s(\beta)}\).
{Therefore} \(e_iAe_{s(\beta)}\xrightarrow{\cdot\beta}e_iAe_j\) is surjective.

Now let \(w\) be a path-basis element of \(\alpha Ae_j\).
{As} \(t(\alpha)=s(\beta)\neq j\), we have \(w\neq\alpha\).
Thus \(w\) has positive length after its initial subpath \(\alpha\), and the uniqueness of \(\beta\) {provides} \(w=u\beta\) for some \(u\in\alpha Ae_{s(\beta)}\).
Therefore \(\alpha Ae_{s(\beta)}\xrightarrow{\cdot\beta}\alpha Ae_j\) is also surjective.

It follows that \(Te_{s(\beta)}\xrightarrow{\cdot\beta}Te_j\) is surjective.
{As} \(\beta\) is the perfect successor of \(\alpha\), Proposition \(\ref{prop:perfect-successor-tau}\) {provides} \(\Hom_A(T,\tau(\alpha A))=0\).
Moreover, \(\tau(e_iA)=0\) for every \(i\), and {therefore} \(\Hom_A(T,\tau T)=0\).
Thus \(T\) is \(\tau\)-rigid.

{As} \(\alpha\) is a perfect path, \(\alpha A\) is indecomposable, non-projective and Gorenstein-projective.
Together with the remaining pairwise non-isomorphic indecomposable projective summands, this shows that \(T\) is a basic Gorenstein-projective module with \(|Q_0|\) indecomposable summands.
{Therefore} \(T\) is \(\tau\)-tilting.
\end{proof}

For gentle algebras, Corollary~\ref{prop:single-replacement} recovers the constructive implication of \cite[Theorem~5.2]{LZH}.
For general string algebras, it {provides} only a sufficient condition.
The assumption \(s(\beta)\neq t(\beta)\) cannot be omitted.
Indeed, for \(A=\mathbf{k}[t]/(t^2)\), the pair \((t,t)\) is perfect and \(t\) is its own perfect successor.
{Therefore} \(tA\) is not \(\tau\)-rigid by Proposition~\ref{prop:perfect-successor-tau}.

For \(p\in\perpath(A)\), let \(q_p\) be its perfect successor and define
\[
\mathcal O(p)= \left\{ i\in Q_0\ \middle|\ e_iAe_{t(p)} \xrightarrow{\,\cdot q_p\,} e_iAe_{t(q_p)} \text{ is not surjective} \right\}.
\]
For \(U\subseteq\perpath(A)\), {let} \(\mathcal O(U)=\bigcup_{p\in U}\mathcal O(p)\).

We call a basic Gorenstein-projective \(\tau\)-tilting module non-trivial if it has at least one non-projective indecomposable direct summand.

\begin{proposition}\label{thm:nontrivial-GP-tau-existence}
The algebra \(A\) admits a non-trivial basic Gorenstein-projective \(\tau\)-tilting module if and only if there exists a nonempty set \(U\subseteq\perpath(A)\) such that
\[
rAe_{t(p)} \xrightarrow{\,\cdot q_p\,} rAe_{t(q_p)} \quad\text{is surjective for all }p,r\in U, \quad |\mathcal O(U)|=|U|.
\]
In this case,
\[
T_U= \bigoplus_{p\in U}pA \oplus \bigoplus_{i\notin\mathcal O(U)}e_iA
\]
is a basic Gorenstein-projective \(\tau\)-tilting module.
Conversely, every non-trivial basic Gorenstein-projective \(\tau\)-tilting module is of this form.
\end{proposition}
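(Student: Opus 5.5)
The plan is to reduce everything to Proposition~\ref{prop:perfect-successor-tau}, which characterizes $\Hom_A(N,\tau(pA))=0$ by surjectivity of right multiplication by $q_p$. We also use that $\tau$ of a projective vanishes. For a candidate module of the form
\[
T=\bigoplus_{p\in U}pA\oplus\bigoplus_{i\in V}e_iA
\]
with $U\subseteq\perpath(A)$ and $V\subseteq Q_0$, we have $\Hom_A(T,\tau T)=\bigoplus_{p\in U}\Hom_A(T,\tau(pA))$. Hence $T$ is $\tau$-rigid if and only if, for every $p\in U$, multiplication by $q_p$ is surjective from $Te_{t(p)}$ to $Te_{t(q_p)}$. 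Since multiplication by $q_p$ respects direct sums, this holds if and only if it is surjective on each summand. On the summands $rA$ with $r\in U$, this is exactly the first displayed condition. On a summand $e_iA$ it is surjective precisely when $i\notin\mathcal O(p)$. So $T$ is $\tau$-rigid if and only if the first condition holds and $V\cap\mathcal O(U)=\varnothing$.

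For sufficiency, set $V=Q_0\setminus\mathcal O(U)$. The summands $pA$ are pairwise non-isomorphic, indecomposable and non-projective by Theorem~\ref{thm:perfect-path-GP}, and they are distinct from the projectives $e_iA$. Thus $T_U$ is basic and Gorenstein-projective. It is $\tau$-rigid by the criterion above, and
\[
|T_U|=|U|+|Q_0|-|\mathcal O(U)|=|Q_0|
\]
because $|\mathcal O(U)|=|U|$. A $\tau$-rigid module with $|Q_0|$ summands is $\tau$-tilting. It is non-trivial since $U\neq\varnothing$.

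For the converse, let $T$ be a non-trivial basic Gorenstein-projective $\tau$-tilting module. By Theorem~\ref{thm:perfect-path-GP}, $T$ has the form above for some nonempty $U$ and some $V$. The criterion gives the first condition and $V\subseteq Q_0\setminus\mathcal O(U)$, so $|V|\leqslant|Q_0|-|\mathcal O(U)|$. Since $|U|+|V|=|Q_0|$, this yields $|\mathcal O(U)|\leqslant|U|$.

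The main obstacle is the reverse inequality $|\mathcal O(U)|\geqslant|U|$, together with $V=Q_0\setminus\mathcal O(U)$. The plan here is to use maximality. $T'=T\oplus\bigoplus_{i\notin\mathcal O(U)}e_iA$ is still $\tau$-rigid by the criterion. By Proposition~\ref{prop:AIR-maximality}, a $\tau$-tilting module (support pair with $P=0$) admits no proper $\tau$-rigid enlargement, so $T'=T$. This gives $V=Q_0\setminus\mathcal O(U)$, and therefore $|U|=|Q_0|-|V|=|\mathcal O(U)|$. Thus $T=T_U$ with $U$ satisfying both conditions.
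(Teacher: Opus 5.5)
Your proof is correct and follows essentially the same route as the paper: you reduce $\tau$-rigidity of $\bigoplus_{p\in U}pA\oplus\bigoplus_{i\in V}e_iA$ to summandwise surjectivity of multiplication by $q_p$ via Proposition~\ref{prop:perfect-successor-tau}, count summands for sufficiency, and use Proposition~\ref{prop:AIR-maximality} in the converse to force $V=Q_0\setminus\mathcal O(U)$. The preliminary inequality $|\mathcal O(U)|\leqslant|U|$ is harmless but redundant, since the maximality step gives equality directly; also, your $T'$ should be read as the basic module $\bigoplus_{p\in U}pA\oplus\bigoplus_{i\notin\mathcal O(U)}e_iA$, as in the paper.
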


\begin{proof}
Suppose first that \(U\) satisfies the stated conditions.
By Proposition~\ref{prop:perfect-successor-tau}, the first condition shows that \(\bigoplus_{p\in U}pA\) is \(\tau\)-rigid.
If \(i\notin\mathcal O(U)\), the same proposition {provides} \(\Hom_A(e_iA,\tau(pA))=0\) for every \(p\in U\).
{As} projective modules have zero Auslander--Reiten translate, \(T_U\) is \(\tau\)-rigid.
Moreover, \(|T_U|=|U|+|Q_0|-|\mathcal O(U)|=|Q_0|\).
Here distinct perfect paths determine pairwise non-isomorphic indecomposable Gorenstein-projective modules by Theorem~\ref{thm:perfect-path-GP}, so this count has no repetitions.
{Therefore} \(T_U\) is \(\tau\)-tilting.
It is Gorenstein-projective because every \(pA\) is Gorenstein-projective and all its remaining summands are projective.

Conversely, let $T=\bigoplus_{p\in U}pA\oplus\bigoplus_{i\in V}e_iA$ be a basic Gorenstein-projective $\tau$-tilting module with $U\neq\varnothing$.
By Proposition~\ref{prop:perfect-successor-tau}, the required surjectivity holds for all $p,r\in U$, and $\mathcal O(U)\subseteq Q_0\setminus V$.

It follows from the definition of $\mathcal O(U)$ that $T'=\bigoplus_{p\in U}pA\oplus\bigoplus_{i\notin \mathcal O(U)}e_iA$ is $\tau$-rigid.
{As} $V\subseteq Q_0\setminus \mathcal O(U)$, the module $T$ is a direct summand of $T'$.
As $T$ is $\tau$-tilting, Proposition~\ref{prop:AIR-maximality} {provides} $T'=T$.
{Therefore} $\mathcal O(U)=Q_0\setminus V$, and therefore $|\mathcal O(U)|=|U|$.
\end{proof}

Proposition \ref{thm:nontrivial-GP-tau-existence} {provides} an existence criterion for non-trivial Gorenstein-projective \(\tau\)-tilting modules.
We now encode all Gorenstein-projective support \(\tau\)-tilting pairs in a finite compatibility graph.

Let \(n=|Q_0|\), and let \(\mathcal M_A\) be the set of isomorphism classes of the \(\tau\)-rigid modules of the form \(e_iA\), \(i\in Q_0\), or \(pA\), \(p\in{\perpath(A)}\).
For each \(i\in Q_0\), introduce a formal negative vertex \(i^-\), representing \(e_iA\) in the projective part of a support pair.

Define a graph \(\mathcal G_A\) with vertex set \(\mathcal M_A\sqcup\{i^-\mid i\in Q_0\}\).
{Distinct \(X,Y\in\mathcal M_A\) are joined if
\[
\Hom_A(X,\tau Y)=0=\Hom_A(Y,\tau X).
\]
Equivalently, whenever \(\{X,Y\}=\{pA,Z\}\) with \(p\in\perpath(A)\), multiplication by the perfect successor \(q_p\) induces a surjection \(Ze_{t(p)}\to Ze_{t(q_p)}\); when both vertices are non-projective, this condition is imposed in both directions.}
Join \(i^-\) to \(X\in\mathcal M_A\) if \(Xe_i=0\), and join every pair of distinct negative vertices.

For a clique \(\mathcal C\) of \(\mathcal G_A\), set
\[
M_{\mathcal C}=\bigoplus_{X\in\mathcal C\cap\mathcal M_A}X, \quad P_{\mathcal C}=\bigoplus_{i^-\in\mathcal C}e_iA.
\]

\begin{theorem}\label{thm:finite-compatibility-graph}
The assignment \(\mathcal C\mapsto(M_{\mathcal C},P_{\mathcal C})\) induces a bijection between the \(n\)-element sets of pairwise adjacent vertices of \(\mathcal G_A\) and the isomorphism classes of basic Gorenstein-projective support \(\tau\)-tilting pairs.
\end{theorem}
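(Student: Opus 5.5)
We check that the assignment lands in the right set, that it is injective, and that it is surjective, treating the module vertices and the negative vertices separately. Throughout, a clique means a set of pairwise adjacent vertices, and $n$-element cliques are compared with pairs $(M,P)$ having $|M|+|P|=n$.

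\emph{The assignment is well defined.} Let $\mathcal C$ be an $n$-element clique.

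First, $M_{\mathcal C}$ is basic and Gorenstein-projective. Its summands are pairwise non-isomorphic elements of $\mathcal M_A$, and these are indecomposable Gorenstein-projective modules by Theorem~\ref{thm:perfect-path-GP}. Likewise $P_{\mathcal C}$ is basic, since distinct negative vertices give distinct $e_iA$.

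Second, $M_{\mathcal C}$ is $\tau$-rigid. We have
\[
\Hom_A(M_{\mathcal C},\tau M_{\mathcal C})=\bigoplus_{X,Y}\Hom_A(X,\tau Y).
\]
The diagonal terms vanish because every vertex of $\mathcal M_A$ is $\tau$-rigid. The off-diagonal terms vanish by adjacency. The equivalent surjectivity description of the edges follows from Proposition~\ref{prop:perfect-successor-tau}, using $\tau(e_iA)=0$ when one vertex is projective.

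Third, $\Hom_A(P_{\mathcal C},M_{\mathcal C})=0$. This holds because $\Hom_A(e_iA,X)\cong Xe_i=0$ for each $i^-\in\mathcal C$ and $X\in\mathcal C\cap\mathcal M_A$.

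Hence $(M_{\mathcal C},P_{\mathcal C})$ is a basic $\tau$-rigid pair with $|M_{\mathcal C}|+|P_{\mathcal C}|=|\mathcal C|=n$, so it is support $\tau$-tilting by definition.

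\emph{Injectivity and surjectivity.} Indecomposable summands are recovered from the pair by Krull--Schmidt. Moreover, the module vertices and the negative vertices are separated: $X\in\mathcal M_A$ is never isomorphic to a summand of $P$, and the projective part is recorded only by negative vertices. So $\mathcal C$ is recovered from $(M_{\mathcal C},P_{\mathcal C})$, and the map is injective.

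For surjectivity, let $(M,P)$ be a basic Gorenstein-projective support $\tau$-tilting pair. Each indecomposable summand $X$ of $M$ is Gorenstein-projective, hence is $e_iA$ or $pA$ by Theorem~\ref{thm:perfect-path-GP}. It is $\tau$-rigid as a summand of a $\tau$-rigid module, so $X\in\mathcal M_A$. Take $\mathcal C$ to consist of these summands together with $i^-$ for each summand $e_iA$ of $P$. Then:
\begin{enumerate}
\item distinct module vertices are adjacent, since $\Hom_A(M,\tau M)=0$ contains both $\Hom_A(X,\tau Y)$ and $\Hom_A(Y,\tau X)$;
\item $i^-$ is adjacent to each module vertex $X$, since $\Hom_A(P,M)=0$ gives $Xe_i=0$;
\item negative vertices are pairwise adjacent by definition;
\item $|\mathcal C|=|M|+|P|=n$.
\end{enumerate}
Thus $\mathcal C$ is an $n$-element clique mapping to $(M,P)$.

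The only point needing care is the equivalence of the two edge descriptions. We would derive it directly from \eqref{eq:perfect-successor-tau} with $N=Z$, applied in each direction where the target vertex is non-projective; when a vertex is projective, that direction is vacuous because $\tau(e_iA)=0$. A notable feature of the argument is that no maximality argument such as Proposition~\ref{prop:AIR-maximality} is needed: the cardinality condition $|\mathcal C|=n$ matches $|M|+|P|=n$ exactly.
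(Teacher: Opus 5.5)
Your proposal is correct and follows essentially the same route as the paper. Adjacency among module vertices, together with the $\tau$-rigidity of each vertex of $\mathcal M_A$, is equivalent to $\tau$-rigidity of $M_{\mathcal C}$; adjacency with negative vertices is equivalent to $\Hom_A(P_{\mathcal C},M_{\mathcal C})=0$ via $\Hom_A(e_iA,X)\cong Xe_i$; and $|\mathcal C|=|M_{\mathcal C}|+|P_{\mathcal C}|$ turns the size condition into the support $\tau$-tilting condition, with your explicit injectivity and surjectivity checks simply unpacking the paper's one-line appeal to the classification of indecomposable Gorenstein-projective modules.
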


\begin{proof}
By Proposition~\ref{prop:perfect-successor-tau}, two distinct vertices \(X,Y\in\mathcal M_A\) are adjacent precisely when \(\Hom_A(X,\tau Y)=\Hom_A(Y,\tau X)=0\).
{Therefore} the module vertices in \(\mathcal C\) are pairwise adjacent if and only if \(M_{\mathcal C}\) is \(\tau\)-rigid.

{As} \(\Hom_A(e_iA,X)\cong Xe_i\), adjacency between negative and module vertices is equivalent to \(\Hom_A(P_{\mathcal C},M_{\mathcal C})=0\).
Thus \((M_{\mathcal C},P_{\mathcal C})\) is a basic Gorenstein-projective \(\tau\)-rigid pair.
Conversely, the classification of indecomposable Gorenstein-projective modules shows that every such pair determines a unique set of pairwise adjacent vertices of \(\mathcal G_A\).

Finally, \(\lvert M_{\mathcal C}\rvert+\lvert P_{\mathcal C}\rvert =\lvert\mathcal C\rvert\).
Therefore \((M_{\mathcal C},P_{\mathcal C})\) is support \(\tau\)-tilting if and only if \(\lvert\mathcal C\rvert=n\).
\end{proof}
Theorem~\ref{thm:finite-compatibility-graph} {provides} a finite algorithm from \((Q,I)\).
First list all nonzero paths and use conditions~\ref{P1}--\ref{P3} to find the perfect paths and their successors.
Then construct the required path bases, check which multiplication maps are surjective, and list all \(n\)-element sets of pairwise adjacent vertices of \(\mathcal G_A\).
This process terminates because \(A\) has only finitely many nonzero paths.

A maximal set of pairwise adjacent vertices may have fewer than \(n\) elements.
Thus maximality alone is not enough.
The construction does not use a Gorenstein-projective Bongartz completion, which may not exist in general \cite{XZ24}.

\section{Endomorphism algebras}

Throughout this section, \(T\) denotes a nonzero basic Gorenstein-projective \(A\)-module; the trivial case \(T=0\) is omitted.

\subsection{Algebraic presentations}

Let
\[
G_A=\bigoplus_{i\in Q_0}e_iA\oplus \bigoplus_{p\in{\perpath(A)}}pA
\]
be a basic additive generator of \(\mathsf{Gproj}(A)\).
The Cohen--Macaulay Auslander algebra of \(A\) is \(A^{\mathrm{CMA}}=\End_A(G_A)\).
It is finite-dimensional by Corollary~\ref{cor:CM-finite}, and its bound quiver is described in \cite[Construction~3.14 and Theorem~3.15]{LZ24}.
Its vertices correspond to the modules \(e_iA\), \(i\in Q_0\), and \(pA\), \(p\in{\perpath(A)}\).
If the G-condition fails, \(A^{\mathrm{CMA}}\) may have non-monomial relations.

\begin{lemma}\label{prop:End-corner}
There exists an idempotent \(e_T\in A^{\mathrm{CMA}}\) such that
\[
\End_A(T)\cong e_TA^{\mathrm{CMA}}e_T.
\]
In particular, \(\End_A(T)\) is isomorphic to a corner algebra of \(A^{\mathrm{CMA}}\).
\end{lemma}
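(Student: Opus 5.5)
The plan is to use the standard fact that the endomorphism ring of a direct summand of $M$ is the corner of $\End_A(M)$ cut out by the idempotent projecting onto that summand.

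First, we realise $T$ inside $\add G_A$. Write $T=\bigoplus_{j=1}^{m}T_j$ with $T_j$ indecomposable and pairwise non-isomorphic, since $T$ is basic. Each $T_j$ is an indecomposable Gorenstein-projective module, so Theorem~\ref{thm:perfect-path-GP} shows that $T_j\cong e_iA$ for some $i\in Q_0$ or $T_j\cong pA$ for some $p\in\perpath(A)$. The summands of $G_A$ listed in its definition are pairwise non-isomorphic by the same theorem. Hence there is a subset $S$ of these summands, with $S$ in bijection with $\{T_1,\ldots,T_m\}$, such that $T\cong\bigoplus_{X\in S}X$, and this is a direct summand of $G_A$.

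Next, we define the idempotent. For each summand $X$ of $G_A$, let $\pi_X\colon G_A\to X$ and $\iota_X\colon X\to G_A$ be the canonical projection and inclusion. Put $\varepsilon_X=\iota_X\pi_X\in A^{\mathrm{CMA}}$; these are pairwise orthogonal idempotents summing to $1$. Set $e_T=\sum_{X\in S}\varepsilon_X$, which is the idempotent endomorphism of $G_A$ projecting onto $G_T:=\bigoplus_{X\in S}X$. Write $\iota\colon G_T\to G_A$ and $\pi\colon G_A\to G_T$ for the inclusion and projection, so that $\pi\iota=1_{G_T}$ and $\iota\pi=e_T$.

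Finally, we check the isomorphism. Define
\[
\Phi\colon \End_A(G_T)\to e_TA^{\mathrm{CMA}}e_T,\qquad f\mapsto \iota f\pi .
\]
The map is well defined because $e_T\iota f\pi e_T=\iota f\pi$. Its inverse is $g\mapsto \pi g\iota$, since $\pi\iota f\pi\iota=f$ and $\iota\pi g\iota\pi=e_Tge_T=g$ for $g\in e_TA^{\mathrm{CMA}}e_T$. It preserves multiplication: $\iota f\pi\,\iota f'\pi=\iota ff'\pi$. It sends $1_{G_T}$ to $e_T$, the identity of the corner, and it is clearly $\mathbf{k}$-linear. Composing with the isomorphism $\End_A(T)\cong\End_A(G_T)$ induced by $T\cong G_T$ gives the claim.

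There is no real obstacle here. The only point needing care is that the multiplication convention in $A^{\mathrm{CMA}}$ (composition order) must match. If the paper's convention for $A^{\mathrm{CMA}}$ is the opposite composition, the same map $\Phi$ is an isomorphism of the corresponding opposite rings, and the statement still holds.
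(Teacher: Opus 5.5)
Your proposal is correct and follows the paper's proof: identify $T$ with a direct summand of $G_A$, take $e_T=\iota\pi$, and use $f\mapsto\iota f\pi$ with inverse $g\mapsto\pi g\iota$. You also spell out why $T$ embeds as a summand (via Theorem~\ref{thm:perfect-path-GP} and basicness), a step the paper states in one line.
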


\begin{proof}
{As} \(T\) is basic and Gorenstein-projective, it may be identified with a direct summand of \(G_A\).
Write \(G_A=T\oplus T'\), and let \(\iota:T\to G_A\) and \(\pi:G_A\to T\) be the canonical inclusion and projection.
{As} \(\pi\iota=1_T\), the element \(e_T=\iota\pi\) is idempotent.
The map
\[
\End_A(T)\longrightarrow e_T\End_A(G_A)e_T,\quad f\longmapsto\iota f\pi,
\]
is an algebra isomorphism with inverse \(g\mapsto\pi g\iota\).
\end{proof}

Write \(T=\bigoplus_{i=1}^mT_i\), where the \(T_i\) are pairwise non-isomorphic indecomposable modules.
For \(X,Y\in\add T\), let \(\rad_A(X,Y)\) denote the space of radical maps from \(X\) to \(Y\), set
\begin{equation}\label{eq:relative-radical-square}
\rad_{\add T}^{2}(T_j,T_i)
=\sum_{k=1}^m
\rad_A(T_k,T_i)\circ\rad_A(T_j,T_k).
\end{equation}
Thus \(\rad_{\add T}^{2}(T_j,T_i)\) consists of sums of compositions of two radical maps through summands of \(T\).

For each \(i,j\), choose a basis of \(\rad_A(T_j,T_i)/\rad_{\add T}^{2}(T_j,T_i)\) and fix representatives in \(\rad_A(T_j,T_i)\).
Let \(Q_T\) be the quiver with vertices \(1,\ldots,m\) and one arrow \(a:i\to j\) for each chosen representative \(f_a:T_j\to T_i\).

Let \(\varepsilon_i\in B=\End_A(T)\) be the primitive idempotent corresponding to \(T_i\).
Under the identification \(\varepsilon_iB\varepsilon_j\cong\Hom_A(T_j,T_i)\), the assignments
\[
i\longmapsto\varepsilon_i, \quad a\longmapsto f_a
\]
define an algebra homomorphism \(\Phi_T:\mathbf{k}Q_T\to B\).
Set \(I_T=\ker\Phi_T\).

\begin{lemma}\label{thm:bound-quiver-End}
The map \(\Phi_T\) is surjective, the ideal \(I_T\) is admissible, and
\[
B\cong\mathbf{k}Q_T/I_T.
\]
In particular, \(Q_T\) is the Gabriel quiver of \(B\), and an arrow \(i\to j\) represents an irreducible map \(T_j\to T_i\).
\end{lemma}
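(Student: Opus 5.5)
The plan is to run the standard Gabriel-type argument for a basic finite-dimensional algebra, with the radical of \(B=\End_A(T)\) identified with the radical maps in \(\add T\). First, since \(T\) is basic and each \(T_i\) is indecomposable, \(\End_A(T_i)\) is local, and \(\mathbf{k}\) is algebraically closed. Together these give \(\varepsilon_i B\varepsilon_i/\rad=\mathbf{k}\), and \(\rad B\) is identified, under \(\varepsilon_iB\varepsilon_j\cong\Hom_A(T_j,T_i)\), with \(\bigoplus_{i,j}\rad_A(T_j,T_i)\); for \(i\neq j\) every map is radical because the \(T_i\) are pairwise non-isomorphic. Next I check that the definition in \eqref{eq:relative-radical-square} computes \(\varepsilon_i(\rad B)^2\varepsilon_j\). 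Indeed, \((\rad B)^2=\sum_k \rad B\,\varepsilon_k\,\rad B\), and the \((i,j)\)-component of this is exactly \(\sum_k\rad_A(T_k,T_i)\circ\rad_A(T_j,T_k)\). Hence the chosen representatives \(f_a\) project to a basis of \(\varepsilon_i(\rad B/\rad^2B)\varepsilon_j\). Note that \(\Phi_T\) is an algebra map because the composition order convention (paths left to right, maps right to left) matches an arrow \(i\to j\) with a map \(T_j\to T_i\).

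For surjectivity, the image of \(\Phi_T\) is a subalgebra \(C\) containing all \(\varepsilon_i\) together with elements spanning \(\rad B\) modulo \(\rad^2B\). So \(B=C+\rad^2 B\), and since \(C\supseteq \rad B\cap C\) generates, we get \(\rad B=(C\cap\rad B)+\rad^2B\). Iterating gives \(\rad B=(C\cap \rad B)+\rad^nB\) for all \(n\). Because \(B\) is finite-dimensional, \(\rad B\) is nilpotent, so \(\rad B\subseteq C\) and \(C=B\). Alternatively, this step is a Nakayama-type argument on the \(\mathbf{k}\)-spaces \(\varepsilon_i\rad B\varepsilon_j\).

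For admissibility, \(\Phi_T\) sends the arrow ideal \(R\) onto \(\rad B\), hence \(R^n\) onto \(\rad^nB\). If \(\rad^N B=0\), then \(R^N\subseteq I_T\). For \(I_T\subseteq R^2\), take \(x\in I_T\) and write \(x=\sum\lambda_i e_i+\sum\mu_a a+y\) with \(y\in R^2\). Multiplying by idempotents and reducing modulo \(\rad B\) kills \(y\) and the arrow terms and forces every \(\lambda_i=0\). Reducing modulo \(\rad^2B\) then forces \(\sum\mu_a f_a\in\rad^2 B\), hence every \(\mu_a=0\) by the choice of basis. The main subtlety is only the case \(i=j\) (loops), where one needs \(\End_A(T_i)=\mathbf{k}\cdot 1\oplus\rad\), which holds because \(\mathbf{k}\) is algebraically closed.

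The final claims follow directly. Since \(B\cong\mathbf{k}Q_T/I_T\) with \(I_T\) admissible, \(Q_T\) is the Gabriel quiver of \(B\). An arrow \(i\to j\) corresponds to an element of \(\rad_A(T_j,T_i)\setminus\rad^2_{\add T}(T_j,T_i)\), that is, a map that is irreducible in \(\add T\). This is the sense in which the statement's "irreducible map" should be read.
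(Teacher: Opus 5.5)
Your proof is correct and follows the same Gabriel-type argument as the paper. Both identify \(\varepsilon_i\rad B\,\varepsilon_j\) and \(\varepsilon_i\rad^2B\,\varepsilon_j\) with \(\rad_A(T_j,T_i)\) and \(\rad^{2}_{\add T}(T_j,T_i)\), get surjectivity of \(\Phi_T\) from the nilpotency of \(\rad B\), and get admissibility from \(R^N\subseteq I_T\subseteq R^2\). You supply details the paper only asserts, such as the reduction modulo \(\rad B\) and \(\rad^2B\) that gives \(I_T\subseteq R^2\).

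Your reading of ``irreducible'' as irreducible in \(\add T\) is the right one. Both arguments only show that the arrows lift a basis of \(\rad B/\rad^2B\), and such a map need not be irreducible in \(\operatorname{mod}A\): for instance, \(u:P_1\twoheadrightarrow S_1\) in Example~\ref{ex:tau-tilting-nonmonomial} factors through \(P_1/\rad^2P_1\).
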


\begin{proof}
Under the natural identification \(\varepsilon_iB\varepsilon_j\cong\Hom_A(T_j,T_i)\), one has
\[
\varepsilon_i\rad B\varepsilon_j \cong\rad_A(T_j,T_i), \quad \varepsilon_i\rad^2B\varepsilon_j \cong \sum_{k=1}^{m} \rad_A(T_k,T_i)\circ\rad_A(T_j,T_k) = \rad_{\add T}^{2}(T_j,T_i).
\]
Thus \(Q_T\) is the Gabriel quiver of \(B\).
The chosen arrow representatives lift a basis of \(\rad B/\rad^2B\); {as} \(\rad B\) is nilpotent, they generate \(B\) together with the primitive idempotents.
{Therefore} \(\Phi_T\) is surjective.

Let \(J\) be the arrow ideal of \(\mathbf{k}Q_T\).
The choice of the vertices and arrows {provides} \(I_T\subseteq J^2\).
If \((\rad B)^N=0\), then \(J^N\subseteq I_T\).
Therefore \(I_T\) is admissible, and \(B\cong\mathbf{k}Q_T/I_T\).
\end{proof}

Thus the relations in \(I_T\) are precisely the linear combinations of paths whose corresponding compositions of the maps \(f_a\) are zero.

The graph-map bases in Theorem~\ref{thm:homs} make Lemma~\ref{thm:bound-quiver-End} explicit.
{As} the \(T_i\) are pairwise non-isomorphic and indecomposable, \(\rad_A(T_j,T_i)=\Hom_A(T_j,T_i)\) for \(i\neq j\).
In \(\End_A(T_i)\), the identity is the only graph-map basis element outside the radical, so the remaining graph maps form a basis of \(\rad\End_A(T_i)\).
The arrows of \(Q_T\) are therefore obtained from bases of \(\rad_A(T_j,T_i)/\rad_{\add T}^{2}(T_j,T_i)\), and the relations are obtained by composing their graph-map representatives.
{As} \(\rad\End_A(T)\) is nilpotent, only finitely many paths need to be considered.
Paths of different lengths must be treated together, {as} the relations need not be homogeneous in path length; see Example~\ref{ex:mixed-length-relation}.

We next describe the corner algebra determined by a set of vertices.
The construction retains paths passing through vertices outside this set.

\begin{definition}\label{def:S-segment}
Let \(\Gamma=\mathbf{k}Q^\Gamma/J\) be a string algebra and let \(S\subseteq Q^\Gamma_0\).
A path \(u\notin J\) of positive length is called an \(S\)-segment if its endpoints lie in \(S\) and no internal vertex lies in \(S\).
Let \(Q_S\) be the quiver with vertex set \(S\) and an arrow \([u]:s(u)\to t(u)\) for each \(S\)-segment \(u\).
Let \(J_S\) be the monomial ideal generated by all paths \([u_1]\cdots[u_r]\), \(r\geqslant2\), such that \(u_1\cdots u_r\in J\).
\end{definition}

\begin{proposition}\label{prop:corner-string-compression}
Let $e=\sum_{v\in S}e_v$.
The assignment $[u]\mapsto\overline{u}$ induces an algebra isomorphism $e\Gamma e\cong\mathbf{k}Q_S/J_S.$ Moreover, $\mathbf{k}Q_S/J_S$ is a string algebra.
\end{proposition}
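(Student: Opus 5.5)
We work with the path basis of \(\Gamma\): since \(J\) is monomial, the nonzero paths \(u\notin J\) form a \(\mathbf{k}\)-basis of \(\Gamma\), and \(e\Gamma e\) has as basis the nonzero paths with both endpoints in \(S\), including the trivial paths \(e_v\), \(v\in S\). The plan is to define \(\Psi:\mathbf{k}Q_S\to e\Gamma e\) by \(e_v\mapsto e_v\) and \([u]\mapsto\overline u\), show that it is surjective with kernel exactly \(J_S\), and then verify conditions \ref{S1}--\ref{S3} for \((Q_S,J_S)\), together with admissibility.

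For surjectivity, take a nonzero path \(w\) of positive length with endpoints in \(S\). Cutting it at each internal visit to \(S\) gives a unique factorisation \(w=u_1\cdots u_r\) into \(S\)-segments. Each \(u_i\) is nonzero because it is a subpath of \(w\notin J\). Hence \(w=\Psi([u_1]\cdots[u_r])\).

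For the kernel, the same cutting argument shows that concatenation gives a bijection from paths \([u_1]\cdots[u_r]\) in \(Q_S\) to paths in \(Q^\Gamma\) with endpoints in \(S\) whose \(S\)-pieces are nonzero. Thus \(\Psi\) sends distinct paths of \(Q_S\) either to distinct basis elements or to \(0\). The paths sent to \(0\) are exactly those with \(u_1\cdots u_r\in J\). For \(r\geqslant2\) these are the generators of \(J_S\); for \(r=1\) this cannot happen, since \(u_1\notin J\). Because the kernel of a map sending paths to distinct basis elements or to \(0\) is spanned by the paths mapped to \(0\), we get \(\ker\Psi=J_S\). The set of such paths is closed under extension, so \(J_S\) is indeed the ideal they generate and is monomial, which gives \ref{S3}.

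Admissibility follows from three facts. First, \(J_S\subseteq R^2\) by construction. Second, \(Q_S\) is finite because \(\Gamma\) has finitely many nonzero paths. Third, a path of length \(N\) in \(Q_S\) expands to a path of length at least \(N\) in \(Q^\Gamma\), so \(R_{Q_S}^N\subseteq J_S\) for \(N\) large.

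The main obstacle is \ref{S1} and \ref{S2}, which we check by tracking arrows. An \(S\)-segment starting at \(v\) begins with one of the at most two arrows out of \(v\). Once that first arrow \(a\) is fixed, condition \ref{S2} leaves at most one nonzero continuation at each step. Hence the nonzero path starting with \(a\) is unique up to length, and it contains at most one \(S\)-segment, namely the one cut off at the first return to \(S\). Therefore at most two arrows of \(Q_S\) leave \(v\), and dually at most two enter \(v\), which is \ref{S1}.

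For \ref{S2}, let \([u]\) be an arrow of \(Q_S\) and suppose \([u][u']\notin J_S\), so \(uu'\notin J\). Then the first arrow of \(u'\) is the unique arrow \(b\) with \(\mathrm{last}(u)\,b\notin J\). By the uniqueness of continuations just described, \(u'\) is then determined as the first \(S\)-segment starting with \(b\). So there is at most one such \([u']\), and the dual argument handles predecessors.

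We also need the compositions \([u][u']\) to be considered only when \(t(u)=s(u')\). This holds automatically from the quiver structure of \(Q_S\). Loops in \(Q_S\), which arise when a segment returns to its starting vertex, require no separate treatment, since the arguments above count arrows rather than vertices.
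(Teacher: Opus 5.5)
Your proposal is correct and follows essentially the same route as the paper. Cutting nonzero paths uniquely at their visits to $S$ gives the bijection of path bases, and you spell out $\ker\Psi=J_S$ and admissibility more explicitly than the paper does. Conditions (S1) and (S2) then follow, as in the paper, because an $S$-segment is determined by its first (dually, last) arrow through the unique nonzero continuation.
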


\begin{proof}
Every nonzero path in \(\Gamma\) whose endpoints lie in \(S\) decomposes uniquely, at its successive visits to \(S\), into \(S\)-segments.
{Therefore} \([u]\mapsto\overline{u}\) {provides} a bijection between the path bases of \(\mathbf{k}Q_S/J_S\) and \(e\Gamma e\), and this bijection preserves multiplication.
Therefore \(e\Gamma e\cong\mathbf{k}Q_S/J_S\).
{As} \(\Gamma\) is finite-dimensional, all sufficiently long paths in \(Q_S\) belong to \(J_S\); {therefore} \(J_S\) is admissible and monomial.

An \(S\)-segment starting at a fixed vertex is determined by its first arrow, {as} condition~\ref{S2} allows at most one nonzero continuation.
By condition~\ref{S1}, at most two arrows of \(Q_S\) start at each vertex, and dually at most two end there.
Moreover, condition~\ref{S2} allows every arrow of \(Q_S\) at most one nonzero continuation on either side.
Thus \(\mathbf{k}Q_S/J_S\) is a string algebra.
\end{proof}

\begin{remark} \rm \label{rem:finite-compression}
Compression cannot be replaced by taking a full subquiver.
For example, if \(\Gamma=\mathbf{k}(1\xrightarrow{a}2\xrightarrow{b}3)\) and \(e=e_1+e_3\), then the full subquiver on \(\{1,3\}\) has no arrows, whereas \(e\Gamma e\) contains the arrow \(1\to3\) represented by the path \(ab\).

\end{remark}

\subsection{Geometric construction under the G-condition}

Recall from Definition~\ref{def:G-condition} that $A$ satisfies the G-condition precisely when every relation cycle supporting a perfect path sequence is gentle.
In particular, a string algebra without relation cycles satisfies the G-condition.

\begin{lemma}[{\cite[Corollary~4.2(2)]{LZ24}}]\label{thm:CMA-string-G}
The Cohen--Macaulay Auslander algebra $A^{\mathrm{CMA}}$ is a string algebra if and only if $A$ satisfies the G-condition.
\end{lemma}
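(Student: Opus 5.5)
The plan is to work directly with the bound quiver of \(A^{\mathrm{CMA}}=\End_A(G_A)\) from \cite[Construction~3.14 and Theorem~3.15]{LZ24}, using the reformulation noted after Definition~\ref{def:G-condition}: the G-condition is equivalent to \(\perpath(A)\subseteq Q_1\). By Lemma~\ref{thm:bound-quiver-End} applied to \(T=G_A\), the Gabriel quiver of \(A^{\mathrm{CMA}}\) is computed from irreducible maps in \(\add G_A\). We determine these using the graph-map bases of Theorem~\ref{thm:homs}, together with \(pA\cong M(w_p)\) from Lemma~\ref{lem:pA-string}.

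\emph{Sufficiency.} Assume every perfect path is an arrow \(\alpha\). Then \(\alpha A\) sits in a canonical factorisation
\[
e_{t(\alpha)}A\twoheadrightarrow \alpha A\hookrightarrow e_{s(\alpha)}A
\]
of left multiplication by \(\alpha\).
\begin{enumerate}
\item Show that these two maps are irreducible in \(\add G_A\), and that every other radical map between indecomposable summands factors through the arrows of \(Q\) or through these two maps. Here we use that a graph map into or out of \(M(w_\alpha)\) is determined by an overlap with the direct string \(w_\alpha\).
\item Conclude that the quiver of \(A^{\mathrm{CMA}}\) is obtained from \(Q\) (arrows reversed, in the Hom-direction convention) by splitting each perfect arrow \(\alpha\) through a new vertex \(v_\alpha\). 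This vertex has exactly one incoming and one outgoing arrow, so \ref{S1} is inherited from \(Q\).
\item Show the relations are monomial. A path is zero exactly when its expansion, after replacing each split pair by \(\alpha\), lies in \(I\). The extra zero relations are the length-two compositions \(\alpha A\hookrightarrow e_{s(\alpha)}A\to\cdots\) that hit \(\alpha\)'s perfect predecessor relation. Since that relation is quadratic by the G-condition, these are again monomial.
\item Check \ref{S2} at \(v_\alpha\) and at the neighbouring old vertices. The key point is that the perfect pair conditions \ref{P2}--\ref{P3} make the unique nonzero continuations at \(s(\alpha)\) and \(t(\alpha)\) the same as in \(A\).
\end{enumerate}

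\emph{Necessity.} Suppose some perfect path \(p=a_1\cdots a_r\) has \(r\geqslant2\).
\begin{enumerate}
\item Left multiplication by \(p\), viewed as a map \(e_{t(p)}A\to e_{s(p)}A\), factors as \(e_{t(p)}A\twoheadrightarrow pA\hookrightarrow e_{s(p)}A\). It also factors through the chain of projectives \(e_{t(a_i)}A\) given by the arrows \(a_i\).
\item Show the first factorisation does not pass through the chain. The reason is that \(pA\) is not a summand of any \(e_{t(a_i)}A\)-factorisation, because \(p\) is a minimal generator of \(pA\). Hence both paths in the Gabriel quiver are nonzero, parallel, and have different lengths.
\item This produces a genuine binomial relation between the path \(e_{t(p)}A\to pA\to e_{s(p)}A\) of length two and the path of length \(r\) through the vertices \(e_{t(a_i)}A\).
\item Then \ref{S1}/\ref{S2} fails, or the algebra is non-monomial. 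Either outcome contradicts the string condition.
\end{enumerate}

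The main obstacle is the necessity direction. Being monomial is not invariant under change of arrow representatives, so exhibiting one binomial relation does not immediately rule out a monomial presentation. The approach is to use the invariant fact that in a string algebra, two distinct nonzero parallel paths between the same vertices cannot have a nonzero sum that is killed modulo \(\rad^{3}\) while both survive. More concretely, in a string algebra with \(\dim\varepsilon_i\rad B\varepsilon_j/\rad^2\) computed from the quiver, the spaces \(e_xBe_y\) spanned by nonzero paths have a basis of paths that stay linearly independent.

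Here \(\Hom_A(e_{t(p)}A,e_{s(p)}A)\cong e_{s(p)}Ae_{t(p)}\) contains \(p\) only once, while two distinct nonzero paths map onto it. The plan is to make this dimension count precise, comparing the number of nonzero paths with \(\dim\Hom\), and derive the contradiction from it. If this argument becomes too delicate, we fall back on citing \cite[Corollary~4.2(2)]{LZ24} for this direction.
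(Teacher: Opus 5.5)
The paper gives no argument for this lemma; it quotes it from \cite[Corollary~4.2(2)]{LZ24}. The ``if'' direction is re-derived later in Proposition~\ref{prop:perfect-arrow-splitting}, and your sufficiency sketch aims at the same split presentation $(Q^\Gamma,J)$. The paper, however, does not go through irreducible maps. It defines $\Phi:\mathbf{k}Q^\Gamma\to\End_A(G_A)$ directly and matches the paths $u\notin J$ with path bases of $\Hom_A(G_y,G_x)$ via the contraction $c(u)$. That basis comparison is what gives monomiality. So in your step~3 you need more than a description of which paths vanish (using the endpoint convention of $c(u)$ at the vertices $v_\alpha$): you also need that distinct nonzero paths have linearly independent images.

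Your necessity argument has a genuine gap in two places.

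\emph{The factorisations need not be Gabriel-quiver paths.} The factors of your two factorisations of left multiplication $\lambda_p\colon e_{t(p)}A\to e_{s(p)}A$ need not be irreducible in $\add G_A$.
If $a_1$ is perfect, then $pA\hookrightarrow e_{s(p)}A$ factors through the inclusion $pA\subseteq a_1A$, and left multiplication by $a_1$ factors as $e_{t(a_1)}A\twoheadrightarrow a_1A\hookrightarrow e_{s(p)}A$. If $a_r$ is perfect, $e_{t(p)}A\twoheadrightarrow pA$ factors through $e_{t(p)}A\twoheadrightarrow a_rA$.
This is exactly the situation of Example~\ref{ex:tau-tilting-nonmonomial}, where every nonzero path of $A_m$ is perfect and $A^{\mathrm{CMA}}$ is the Auslander algebra of $A_m$. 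There the obstruction comes from the homogeneous length-two mesh relations, not from a relation between a path of length $2$ and one of length $r$.

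\emph{A binomial relation is not an invariant obstruction.} A binomial relation in one presentation does not show that $A^{\mathrm{CMA}}$ fails to be a string algebra; that requires that \emph{no} presentation satisfies \ref{S1}--\ref{S3}. Changing arrow lifts by elements of $\rad^2$ can turn binomial relations into monomial ones.
Your dimension count does not close this. In a hypothetical monomial presentation the two composites need not both be paths, so the fact that $p$ spans a single line in $e_{s(p)}Ae_{t(p)}$ contradicts nothing.

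What is missing is a presentation-independent obstruction. One option is the fact that over a string algebra the radical of every indecomposable projective module is a direct sum of at most two uniserial modules. You would combine this with an analysis of the quiver of $A^{\mathrm{CMA}}$ from \cite[Construction~3.14]{LZ24} that locates a vertex where it fails whenever $\perpath(A)\not\subseteq Q_1$. Another option is a lift-independence computation of the kind the paper carries out for $B_m$ in Example~\ref{ex:tau-tilting-nonmonomial}. As written, this direction rests only on your fallback citation, which is also all the paper offers.
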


\begin{corollary}\label{cor:End-string-G}
Assume that $A$ satisfies the G-condition.
For every basic Gorenstein-projective module $T\neq0$, the algebra $\End_A(T)$ is the string algebra obtained from the bound quiver of $A^{\mathrm{CMA}}$ by the $S_T$-segment construction of Definition~\ref{def:S-segment}, where $S_T$ is the set of vertices corresponding to the indecomposable summands of $T$.
\end{corollary}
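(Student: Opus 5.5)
The plan is to chain the three results just established: Lemma~\ref{prop:End-corner}, Lemma~\ref{thm:CMA-string-G} and Proposition~\ref{prop:corner-string-compression}.

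First, Lemma~\ref{prop:End-corner} gives an idempotent \(e_T\in A^{\mathrm{CMA}}\) with \(\End_A(T)\cong e_TA^{\mathrm{CMA}}e_T\). I will make \(e_T\) explicit in terms of the bound quiver of \(A^{\mathrm{CMA}}\). Its vertices correspond bijectively to the indecomposable summands \(e_iA\) and \(pA\) of \(G_A\), and the primitive idempotent at such a vertex is the projection of \(G_A\) onto that summand followed by the inclusion. Since \(T\) is basic, it is identified with the summand \(\bigoplus_{v\in S_T}\) of \(G_A\). Then \(e_T=\iota\pi\) equals \(\sum_{v\in S_T}e_v\), where \(e_v\) are the vertex idempotents of the presentation \(A^{\mathrm{CMA}}\cong\mathbf{k}Q^\Gamma/J\). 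This uses that the isomorphism of \cite[Theorem~3.15]{LZ24} sends trivial paths to the idempotent endomorphisms of the summands. Any other choice of complete set of primitive orthogonal idempotents is conjugate, so the corner is determined up to isomorphism by \(S_T\).

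Second, under the G-condition, Lemma~\ref{thm:CMA-string-G} says that \(\Gamma=A^{\mathrm{CMA}}\) is a string algebra. In particular its presentation \(\mathbf{k}Q^\Gamma/J\) is monomial with admissible \(J\), which is exactly the hypothesis of Definition~\ref{def:S-segment}. Taking \(S=S_T\), which is nonempty because \(T\neq0\), Proposition~\ref{prop:corner-string-compression} gives \(e_T\Gamma e_T\cong\mathbf{k}Q_{S_T}/J_{S_T}\), and this algebra is a string algebra. Composing the two isomorphisms yields the corollary.

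The only delicate point, and the one I expect to be the main obstacle, is the first step. I need to check that the presentation of \(A^{\mathrm{CMA}}\) from \cite{LZ24} is a string presentation whose vertex idempotents really are the summand projections, so that no change of idempotents spoils monomiality. I would handle this by noting that the idempotents are fixed by the vertex labelling of \cite[Construction~3.14]{LZ24}. Proposition~\ref{prop:corner-string-compression} is then applied to that fixed presentation.
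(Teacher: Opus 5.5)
Your proposal is correct and matches the paper's proof, which simply combines Lemma~\ref{prop:End-corner}, Lemma~\ref{thm:CMA-string-G} and Proposition~\ref{prop:corner-string-compression}. Your extra step, identifying $e_T$ with $\sum_{v\in S_T}e_v$ up to conjugacy of primitive idempotents, only makes explicit what the paper leaves implicit. It is consistent with the explicit presentation in Proposition~\ref{prop:perfect-arrow-splitting}, where each vertex is sent to the idempotent of the corresponding summand of $G_A$.
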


\begin{proof}
Combine Lemma~\ref{prop:End-corner}, Lemma~\ref{thm:CMA-string-G}, and Proposition~\ref{prop:corner-string-compression}.
\end{proof}

We first describe the bound quiver of \(A^{\mathrm{CMA}}\).
Under the G-condition, every perfect path is an arrow, and each right ideal \(\alpha A\) corresponds to a new vertex \(v_\alpha\).
This is the perfect-arrow case of \cite[Section~3 and Corollary~4.2]{LZ24}.

\begin{lemma}\label{lem:perfect-arrow-endpoints}
Assume the G-condition.
Let \(\alpha\) be a perfect arrow, and let \(\gamma\) and \(\beta\) be its perfect predecessor and successor, respectively.
Then \(\alpha\beta\) is the only minimal relation beginning with \(\alpha\), and \(\gamma\alpha\) is the only minimal relation ending with \(\alpha\).
\end{lemma}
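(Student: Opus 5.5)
The plan is to use the perfect-pair axioms \ref{P2} and \ref{P3} together with the string conditions \ref{S1}--\ref{S2}; the G-condition enters only through $\perpath(A)\subseteq Q_1$. Since $\alpha$, $\beta$ and $\gamma$ are arrows, the pair $(\alpha,\beta)$ satisfies \ref{P1}, so $\alpha\beta\in I$. A quadratic path in $I$ is automatically a minimal relation, because its proper subpaths are arrows and $I\subseteq R_Q^2$. The same holds for $\gamma\alpha$. It therefore remains to prove uniqueness. By the symmetry between \ref{P2} and \ref{P3}, applied to the pair $(\gamma,\alpha)$, it suffices to treat minimal relations beginning with $\alpha$.

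So let $r$ be a minimal relation beginning with $\alpha$, and write $r=\alpha q'$ with $q'$ a nontrivial path starting at $t(\alpha)$. Minimality of $r$ gives $q'\notin I$, since a proper subpath of a minimal relation is nonzero. Also $s(q')=t(\alpha)$ and $\alpha q'\in I$. Condition \ref{P2} for the perfect pair $(\alpha,\beta)$ then says that $\beta$ is a prefix of $q'$, so $q'=\beta q''$. Hence $r=\alpha\beta q''$ contains the relation $\alpha\beta$ as a subpath. Minimality of $r$ forces $q''$ to be trivial, so $r=\alpha\beta$.

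The dual argument handles relations ending with $\alpha$. Write such a relation as $r=p'\alpha$ with $p'\notin I$ nontrivial, $t(p')=s(\alpha)$ and $p'\alpha\in I$. Condition \ref{P3} for the pair $(\gamma,\alpha)$ says that $\gamma$ is a suffix of $p'$. Then $\gamma\alpha$ is a subpath of $r$, and minimality gives $r=\gamma\alpha$.

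The argument is short, and I expect no real obstacle. The only point needing care is the justification that $q'\notin I$ (respectively $p'\notin I$), which is exactly what \ref{P2} and \ref{P3} require before they can be applied. This follows from the convention fixed earlier that no proper subpath of a minimal relation lies in $I$.
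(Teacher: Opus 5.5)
Your proposal is correct and matches the paper's proof: write the minimal relation as $\alpha v$ with $v\notin I$, use \ref{P2} to see that $\beta$ is a prefix of $v$, conclude $v=\beta$ by minimality, and argue dually with \ref{P3}. Your extra remarks, that $\alpha\beta$ and $\gamma\alpha$ are themselves minimal relations and that $v\notin I$ follows from minimality, make explicit what the paper leaves implicit.
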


\begin{proof}
Let \(\alpha v\) be a minimal relation, where \(v\) is a nonzero path.
By condition~\ref{P2}, \(\beta\) is a prefix of \(v\).
{Therefore} \(\alpha v\) contains the relation \(\alpha\beta\), and minimality {provides} \(v=\beta\).
The second assertion follows dually from condition~\ref{P3}.
\end{proof}

Assume the G-condition and set \(R={\perpath(A)}\subseteq Q_1\).
Construct \(Q^\Gamma\) from \(Q\) by replacing each \(\alpha\in R\) with $s(\alpha)\xrightarrow{\alpha^-}v_\alpha \xrightarrow{\alpha^+}t(\alpha),$ and leaving every arrow outside \(R\) unchanged.
Thus $Q^\Gamma_0=Q_0\sqcup\{v_\alpha\mid\alpha\in R\}.$ For a path \(p\) in \(Q\), let \(p^*\) be obtained by replacing every \(\alpha\in R\) with \(\alpha^-\alpha^+\).
Let \(J\) be generated by
\begin{enumerate}[label=(E\arabic*)]

\item  \(\alpha^+\beta^-\) for each \(\alpha\in R\), where \(\beta\) is the perfect successor of \(\alpha\); \label{E1}

\item \(r^*\) for each minimal relation \(r\) of \(A\) which is not of the form \(\alpha\beta\) occurring in \ref{E1}.\label{E2}

\end{enumerate}

For \(i\in Q_0\), set \(G_i=e_iA\), and for \(\alpha\in R\), set \(G_{v_\alpha}=\alpha A\).
Then \(G_A=\bigoplus_{x\in Q^\Gamma_0}G_x\).
Define \(\Phi:\mathbf{k}Q^\Gamma\to\End_A(G_A)\) as follows.
Each vertex is sent to the corresponding primitive idempotent.
An unchanged arrow \(a:i\to j\) is sent to the map \(e_jA\to e_iA\), \(x\mapsto ax\), and
\[
\Phi(\alpha^-): \alpha A\hookrightarrow e_{s(\alpha)}A, \quad \Phi(\alpha^+): e_{t(\alpha)}A\twoheadrightarrow\alpha A,\quad x\mapsto\alpha x.
\]

Figure~\ref{fig:local-perfect-arrow-splitting} illustrates
the local geometric realization of the perfect-arrow
splitting construction.

\begin{figure}[htbp]
	\centering
	\begin{tikzpicture}[scale=0.35,>=stealth]
		
		% Indices are cyclic: x_7=x_1.
		% At corner i the clockwise arrow is alpha_i: x_i -> x_{i+1}.
		\begin{scope}[shift={(-10,0)}]
			\foreach \i/\ang in {1/90,2/150,3/210,4/270,5/330,6/390}
			\coordinate (P\i) at (\ang:6);
			
			\draw[red,line width=1pt]
			(P1)--(P2)--(P3)--(P4)--(P5)--(P6)--cycle;
			
			\foreach \i/\ang/\j in {
				1/90/2,2/150/3,3/210/4,4/270/5,5/330/6,6/390/1
			} {
				\begin{scope}[shift={(P\i)},rotate={\ang-90}]
					\draw[dark-green,line width=1.2pt,->]
					(330:1.65) arc[start angle=330,end angle=210,radius=1.65];
				\end{scope}
				\node[dark-green,font=\scriptsize] at (\ang:3.8) {$\alpha_{\i}$};
				\node[red,font=\scriptsize] at ({\ang+30}:5.95) {$x_{\j}$};
			}
			
			\foreach \i in {1,...,6}
			\draw[thick,red,fill=red] (P\i) circle (0.2cm);
			\draw[thick,black,fill=white] (0,0) circle (0.2cm);

		\end{scope}
		
		\node at (0,0) {$\Longrightarrow$};
		
		\begin{scope}[shift={(10,0)}]
			\foreach \i/\ang in {1/90,2/150,3/210,4/270,5/330,6/390} {
				\coordinate (Q\i) at (\ang:6);
				\coordinate (R\i) at (\ang:2);
			}
			
			\draw[black,line width=1.5pt,fill=gray!50] (0,0) circle (2cm);
			\draw[red,line width=1pt]
			(Q1)--(Q2)--(Q3)--(Q4)--(Q5)--(Q6)--cycle;
			
			\foreach \i in {1,...,6}
			\draw[red,line width=1pt] (Q\i)--(R\i);
			
			\foreach \i/\ang/\j in {
				1/90/2,2/150/3,3/210/4,4/270/5,5/330/6,6/390/1
			} {
				\begin{scope}[shift={(Q\i)},rotate={\ang-90}]
					\draw[dark-green,line width=1.2pt,->]
					(330:1.65) arc[start angle=330,end angle=270,radius=1.65];
					\draw[dark-green,line width=1.2pt,->]
					(270:1.65) arc[start angle=270,end angle=210,radius=1.65];
					\node[dark-green,font=\scriptsize] at (1.25,-2.1)
					{$\alpha_{\i}^{-}$};
					\node[dark-green,font=\scriptsize] at (-1.25,-2.1)
					{$\alpha_{\i}^{+}$};
				\end{scope}
				\node[red,font=\scriptsize] at ({\ang+30}:5.95) {$x_{\j}$};
				
			}
			
			\foreach \i/\ang in {1/74,2/134,3/226,4/254,5/314,6/46}
			\node[red,font=\scriptsize] at (\ang:3.05) {$v_{\alpha_{\i}}$};
			
			\foreach \i in {1,...,6} {
				\draw[thick,red,fill=red] (Q\i) circle (0.2cm);
				\draw[thick,red,fill=red] (R\i) circle (0.2cm);
			}
			\foreach \ang in {60,0,-60,-120,180,120}
			\draw[thick,black,fill=white] (\ang:2) circle (0.2cm);

		\end{scope}
		
	\end{tikzpicture}
	\caption{
		Splitting an internal polygon whose angles correspond to
		perfect arrows. Each angle $\alpha_i:x_i\to x_{i+1}$ is
		replaced by two consecutive angles
		$\alpha_i^-:x_i\to v_{\alpha_i}$ and
		$\alpha_i^+:v_{\alpha_i}\to x_{i+1}$, with indices taken modulo $6$.
		The white puncture becomes a boundary component with alternating
		red and white marked points; the shaded disk is removed.
	}
	\label{fig:local-perfect-arrow-splitting}
\end{figure}

The following proposition is proved by Chen and Lu in \cite{CL2017, CL2019}.
In order to facilitate readers' understanding of the details, we still provide a proof for this proposition. It should be noted that this proof idea comes from Chen and Lu, and it is not a new proof we have provided.

\begin{proposition}\label{prop:perfect-arrow-splitting}
The homomorphism \(\Phi\) induces an algebra isomorphism
\[
\overline{\Phi}:\mathbf{k}Q^\Gamma/J \xrightarrow{\ \cong\ }A^{\mathrm{CMA}}.
\]
Moreover, \(J\) is admissible and monomial, and \((Q^\Gamma,J)\) satisfies the string conditions.
\end{proposition}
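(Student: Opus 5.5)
I will prove this by a basis-counting argument: show that \(\Phi\) kills \(J\), that \(\overline{\Phi}\) is surjective, and that the images of the nonzero paths of \(\mathbf{k}Q^\Gamma/J\) are linearly independent in \(A^{\mathrm{CMA}}=\End_A(G_A)\). Throughout, \(\Hom_A(G_y,G_x)\) (the component from \(G_y\) to \(G_x\)) is identified with a subspace of \(e_{s}Ae_{t}\) via left multiplication. Under this identification:
\[
\Hom_A(e_jA,e_iA)\cong e_iAe_j,\qquad \Hom_A(e_jA,\alpha A)\cong \alpha Ae_j,
\]
\[
\Hom_A(\alpha A,e_iA)\cong\{x\in e_iAe_{s(\alpha)}: x\alpha A\subseteq e_iA\}=e_iAe_{s(\alpha)}.
\]
Since \(\alpha A\cong M(w_\alpha)\) by Lemma~\ref{lem:pA-string}, morphisms out of \(\alpha A\) are determined by the image of \(\alpha\). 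I would use the graph-map basis of Theorem~\ref{thm:homs}, or the direct description above, to show that each Hom space between the \(G_x\) has a basis of nonzero paths \(u\) of \(A\), suitably prefixed by \(\alpha\) or ``divided'' by \(\alpha\). For instance, \(\Hom_A(\alpha A,\beta A)\) is spanned by the maps \(\alpha x\mapsto u x\) with \(u\alpha\in\beta A\) nonzero, i.e.\ \(u=\beta u'\); but \(\beta u'\alpha\) must be nonzero, which forces compatibility with the relations.

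First I check that \(\Phi(J)=0\).
\begin{itemize}
\item For \ref{E1}, the relation \(\alpha^+\beta^-\) is sent to the composition
\[
\beta A\hookrightarrow e_{t(\alpha)}A\twoheadrightarrow\alpha A,\qquad x\mapsto\alpha x,
\]
which is zero on \(\beta A\) because \(\alpha\beta\in I\). The order of composition needs care with the left-to-right path convention.
\item For \ref{E2}, \(\Phi(r^*)\) is left multiplication by \(r\), which is \(0\) in \(A\).
\end{itemize}

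Next, surjectivity and independence. By the string-algebra structure, every nonzero path \(u\) of \(Q^\Gamma\) outside \(J\) is \(p^*\) with possibly an initial \(\alpha^+\) and/or a final \(\beta^-\) cut off. The four cases are:
\begin{itemize}
\item \(e_i\to e_j\): the path is \(p^*\) with \(p\) a nonzero path of \(A\).
\item \(v_\alpha\to e_j\): the path is \(\alpha^+p^*\), i.e.\ the truncation of \((\alpha p)^*\).
\item \(e_i\to v_\beta\): the path ends with \(\beta^-\).
\item \(v_\alpha\to v_\beta\): both truncations occur.
\end{itemize}
Each such path maps to left multiplication by the corresponding element of \(A\) (for example \(\alpha p\), or \(p\) composed with the inclusion), and by the explicit Hom descriptions these images form exactly the path bases of the Hom spaces. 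This uses Lemma~\ref{lem:perfect-arrow-endpoints}: the only relation starting with \(\alpha\) is \(\alpha\beta\), so \((\alpha p)^*\) is nonzero precisely when \(\alpha^+p^*\notin J\) and \(p\) does not begin with \(\beta\). The G-condition enters here by making every perfect path an arrow, so that this bookkeeping is local.

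Dimension comparison then gives the isomorphism. It follows that \(J\) is generated by monomials and contains all long paths, hence is admissible (it lies in the square of the arrow ideal, since \(\alpha^+\beta^-\) and \(r^*\) have length at least \(2\)).

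Finally I verify (S1)--(S3) for \((Q^\Gamma,J)\):
\begin{itemize}
\item Vertices \(v_\alpha\) have one in-arrow and one out-arrow, and old vertices keep their arrow counts, so (S1) holds.
\item For (S2) at \(v_\alpha\) nothing needs checking. At old vertices, a composable pair \(\alpha^+b\) (resp.\ \(b\beta^-\)) survives exactly when \(\alpha b\notin I\) (resp.\ \(b\beta\notin I\)), with \(\alpha^+\beta^-\in J\) replacing \(\alpha\beta\), so (S2) is inherited from \(A\).
\end{itemize}

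The main obstacle is the precise description of \(\Hom_A(\alpha A,\beta A)\) and \(\Hom_A(\alpha A,e_iA)\), including maps that are not given by left multiplication by a path starting at \(s(\alpha)\) in the naive way. I would handle these through the string-module overlap combinatorics of Theorem~\ref{thm:homs} applied to \(M(w_\alpha)\) and \(M(w_\beta)\), checking that each admissible pair corresponds to exactly one path in \(Q^\Gamma\setminus J\).
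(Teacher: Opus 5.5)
Your overall plan matches the paper's: check $\Phi(J)=0$, compare path bases of $e_x(\mathbf{k}Q^\Gamma/J)e_y$ with bases of $\Hom_A(G_y,G_x)$, then verify (S1)--(S2). The cases where $y$ is an original vertex go through with Lemma~\ref{lem:perfect-arrow-endpoints} as you indicate.

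The gap is in the step you call the main obstacle, and the description you give there is wrong. $\Hom_A(\alpha A,e_iA)$ is not $e_iAe_{s(\alpha)}$. Left multiplication by $x\in e_iAe_{s(\alpha)}$ restricts to the zero map on $\alpha A$ whenever $x\alpha=0$, for instance when $x$ ends with the perfect predecessor of $\alpha$. Already for $A=\mathbf{k}[t]/(t^2)$, where $t$ is a perfect arrow that is its own successor, $\Hom_A(tA,A)$ is one-dimensional, while $e_1Ae_1=A$ is two-dimensional. Correspondingly, the only path from $1$ to $v_t$ outside $J=\langle t^+t^-\rangle$ is $t^-$. So with your identification the basis comparison fails for every path ending at a new vertex, i.e.\ for all morphisms out of some $\alpha A$. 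Postponing the correct count to an overlap analysis via Theorem~\ref{thm:homs} leaves the heart of the isomorphism unproved.

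What is missing is a direct use of perfectness. Let $\beta$ be the perfect successor of $\alpha$. The exact sequence $0\to\beta A\to e_{t(\alpha)}A\xrightarrow{\alpha\cdot}\alpha A\to0$ shows that $f\mapsto f(\alpha)$ identifies $\Hom_A(\alpha A,N)$ with $\{z\in Ne_{t(\alpha)}\mid z\beta=0\}$.
\begin{itemize}
\item For $N=G_x$, the condition $z\beta=0$ can be checked on the path basis, because $A$ is monomial.
\item By \ref{P3}, the nonzero paths annihilated by $\beta$ are exactly those ending with $\alpha$ (and beginning with $\delta$ if $x=v_\delta$).
\item Hence $\Hom_A(\alpha A,e_iA)\cong e_iA\alpha$, with basis the nonzero paths $p\alpha$. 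These correspond to the paths $p^*\alpha^-\notin J$, since Lemma~\ref{lem:perfect-arrow-endpoints} gives $p^*\alpha^-\in J$ if and only if $p\alpha\in I$.
\item Likewise $\Hom_A(\alpha A,\delta A)$ has basis the nonzero paths beginning with $\delta$ and ending with $\alpha$. When $\delta=\alpha$, the choice $z=\alpha$ gives $1_{\alpha A}$, matching the trivial path at $v_\alpha$.
\end{itemize}
In particular, every such morphism is left multiplication by a path, so there are no ``non-naive'' maps and no graph-map combinatorics is needed.
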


\begin{proof}
For each \(\alpha\in R\), let \(\beta\) be its perfect successor.
The perfect-pair exact sequence
\[
0\longrightarrow\beta A \longrightarrow e_{t(\alpha)}A \xrightarrow{\lambda_\alpha}\alpha A \longrightarrow0, \quad \lambda_\alpha(x)=\alpha x,
\]
{provides} \(\Phi(\alpha^+\beta^-)=0\).
If \(r\) is any other minimal relation of \(A\), then \(\Phi(r^*)=0\).
{Therefore} \(J\subseteq\ker\Phi\), and \(\Phi\) induces an algebra homomorphism \(\overline{\Phi}:\mathbf{k}Q^\Gamma/J\to\End_A(G_A)\).

Fix \(x,y\in Q^\Gamma_0\).
For a path \(u\) from \(x\) to \(y\), prepend \(\delta^-\) if \(x=v_\delta\), append \(\alpha^+\) if \(y=v_\alpha\), and then contract every pair \(\eta^-\eta^+\) to \(\eta\).
Denote the resulting path in \(Q\) by \(c(u)\).
We claim that \(u\notin J\) if and only if \(c(u)\notin I\).

If \(u\in J\), then \(u\) contains either some \(\alpha^+\beta^-\) or some \(r^*\), so \(c(u)\) contains the corresponding relation of \(A\).
Conversely, suppose that \(c(u)\in I\).
{As} \(I\) is monomial, \(c(u)\) contains a minimal relation.
If this relation does not meet an arrow added at an endpoint of \(u\), then \(u\) contains the corresponding generator of \(J\).
Otherwise, Lemma~\ref{lem:perfect-arrow-endpoints} shows that it is a perfect-pair relation, so the corresponding \(\alpha^+\beta^-\) occurs in \(u\).
This proves the claim.

{As} \(J\) is monomial, the paths \(u\notin J\) from \(x\) to \(y\) form a basis of \(e_x(\mathbf{k}Q^\Gamma/J)e_y\).
We compare this basis with a path basis of \(\Hom_A(G_y,G_x)\).

First suppose that \(y=j\in Q_0\).
{As} \(\Hom_A(e_jA,G_x)\cong G_xe_j\), its standard basis consists of the nonzero paths ending at \(j\), and these paths must also begin with \(\delta\) when \(x=v_\delta\).
The correspondence \(u\mapsto c(u)\) identifies this basis with the paths \(u\notin J\) from \(x\) to \(y\).

Now suppose that \(y=v_\alpha\), and let \(\beta\) be the perfect successor of \(\alpha\).
A homomorphism \(f:\alpha A\to G_x\) is determined by \(z=f(\alpha)\in G_xe_{t(\alpha)}\), subject to \(z\beta=0\).
{As} \(A\) is monomial, this condition may be checked on the path basis.
By condition~\ref{P3}, the nonzero path-basis elements annihilated by \(\beta\) are precisely those ending with \(\alpha\); when \(x=v_\delta\), they must also begin with \(\delta\).
Thus \(u\mapsto c(u)\) again identifies these basis elements with the paths \(u\notin J\) from \(x\) to \(y\).
When \(x=y=v_\alpha\), the trivial path corresponds to \(1_{\alpha A}\).

Therefore \(\overline{\Phi}\) restricts to an isomorphism \(e_x(\mathbf{k}Q^\Gamma/J)e_y\cong\Hom_A(G_y,G_x)\) for every \(x,y\in Q^\Gamma_0\).
{Therefore} \(\overline{\Phi}\) is an algebra isomorphism.

It remains to verify the string conditions.
By construction, \(J\) is monomial and generated by paths of length at least two.
{As} \(\mathbf{k}Q^\Gamma/J\cong A^{\mathrm{CMA}}\) is finite-dimensional, \(J\) is admissible.
Splitting preserves the in- and out-degrees of every original vertex, while each new vertex has one incoming and one outgoing arrow.
Thus condition~\ref{S1} holds.

At an original vertex, the length-two paths in \(Q^\Gamma\) correspond to those in \(Q\).
By Lemma~\ref{lem:perfect-arrow-endpoints}, every quadratic relation containing a perfect arrow is the product of a perfect pair.
Consequently, the zero and nonzero continuations at the original vertices are precisely those inherited from \(A\), while at each new vertex there is only one possible continuation in either direction.
{Therefore} condition~\ref{S2} also holds.
\end{proof}

In particular, a quadratic full-relation cycle need not consist of perfect arrows.
If an arrow on the cycle has more than one zero continuation on one side, the uniqueness condition for a perfect pair fails.
Thus such a cycle may contribute no new vertex to \(Q^\Gamma\); see Example~\ref{ex:nonperfect-full-cycle}.

We now describe the algebraic data used to construct the surface in Corollary~\ref{cor:End-string-G}.
Let
\[
\Gamma=A^{\mathrm{CMA}}\cong\mathbf{k}Q^\Gamma/J,
\]
where \((Q^\Gamma,J)\) is the bound quiver constructed in Proposition~\ref{prop:perfect-arrow-splitting}, and fix sign functions \(\sigma_\Gamma\) and \(\varepsilon_\Gamma\) for \((Q^\Gamma,J)\).
Applying the \(S_T\)-segment construction of Definition~\ref{def:S-segment} to \((Q^\Gamma,J)\) yields a bound quiver \((Q_T,J_T)\) such that \(\End_A(T)\cong\mathbf{k}Q_T/J_T\).

For an arrow \([u]\) of \(Q_T\), represented by an \(S_T\)-segment \(u=a_1\cdots a_r\), set \(\sigma_T([u])=\sigma_\Gamma(a_1)\) and \(\varepsilon_T([u])=\varepsilon_\Gamma(a_r)\).
For a path \(q=[u_1]\cdots[u_s]\) in \(Q_T\), define \(\operatorname{exp}(q)=u_1\cdots u_s\).
By the definition of \(J_T\), one has \(q\in J_T\) if and only if \(\operatorname{exp}(q)\in J\).

The functions \(\sigma_T\) and \(\varepsilon_T\) are sign functions for \((Q_T,J_T)\).
Indeed, if two \(S_T\)-segments with the same source had the same first arrow, condition~\ref{S2} for \((Q^\Gamma,J)\) would force them to agree until the next vertex of \(S_T\), and {therefore} they would be equal.
Thus distinct arrows of \(Q_T\) with the same source have opposite \(\sigma_T\)-values.
The corresponding statement for \(\varepsilon_T\) follows dually.

Moreover, if \([u][v]\notin J_T\), then the last arrow \(a\) of \(u\) and the first arrow \(b\) of \(v\) satisfy \(ab\notin J\).
{Therefore} \(\sigma_T([v])=-\varepsilon_T([u])\).
Consequently, every composable pair satisfying \(\sigma_T([v])=\varepsilon_T([u])\) is a quadratic minimal relation of \(J_T\).

Let \(\mathcal R_T\) be the set of minimal monomial generators of \(J_T\).
Let \(\mathcal V_T\) consist of the relations \(r\in\mathcal R_T\) such that either \(\ell(r)\geqslant3\), or \(r=[u][v]\) and \(\sigma_T([v])=-\varepsilon_T([u])\).
Define \(J_T^{\mathrm{lg}} =\langle\mathcal R_T\setminus\mathcal V_T\rangle\).

\begin{lemma}\label{lem:compressed-minimal-relations}
One has \(J_T^{\mathrm{lg}} =\langle [u][v]\mid \sigma_T([v])=\varepsilon_T([u])\rangle\), where only composable pairs are included.
Consequently, \(\mathbf{k}Q_T/J_T^{\mathrm{lg}}\) is locally gentle.
Moreover, if \(r=[u_1]\cdots[u_m]\in\mathcal V_T\), then \(r\) is nonzero modulo \(J_T^{\mathrm{lg}}\) and \(\sigma_T([u_{i+1}])=-\varepsilon_T([u_i])\) for every \(i\).
\end{lemma}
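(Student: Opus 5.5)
The plan is to first determine \(\mathcal R_T\setminus\mathcal V_T\) exactly and then read off the remaining assertions. By definition, \(\mathcal V_T\) consists of the minimal relations of length at least three together with the quadratic minimal relations \([u][v]\) satisfying \(\sigma_T([v])=-\varepsilon_T([u])\). Hence \(\mathcal R_T\setminus\mathcal V_T\) is the set of quadratic minimal relations \([u][v]\) with \(\sigma_T([v])=\varepsilon_T([u])\). Conversely, we showed just before the lemma that every composable pair with \(\sigma_T([v])=\varepsilon_T([u])\) lies in \(J_T\). A path of length two in \(J_T\) is automatically minimal, since neither arrow \([u]\) lies in \(J_T\): \(u\) is an \(S_T\)-segment and hence not in \(J\). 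These two inclusions give the displayed description of \(J_T^{\mathrm{lg}}\).

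For local gentleness I will check conditions \ref{S1}, \ref{S2}, \ref{G1} and \ref{G2} for \((Q_T,J_T^{\mathrm{lg}})\), without requiring admissibility. Condition \ref{G2} is immediate from the description above. Condition \ref{S1} holds because \(\mathbf{k}Q_T/J_T\) is a string algebra by Proposition~\ref{prop:corner-string-compression}. For \ref{S2} and \ref{G1}, fix an arrow \([u]\) and consider the arrows \([v]\) with \(s([v])=t([u])\). There are at most two of them, and by the sign-function property they have distinct \(\sigma_T\)-values. Thus exactly one of them, if any, has \(\sigma_T([v])=\varepsilon_T([u])\); this composite lies in \(J_T^{\mathrm{lg}}\). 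The other composite is not in \(J_T^{\mathrm{lg}}\), because \(J_T^{\mathrm{lg}}\) is monomial and generated by quadratic paths, and the only quadratic paths in it are those with matching signs. This gives at most one zero and at most one nonzero continuation on the right. The left side is dual, using \(\varepsilon_T\).

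For the last assertion, let \(r=[u_1]\cdots[u_m]\in\mathcal V_T\). If \(m=2\), the sign condition \(\sigma_T([u_2])=-\varepsilon_T([u_1])\) holds by definition. If \(m\geqslant3\), then by minimality every subpath \([u_i][u_{i+1}]\) lies outside \(J_T\). By the criterion proved before the lemma (\([u][v]\notin J_T\) implies \(\sigma_T([v])=-\varepsilon_T([u])\)), we get \(\sigma_T([u_{i+1}])=-\varepsilon_T([u_i])\) for every \(i\). Consequently \(r\) contains no generator of \(J_T^{\mathrm{lg}}\). As \(J_T^{\mathrm{lg}}\) is a monomial ideal generated by length-two paths, \(r\notin J_T^{\mathrm{lg}}\), so \(r\) is nonzero modulo \(J_T^{\mathrm{lg}}\).

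The main obstacle is the minimality of the sign-matching quadratic relations. I will argue it directly: minimal monomial generators of length two are exactly the length-two paths in \(J_T\), because arrows are never in \(J_T\). So no real difficulty is expected. The remaining care is bookkeeping for loops: for a loop, the two arrows at a vertex may coincide, and I will check \ref{S2} and \ref{G1} with the same sign argument, which does not use distinctness of vertices.
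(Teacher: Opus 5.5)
Your proposal is correct and follows essentially the same route as the paper. You use the sign rule stated before the lemma to identify $\mathcal R_T\setminus\mathcal V_T$ with the equal-sign quadratic relations, deduce local gentleness from the opposite $\sigma_T$-values of distinct successors (dually, the $\varepsilon_T$-values of predecessors), and get the last assertion from minimality together with $[u][v]\notin J_T\Rightarrow\sigma_T([v])=-\varepsilon_T([u])$; your extra remarks (that quadratic paths in $J_T$ are automatically minimal generators, and the explicit checks of (G2) and (S1)) only spell out what the paper leaves implicit.
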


\begin{proof}
The preceding sign rule shows that every composable pair with equal endpoint signs is a 2-relation of \(J_T\).
Conversely, every relation in \(\mathcal R_T\setminus\mathcal V_T\) is quadratic and does not have opposite endpoint signs, so its endpoint signs are equal.
This proves the stated description of \(J_T^{\mathrm{lg}}\).

By Proposition~\ref{prop:corner-string-compression}, \(Q_T\) has at most two arrows starting or ending at each vertex.
For a fixed arrow, distinct successors have opposite \(\sigma_T\)-values.
Thus at most one successor has endpoint sign equal to its \(\varepsilon_T\)-value and at most one has the opposite sign.
The former composition belongs to \(J_T^{\mathrm{lg}}\), whereas the latter does not.
The dual argument applies to predecessors.
Therefore \(\mathbf{k}Q_T/J_T^{\mathrm{lg}}\) is locally gentle.

Let \(r=[u_1]\cdots[u_m]\in\mathcal V_T\).
If \(m=2\), then \(\sigma_T([u_2])=-\varepsilon_T([u_1])\) by the definition of \(\mathcal V_T\).
If \(m\geqslant3\), the minimality of \(r\) implies that every quadratic subpath \([u_i][u_{i+1}]\) is nonzero modulo \(J_T\).
The sign rule therefore {provides} \(\sigma_T([u_{i+1}])=-\varepsilon_T([u_i])\) for every \(i\).
In either case, no quadratic subpath of \(r\) belongs to \(J_T^{\mathrm{lg}}\), and {therefore} \(r\) is nonzero modulo this ideal.
\end{proof}

The sign functions determine the ordering data needed for the surface construction.
For each \(i\in(Q_T)_0\), introduce two formal endpoint incidences \(i^+\) and \(i^-\).
An arrow \(a:i\to j\) specifies that \(i^{\sigma_T(a)}\) immediately precedes \(j^{-\varepsilon_T(a)}\) in the clockwise order of a fan.
The sign conditions imply that every incidence has at most one predecessor and one successor.
{Therefore} the incidences decompose into linearly ordered and cyclic fans, with isolated incidences regarded as one-element linear fans.

Applying the surface construction of \cite[Definition~4.6]{PPP19}, linear fans determine boundary sectors, cyclic fans determine red punctures, and the complementary regions contain the white marked points.
The two incidences of a loop are kept distinct, even when they belong to the same fan.

Let \(r=a_1\cdots a_m\in\mathcal V_T\).
By Lemma~\ref{lem:compressed-minimal-relations}, the endpoint signs of each consecutive pair are opposite.
Thus \(r\) determines \(m+1\) successive incidences in one fan.
{As} \(J_T\) is admissible, every \(r\in\mathcal V_T\) has length at least two.
{Therefore} these \(m+1\) incidences form a label in the sense of Definition~\ref{def:label}.
Drawing a label along these incidences {provides} the relation of type \ref{R2} corresponding to \(r\).
For a cyclic fan, the incidences are read periodically, so repeated incidences and winding multiplicities are retained.

\begin{theorem}\label{thm:geometric-End}
Assume that \(A\) satisfies the G-condition.
For every nonzero basic \(T\in\mathsf{Gproj}(A)\), the preceding construction yields a labelled tiled surface \((\mathcal S_T,\mathsf P_T,\mathsf L_T)\) such that
\begin{equation}\label{eq:End-labelled-tiling}
A_{\mathsf P_T,\mathsf L_T}
\cong\mathbf{k}Q_T/J_T
\cong e_TA^{\mathrm{CMA}}e_T
\cong\End_A(T).
\end{equation}
More precisely:
\begin{enumerate}[label={\rm(\arabic*)}]
\item each arrow \([u]\) of \(Q_T\) is represented by a nonzero path \(u\) in \(Q^\Gamma\) whose endpoints lie in \(S_T\) and whose internal vertices lie outside \(S_T\);

\item a path \(q\) in \(Q_T\) is zero modulo \(J_T\) if and only if its expansion \(\operatorname{exp}(q)\) is zero in \(A^{\mathrm{CMA}}\);

\item the relations in \(\mathcal R_T\setminus\mathcal V_T\) are precisely the relations of type \ref{R1}, while those in \(\mathcal V_T\) are precisely the relations of type \ref{R2}.
\end{enumerate}
\end{theorem}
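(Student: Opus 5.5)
The chain of isomorphisms in \eqref{eq:End-labelled-tiling} splits into three links, and only the first is genuinely new. The link \(e_TA^{\mathrm{CMA}}e_T\cong\End_A(T)\) is Lemma~\ref{prop:End-corner}. By Proposition~\ref{prop:perfect-arrow-splitting} we may identify \(A^{\mathrm{CMA}}\) with \(\Gamma=\mathbf{k}Q^\Gamma/J\), a string algebra. Under this identification \(e_T\) becomes \(e=\sum_{v\in S_T}e_v\), since the vertices of \(Q^\Gamma\) index the summands \(G_x\) of \(G_A\). Proposition~\ref{prop:corner-string-compression} then gives \(e\Gamma e\cong\mathbf{k}Q_T/J_T\), where the arrows are the \(S_T\)-segments. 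This proves (1). It also proves (2), because by definition \(q\in J_T\) if and only if \(\operatorname{exp}(q)\in J\), and \(J\) is the kernel of \(\overline\Phi\). It remains to build \((\mathcal S_T,\mathsf P_T,\mathsf L_T)\) and show \(A_{\mathsf P_T,\mathsf L_T}\cong\mathbf{k}Q_T/J_T\) with the matching described in (3).

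For the surface, I use the fan data constructed above from \(\sigma_T,\varepsilon_T\). By Lemma~\ref{lem:compressed-minimal-relations}, \(\mathbf{k}Q_T/J_T^{\mathrm{lg}}\) is locally gentle, and its relations are exactly the composable pairs with \(\sigma_T([v])=\varepsilon_T([u])\). The construction of \cite[Definition~4.6]{PPP19} turns the linear and cyclic fans into boundary red points and red punctures. It produces a tiled surface \((\mathcal S_T,\mathsf P_T)\) with \(Q(\mathcal S_T,\mathsf P_T)=Q_T\): an arrow is an angle between consecutive incidences in a fan. The relations of type~\ref{R1} are then precisely the generators of \(J_T^{\mathrm{lg}}\), because two consecutive angles lie in one fan exactly when the signs are opposite. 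For each \(r\in\mathcal V_T\), Lemma~\ref{lem:compressed-minimal-relations} says the consecutive signs are opposite. Hence \(r\) traces \(\ell(r)+1\geqslant3\) consecutive incidences of a single fan, and I take this fan as a label. Its \ref{R2}-relation is exactly \(r\). Let \(\mathsf L_T\) be the set of these labels.

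Then \(I_{\mathsf P_T,\mathsf L_T}=\langle\mathcal R_T\setminus\mathcal V_T\rangle+\langle\mathcal V_T\rangle=\langle\mathcal R_T\rangle=J_T\), which gives (3) and the first isomorphism. Labels are non-redundant because elements of \(\mathcal R_T\) are minimal, so no element of \(\mathcal V_T\) contains another as a subpath.

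The step I expect to be the main obstacle is checking that the PPP19 construction really yields a finite labelled tiled surface meeting the required axioms. Three points need care.

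\begin{enumerate}
  \item A label is needed at every red puncture. A cyclic fan gives an oriented cycle of angles with nonzero consecutive compositions modulo \(J_T^{\mathrm{lg}}\). Since \(J_T\) is admissible, some power of that cycle lies in \(J_T\). Its minimal subrelation cannot be quadratic with equal signs, so it lies in \(\mathcal V_T\) and supplies a label at that puncture, read periodically with winding retained.
  \item Loops whose two incidences sit in the same fan must be tracked; I will keep both incidences distinct, as the construction prescribes.
  \item The construction must be applied componentwise when \(Q_T\) is disconnected.
\end{enumerate}

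For the first point I would argue directly from finite-dimensionality of \(\mathbf{k}Q_T/J_T\). The rest should follow from \cite[Theorem~3.1]{BC24}, applied with the locally gentle cover \(\mathbf{k}Q_T/J_T^{\mathrm{lg}}\) and the quotient \(J_T/J_T^{\mathrm{lg}}\) generated by \(\mathcal V_T\).
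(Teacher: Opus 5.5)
Your proposal is correct and follows essentially the same route as the paper. The corner isomorphisms from Lemma~\ref{prop:End-corner} and Proposition~\ref{prop:corner-string-compression} give (1) and (2); the locally gentle quotient \(\mathbf{k}Q_T/J_T^{\mathrm{lg}}\) from Lemma~\ref{lem:compressed-minimal-relations} is fed into the construction of \cite{PPP19}; the relations in \(\mathcal V_T\) become labels; and the label at each red puncture comes, as in the paper, from a minimal generator of \(J_T\) inside a power of the cycle around it (one trivial slip: \(J\) is the kernel of \(\Phi\), not of the injective map \(\overline{\Phi}\)).
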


\begin{proof}
By Lemma~\ref{lem:compressed-minimal-relations}, \(\mathbf{k}Q_T/J_T^{\mathrm{lg}}\) is locally gentle.
Applying the surface construction of \cite[Definition~4.6 and Theorem~4.10]{PPP19}, with the colour convention of Section~\ref{subsection: geo-module categories}, {provides} a tiled surface \((\mathcal S_T,\mathsf P_T)\).
Its dissection arcs are indexed by the vertices of \(Q_T\), and {therefore} by the indecomposable summands of \(T\).

Let \(r=a_1\cdots a_m\in\mathcal V_T\).
By Lemma~\ref{lem:compressed-minimal-relations}, \(r\) is nonzero modulo \(J_T^{\mathrm{lg}}\), and its consecutive arrows follow one fan.
It therefore determines a label consisting of \(m+1\) successive incidences.
For a cyclic fan, these incidences are read periodically.
Let \(\mathsf L_T\) be the set of labels obtained in this way.

We verify that every red puncture carries a label.
Let \(c\) be the nonzero oriented cycle determined by one period of the corresponding cyclic fan.
{As} \(\mathbf{k}Q_T/J_T\cong e_TA^{\mathrm{CMA}}e_T\) is finite-dimensional, \(c^N\in J_T\) for some \(N\geqslant1\).
Because every power of \(c\) is nonzero modulo \(J_T^{\mathrm{lg}}\), a minimal generator of \(J_T\) occurring in \(c^N\) cannot belong to \(\mathcal R_T\setminus\mathcal V_T\).
It therefore belongs to \(\mathcal V_T\) and {provides} a label at this puncture.

The relations of type \ref{R1} generate \(J_T^{\mathrm{lg}}\), while the labels in \(\mathsf L_T\) {provide} exactly the relations in \(\mathcal V_T\) of type \ref{R2}.
{Therefore} $I_{\mathsf P_T,\mathsf L_T} =J_T^{\mathrm{lg}}+\langle\mathcal V_T\rangle =\langle\mathcal R_T\rangle =J_T.$ Therefore \(A_{\mathsf P_T,\mathsf L_T}\cong\mathbf{k}Q_T/J_T\).
Proposition~\ref{prop:corner-string-compression} {provides} \(\mathbf{k}Q_T/J_T\cong e_TA^{\mathrm{CMA}}e_T\), and Lemma~\ref{prop:End-corner} {provides} \(e_TA^{\mathrm{CMA}}e_T\cong\End_A(T)\).

Assertion~(1) follows from the \(S_T\)-segment construction, assertion~(2) from the definition of \(J_T\), and assertion~(3) from Lemma~\ref{lem:compressed-minimal-relations}.
\end{proof}

Let \(\mathcal D\) be a Gorenstein-projective support collection, and let \((T_{\mathcal D},P_{\mathcal D})\) be the corresponding support \(\tau\)-tilting pair under Proposition~\ref{thm:GP-support-tau-geometry}.
The indecomposable summands of \(T_{\mathcal D}\) are represented by the permissible clockwise-most \(\rpoint\)-arcs in \(\mathcal D\), whereas the arcs in \(\mathcal D\cap\mathsf P\) represent the summands of \(P_{\mathcal D}\).
Thus Theorem~\ref{thm:geometric-End} applies to \(T_{\mathcal D}\) whenever \(T_{\mathcal D}\neq0\); the projective component \(P_{\mathcal D}\) does not contribute additional summands to \(T_{\mathcal D}\).

\begin{remark} \rm \label{rem:two-surfaces-End}
Two geometric descriptions play different roles.
On the fixed labelled surface \((\mathcal S,\mathsf P,\mathsf L)\) of \(A\), Theorem~\ref{thm:homs} and the geometric interpretation following Theorem~\ref{thm:geometric-hom} {provide} a geometric graph-map basis of each \(\Hom_A(T_j,T_i)\).
Together with Lemma~\ref{thm:bound-quiver-End}, these maps determine the arrow space \(\rad_A(T_j,T_i)/\rad_{\add T}^{2}(T_j,T_i)\).
Choosing a basis of this quotient {provides} the arrows \(i\to j\), and the relations are obtained by composing their representatives.

By contrast, Theorem~\ref{thm:geometric-End} constructs a labelled tiled surface \((\mathcal S_T,\mathsf P_T,\mathsf L_T)\) whose labelled tiling algebra is isomorphic to \(\End_A(T)\).
This surface depends on the chosen sign functions and need not be a subsurface of \(\mathcal S\).
Only the isomorphism class of the algebra in \eqref{eq:End-labelled-tiling} is canonical; the topology of the surface and its red punctures may depend on these choices \cite{XZ26}.
\end{remark}

\begin{corollary}\label{cor:uniform-string-condition}
For a string algebra \(A\), the following are equivalent:
\begin{enumerate}[label={\rm(\arabic*)}]
\item \(A\) satisfies the G-condition; \item \(\End_A(T)\) is a string algebra for every nonzero basic Gorenstein-projective \(A\)-module \(T\).
\end{enumerate}
\end{corollary}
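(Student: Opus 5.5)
The implication (1)\(\Rightarrow\)(2) is immediate from Corollary~\ref{cor:End-string-G} (or Theorem~\ref{thm:geometric-End}). The content lies in (2)\(\Rightarrow\)(1), and I would prove it by contraposition. We may take \(T=G_A\), so that \(\End_A(T)=A^{\mathrm{CMA}}\), and then Lemma~\ref{thm:CMA-string-G} settles the question directly. Indeed, if \(A\) fails the G-condition, then \(A^{\mathrm{CMA}}=\End_A(G_A)\) is not a string algebra. Since \(G_A\) is nonzero, basic and Gorenstein-projective, (2) fails.

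The only point to check is what ``is a string algebra'' means. It should mean that the algebra admits some bound quiver presentation satisfying (S1)--(S3). Lemma~\ref{thm:CMA-string-G} must be read with the same meaning, as an isomorphism-invariant statement, and this matches \cite[Corollary~4.2(2)]{LZ24}. Applied to \(\End_A(G_A)\), the lemma then gives exactly the failure of (2).

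If one prefers not to rely on the ``only if'' half of Lemma~\ref{thm:CMA-string-G} as a black box, I would sketch it as follows. Failure of the G-condition gives a perfect path \(p\) of length at least \(2\), since \(\perpath(A)\subseteq Q_1\) is equivalent to the G-condition. Write \(p=a_1\cdots a_k\) and let \(q\) be its perfect successor. Then \(pA\cong M(w_p)\) by Lemma~\ref{lem:pA-string}, and there are radical maps \(e_{s(p)}A\hookleftarrow pA\) and \(pA\twoheadleftarrow e_{t(p)}A\). There are also the maps given by left multiplication by \(a_1\), by \(a_1a_2\), and so on. Among the composites \(e_{t(p)}A\to pA\to e_{s(p)}A\), which equal left multiplication by \(p\), one should be able to exhibit a commutativity relation: the composite through \(pA\) equals a composite through the intermediate vertices \(t(a_1),\ldots\). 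Unless that path factors as a relation, this yields a nonmonomial relation in the Gabriel quiver presentation of Lemma~\ref{thm:bound-quiver-End}.

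The main obstacle in this direct route is showing that the relation is genuinely nonmonomial under every choice of arrow representatives. A monomial algebra has a presentation in which every relation is monomial, so I would argue intrinsically. In \(A^{\mathrm{CMA}}\), the space \(\Hom(e_{t(p)}A,e_{s(p)}A)\) contains the element \(p\). This element factors through two different sequences of irreducible maps whose corresponding arrows in \(Q_{G_A}\) are distinct. That contradicts the fact that in a string algebra each nonzero element of \(e_x\Gamma e_y\) in the radical layer is spanned by paths with unique factorization. For the written proof, however, I would rely on Lemma~\ref{thm:CMA-string-G} and keep the argument to the two-line contrapositive above.
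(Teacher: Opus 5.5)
Your proof is correct and follows the paper's argument: (1)$\Rightarrow$(2) is Corollary~\ref{cor:End-string-G}, and for (2)$\Rightarrow$(1) you take $T=G_A$, note that $\End_A(G_A)=A^{\mathrm{CMA}}$, and apply Lemma~\ref{thm:CMA-string-G}. The optional direct sketch is unnecessary, and its final unique-factorization step is only heuristic as written, but the two-line contrapositive you intend to write down is sufficient.
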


\begin{proof}
The implication \((1)\Rightarrow(2)\) is Corollary~\ref{cor:End-string-G}.
Conversely, take \(T=G_A\).
Then \(\End_A(G_A)=A^{\mathrm{CMA}}\), so Lemma~\ref{thm:CMA-string-G} {provides} the G-condition.
\end{proof}

Even without the G-condition, if \(e_TA^{\mathrm{CMA}}e_T\cong\End_A(T)\) is a string algebra for a fixed \(T\), then Theorem~\ref{thm:string-labelled-tiling} provides a labelled tiled-surface model for this particular endomorphism algebra.
Thus Corollary~\ref{cor:uniform-string-condition} characterizes the uniform assertion for all Gorenstein-projective modules.
No analogous converse is claimed when \(T\) is restricted to Gorenstein-projective support \(\tau\)-tilting modules.
Example~\ref{ex:tau-tilting-nonmonomial} shows only that, outside the G-condition, modules in this restricted class may have endomorphism algebras which are not string algebras.

Let \((\mathcal S_\Gamma,\mathsf P_\Gamma,\mathsf L_\Gamma)\) be the labelled tiled surface associated with \(\Gamma=A^{\mathrm{CMA}}\) and the chosen sign functions.
Every nonzero direct path in \(\Gamma\) follows successive angles in one fan.
{Therefore} an \(S_T\)-segment \(u\) corresponds to a fan interval whose internal dissection arcs are indexed by the summands of \(G_A\) not contained in \(T\).
On \((\mathcal S_T,\mathsf P_T,\mathsf L_T)\), the compressed arrow \([u]\) replaces this interval by a single angle.
This is the geometric meaning of path compression.

Under the G-condition, Proposition~\ref{prop:band-criterion} and Theorem~\ref{thm:geometric_modules} {provide}
\begin{equation}\label{eq:End-band-test}
\begin{aligned}
\End_A(T)\text{ is representation-finite}
\quad\Longleftrightarrow\quad&
(\mathcal S_T,\mathsf P_T,\mathsf L_T)
\text{ has no permissible closed curve}\\
&\text{meeting }\mathsf P_T\text{ at least twice}.
\end{aligned}
\end{equation}

\subsection{Stable endomorphism algebras}

For \(A\)-modules \(X\) and \(Y\), write
\[ \underline{\Hom}_A(X,Y) =\Hom_A(X,Y)/\mathcal P_A(X,Y),
   \quad \underline{\End}_A(X) =\underline{\Hom}_A(X,X), \]
where \(\mathcal P_A(X,Y)\) consists of the morphisms which factor through projective \(A\)-modules.

Let \(M\) be a basic \(\tau\)-rigid module over a string algebra \(A\), and write \(M=M_{\mathrm{np}}\oplus P\), where \(P\) is projective and \(M_{\mathrm{np}}\) has no projective direct summand.
{Auslander--Reiten duality \cite[Chapter~IV]{ARS95} identifies $D\Ext_A^1(M,M)$ with a quotient of $\Hom_A(M,\tau M)$.}
{The $\tau$-rigidity of $M$ therefore implies $\Ext_A^1(M,M)=0$.}
{As} every string algebra is special biserial, \cite[Theorem~1.1]{SZ03} implies that \(\underline{\End}_A(M_{\mathrm{np}})\) is gentle whenever \(M_{\mathrm{np}}\neq0\).
{As} projective modules are zero objects in the stable category, $\underline{\End}_A(M) \cong\underline{\End}_A(M_{\mathrm{np}})$.
If \(M_{\mathrm{np}}=0\), this algebra is zero.
Under the G-condition, the stable endomorphism algebras of Gorenstein-projective modules admit the following more precise description.

The following consequence can also be deduced from \cite[Proposition~5.9]{CSZ18}.
We include a direct proof using the corner presentation.

\begin{corollary}\label{prop:stable-End-semisimple-G}
Assume the G-condition.
Let \(T\in\mathsf{Gproj}(A)\) be basic and have exactly \(r\) non-projective indecomposable direct summands.
Then
\[
\underline{\End}_A(T)\cong\mathbf{k}^{\,r},
\]
where \(\mathbf{k}^{\,0}\) denotes the zero algebra.
No \(\tau\)-rigidity hypothesis is required.
\end{corollary}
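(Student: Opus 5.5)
The plan is to compute \(\underline{\End}_A(T)\) inside the presentation of \(A^{\mathrm{CMA}}\) given by Proposition~\ref{prop:perfect-arrow-splitting}, and then pass to the corner \(e_TA^{\mathrm{CMA}}e_T\) via Lemma~\ref{prop:End-corner}. Under the G-condition every perfect path is an arrow. The non-projective indecomposable summands of \(T\) are therefore of the form \(\alpha A=G_{v_\alpha}\) for certain perfect arrows \(\alpha\in R\), and the projective summands are \(G_i=e_iA\) with \(i\in Q_0\). If \(r=0\), then \(T\) is projective and the identity factors through a projective, so \(\underline{\End}_A(T)=0\). Assume from now on that \(r\geqslant1\).

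First, I would use that a morphism factoring through any projective module factors through \(\add A\). Hence \(\mathcal P_A(G_y,G_x)\) consists of the maps factoring through some \(e_iA\) with \(i\in Q_0\). Under \(\overline{\Phi}\), the sum of the maps factoring through \(e_iA\) for a single \(i\) is the image of \(e_x\Gamma e_i\Gamma e_y\). So \(\mathcal P_A\) restricted to \(\add G_A\) corresponds to the two-sided ideal \(\Gamma e_0\Gamma\) with \(e_0=\sum_{i\in Q_0}e_i\), and \(\underline{\End}_A(T)\cong e_T(\Gamma/\Gamma e_0\Gamma)e_T\).

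Next comes the key combinatorial observation. In \(Q^\Gamma\) each new vertex \(v_\alpha\) has exactly one incoming arrow \(\alpha^-\) and one outgoing arrow \(\alpha^+\), and both are incident with original vertices. Consequently every path of positive length which starts or ends at some \(v_\alpha\) passes through a vertex of \(Q_0\), and so does every path touching an original vertex at all. Therefore every nonzero path basis element of \(e_T\Gamma e_T\) lies in \(\Gamma e_0\Gamma\), except the trivial paths \(e_{v_\alpha}\) at the vertices of the non-projective summands. Since \(J\) is monomial, the path basis descends to the quotient. Hence \(e_T(\Gamma/\Gamma e_0\Gamma)e_T\) is spanned by the \(r\) orthogonal idempotents \(e_{v_\alpha}\). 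Their pairwise products vanish except \(e_{v_\alpha}^2=e_{v_\alpha}\), so the algebra is a quotient of \(\mathbf{k}^{\,r}\).

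The remaining step, which I expect to be the only real point to check, is that no \(e_{v_\alpha}\) dies. This means showing that \(1_{\alpha A}\) does not factor through a projective module. If it did, \(\alpha A\) would be a direct summand of a projective module and hence projective, contradicting Theorem~\ref{thm:perfect-path-GP}. Because the quotient is spanned by orthogonal idempotents, a single surviving idempotent cannot be killed by combinations of the others. The \(r\) idempotents thus remain linearly independent and nonzero, which gives \(\underline{\End}_A(T)\cong\mathbf{k}^{\,r}\). No \(\tau\)-rigidity enters the argument; only the shape of \(Q^\Gamma\) near the new vertices is used.
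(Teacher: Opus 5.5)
Your proposal is correct and follows essentially the same route as the paper. You identify $\mathcal P_A(T,T)$ with $e_T\Gamma e_0\Gamma e_T$ inside the perfect-arrow splitting presentation and use that every arrow of $Q^\Gamma$ has an endpoint in $Q_0$, so only the idempotents $e_{v_\alpha}$ survive in $e_T(\Gamma/\Gamma e_0\Gamma)e_T$. Your explicit check that $e_{v_\alpha}\notin\Gamma e_0\Gamma$, via the non-projectivity of $\alpha A$, is valid; the paper leaves this step implicit in the monomial path basis.
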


\begin{proof}
{Let} \(\Gamma=\End_A(G_A)\), and let \(e_0\) be the sum of the primitive idempotents corresponding to the projective modules \(e_iA\).
Under the isomorphism \(\End_A(T)\cong e_T\Gamma e_T\) of Lemma~\ref{prop:End-corner}, the ideal of endomorphisms of \(T\) which factor through projective modules corresponds to $\mathcal P_A(T,T)=e_T\Gamma e_0\Gamma e_T$.
Indeed, every finitely generated projective \(A\)-module belongs to \(\add A\), so this ideal includes factorizations through projective modules not contained in \(\add T\).

In the bound quiver of Proposition~\ref{prop:perfect-arrow-splitting}, every arrow has at least one endpoint at an original vertex \(i\in Q_0\).
Consequently, quotienting by \(\Gamma e_0\Gamma\) removes all original vertices and all arrows, leaving only the vertices \(v_\alpha\), \(\alpha\in{\perpath(A)}\).
{Therefore}
\[
\Gamma/\Gamma e_0\Gamma \cong \prod_{\alpha\in{\perpath(A)}}\mathbf{k}.
\]
It follows that
\[
\underline{\End}_A(T) \cong \frac{e_T\Gamma e_T} {e_T\Gamma e_0\Gamma e_T} \cong e_T\bigl(\Gamma/\Gamma e_0\Gamma\bigr)e_T \cong \mathbf{k}^{\,r}.
\]
\end{proof}

Taking \(T=G_A\) {provides} $\underline{\End}_A(G_A) \cong \mathbf{k}^{\,|{\perpath(A)}|}$.
{Here $\underline{\mathsf{Gproj}}(A)$ denotes the quotient of $\mathsf{Gproj}(A)$ by the ideal of morphisms which factor through projective modules.}
{The image of $G_A$ is an additive generator of this stable category, and the usual additive-generator equivalence identifies it with the category of finitely generated projective modules over $\underline{\End}_A(G_A)$, up to the harmless opposite-algebra convention determined by sidedness.}
{Since this stable endomorphism algebra is semisimple, its category of finitely presented functors is semisimple.}
{Thus \(A\) is an \(\Omega_{\mathcal G}\)-algebra in the terminology of \cite{H21}.}
This is consistent with the semisimplicity criterion in \cite[Proposition~5.9]{CSZ18} and with the framework of \(\Omega_{\mathcal G}\)-algebras in \cite{H21}.

\section{Representation-finiteness}

We apply the preceding constructions to representation-finiteness, regarding the zero algebra as representation-finite.

\begin{lemma}\label{lem:repfinite-corner}
Let \(\Gamma\) be a finite-dimensional representation-finite algebra and let \(e\in\Gamma\) be an idempotent.
Then \(e\Gamma e\) is representation-finite.
\end{lemma}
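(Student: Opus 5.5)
The plan is to use the standard idempotent-functor argument relating $\operatorname{mod}\Gamma$ and $\operatorname{mod}e\Gamma e$. If $e=0$, then $e\Gamma e$ is the zero algebra, which is representation-finite by convention; so assume $e\neq0$. Consider the functors
\[
F=-\otimes_\Gamma\Gamma e\colon\operatorname{mod}\Gamma\to\operatorname{mod}e\Gamma e,\qquad
G=-\otimes_{e\Gamma e}e\Gamma\colon\operatorname{mod}e\Gamma e\to\operatorname{mod}\Gamma .
\]
For right modules, $F(X)\cong Xe$. We aim to show that every indecomposable $e\Gamma e$-module is of the form $F(Y)$ for some indecomposable $\Gamma$-module $Y$. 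The finitely many isoclasses of indecomposable $\Gamma$-modules then produce only finitely many isoclasses of indecomposable $e\Gamma e$-modules.

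The first key step is the identity $FG\cong\mathrm{id}$. For $N\in\operatorname{mod}e\Gamma e$,
\[
FG(N)=N\otimes_{e\Gamma e}e\Gamma e\cong N,
\]
naturally in $N$. Note that $G(N)$ is finitely generated, since $e\Gamma$ is a finitely generated left $e\Gamma e$-module and $N$ is finite-dimensional.

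The second step handles an indecomposable $N$. Decompose $G(N)=\bigoplus_k Y_k$ into indecomposable $\Gamma$-modules, which is possible by Krull--Schmidt. Since $F$ is additive,
\[
N\cong F G(N)\cong\bigoplus_k Y_ke .
\]
Because $N$ is indecomposable and $e\Gamma e$ is finite-dimensional, Krull--Schmidt gives exactly one $k$ with $Y_ke\neq0$, and for that $k$ we have $N\cong Y_ke$. Hence $N\cong F(Y_k)$, so $N$ is determined up to isomorphism by the isoclass of $Y_k$.

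Counting isoclasses then finishes the argument, since $|\operatorname{ind}(e\Gamma e)|\leqslant|\operatorname{ind}(\Gamma)|<\infty$. I do not expect a genuine obstacle here; the only care needed is to check $FG\cong\mathrm{id}$ with the correct sidedness for right modules. As an alternative route, I could note that $F$ is full on objects of the form $G(N)$, so $\End_\Gamma(G(N))\to\End_{e\Gamma e}(N)$ is surjective. That would show $G(N)$ itself is indecomposable, but the direct-sum argument above already suffices.
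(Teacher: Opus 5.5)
Your proof is correct. It relies on the same identity $(N\otimes_{e\Gamma e}e\Gamma)e\cong N$ as the paper's proof, but you draw the conclusion from it differently.

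The paper regards induction $-\otimes_{e\Gamma e}e\Gamma$ as left adjoint to restriction $(-)e$. It reads the identity as saying that the unit of this adjunction is an isomorphism, and deduces that induction is fully faithful. Hence $\End_{e\Gamma e}(N)\cong\End_\Gamma(N\otimes_{e\Gamma e}e\Gamma)$, so induction preserves indecomposability and reflects isomorphisms, giving an injection $\operatorname{ind}(e\Gamma e)\hookrightarrow\operatorname{ind}(\Gamma)$.

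You bypass the adjunction. Decomposing the induced module and applying the additive functor $(-)e$ shows that restriction maps the indecomposable $\Gamma$-modules $Y$ with $Ye\neq0$ onto $\operatorname{ind}(e\Gamma e)$, which is all the counting needs. Your step ``exactly one $Y_ke\neq0$'' uses only that $N$ is indecomposable, not Krull--Schmidt uniqueness.

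Your route is more elementary. The paper's route yields more: a fully faithful embedding $\operatorname{mod}e\Gamma e\to\operatorname{mod}\Gamma$ under which each indecomposable $N$ lifts to an indecomposable $\Gamma$-module with the same endomorphism ring.

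Your closing alternative aims at that stronger statement, but as stated it is incomplete. Surjectivity of $\End_\Gamma(G(N))\to\End_{e\Gamma e}(N)$ alone does not force $\End_\Gamma(G(N))$ to be local; a surjection onto a local ring can have a non-local source. You also need injectivity, which follows either from the adjunction or from the fact that your $G(N)$ is generated by $G(N)e$.
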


\begin{proof}
Set \(B=e\Gamma e\).
For right modules, the functor
\[
F=-\otimes_B e\Gamma: \operatorname{mod}B\longrightarrow\operatorname{mod}\Gamma
\]
is left adjoint to \(G=(-)e\).
For every \(X\in\operatorname{mod}B\), one has
\[
(FX)e \cong X\otimes_B e\Gamma e \cong X.
\]
{Therefore} the unit of the adjunction is an isomorphism, and \(F\) is fully faithful.
In particular,
\[
\End_B(X)\cong\End_\Gamma(FX).
\]
Thus \(F\) preserves indecomposability and reflects isomorphisms.
It therefore induces an injection from the isomorphism classes of indecomposable \(B\)-modules to those of indecomposable \(\Gamma\)-modules.
{As} \(\Gamma\) is representation-finite, so is \(B=e\Gamma e\).
\end{proof}

The next lemma compares the bands of \(A\) and \(A^{\mathrm{CMA}}\) using the bound quiver constructed in Proposition~\ref{prop:perfect-arrow-splitting}.

\begin{lemma}\label{lem:band-splitting-contraction}
Assume that \(A\) satisfies the G-condition, and {let} \(\Gamma=A^{\mathrm{CMA}}\).
Let \(w\) be a band of \(A\).
Replacing every occurrence of a perfect arrow \(\alpha\) in \(w\) by \(\alpha^-\alpha^+\), and every occurrence of \(\alpha^{-1}\) by \((\alpha^+)^{-1}(\alpha^-)^{-1}\), produces a band \(w^*\) of \(\Gamma\).

Conversely, every band of \(\Gamma\) is obtained in this way: contracting each pair \(\alpha^-\alpha^+\), or its inverse, to \(\alpha\), or \(\alpha^{-1}\), produces a band of \(A\).
These operations induce mutually inverse bijections between the band classes of \(A\) and those of \(\Gamma\).
\end{lemma}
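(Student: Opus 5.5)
The plan is to work directly with the explicit bound quiver \((Q^\Gamma,J)\) of Proposition~\ref{prop:perfect-arrow-splitting}. The contraction map \(c\) from its proof, which turns each \(\alpha^-\alpha^+\) into \(\alpha\), is extended to walks that may use inverse letters. Two local facts drive everything.

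\emph{First}, each new vertex \(v_\alpha\) has exactly one incoming arrow \(\alpha^-\) and one outgoing arrow \(\alpha^+\). The string condition \(l_{i+1}\neq l_i^{-1}\) then leaves only two options for any letter of a string of \(\Gamma\) that enters \(v_\alpha\) in the interior: it must continue as \(\alpha^-\alpha^+\), or as the inverse \((\alpha^+)^{-1}(\alpha^-)^{-1}\).

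\emph{Second}, by the claim established in the proof of Proposition~\ref{prop:perfect-arrow-splitting}, a path \(u\) of \(Q^\Gamma\) satisfies \(u\notin J\) if and only if \(c(u)\notin I\).

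For the forward direction, let \(w\) be a band of \(A\) and form \(w^*\). Each maximal direct or inverse subword of every power \((w^*)^m\) is of the form \(u^*\), or a subword of some \(u^*\), for a direct subword \(u\) of \(w^m\). Since \(u\notin I\), the second fact gives \(u^*\notin J\); for a subword cut at a new vertex, the same claim with the endpoint adjustment applies. Hence \((w^*)^m\) is a string for every \(m\). Cyclicity is clear, and the condition \(l_{i+1}\neq l_i^{-1}\) at the junctions is inherited from \(w\). Primitivity also transfers: if \(w^*=z^k\) with \(k\geq 2\), then \(z\) cannot cut through the middle of an \(\alpha^-\alpha^+\) pair. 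Indeed, a cyclic rotation of \(w^*\) starting at \(v_\alpha\) still has its pairs aligned, so by the first fact \(z\) is itself of the form \(y^*\). It follows that \(w=y^k\), a contradiction.

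For the converse, let \(b\) be a band of \(\Gamma\). The first fact shows that \(b\), read cyclically, decomposes into letters of three kinds: unchanged arrows, pairs \(\alpha^-\alpha^+\), and their inverses. This uses that a band passes through \(v_\alpha\) only in its interior, which holds after a suitable rotation, since a band cannot consist solely of a new vertex. Contraction therefore produces a cyclic walk \(c(b)\) in \(Q\). Its direct subwords \(u\) satisfy \(u\notin I\) by the second fact, and the same holds for all powers. Primitivity and compatibility with rotation and inversion follow as before, and the two operations are visibly mutually inverse on letters.

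The main obstacle is the string condition on the contracted word, specifically the requirement \(l_{i+1}\neq l_i^{-1}\) and the exclusion of zero relations that straddle a perfect arrow. If \(b\) contains \(\alpha^+\) followed by \(\beta^-\) with \(\beta\) the perfect successor, this product lies in \(J\) by \ref{E1}, so it never occurs. Because the G-condition forces \(\perpath(A)\subseteq Q_1\), Lemma~\ref{lem:perfect-arrow-endpoints} shows that every minimal relation of \(A\) meeting a perfect arrow is either \(\alpha\beta\) or of type \ref{E2}. This is exactly what makes the correspondence between zero relations of \(A\) and of \(\Gamma\) faithful on powers of bands. I would check this step carefully at the periodic junctions, where a relation could wrap around the band.
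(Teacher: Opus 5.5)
Your proposal is correct and follows essentially the same route as the paper: expansion and contraction of letters, the observation that each $v_\alpha$ is met only through $\alpha^-\alpha^+$ or its inverse, comparison of zero relations via the generators \ref{E1}--\ref{E2} (equivalently the claim $u\notin J\Leftrightarrow c(u)\notin I$), and primitivity by rotating to an original vertex. The step you flag as the main obstacle is immediate. A cancellation $\alpha\alpha^{-1}$ in the contracted word would come from $\alpha^+(\alpha^+)^{-1}$ in $b$, and relations wrapping around the band are already covered because you work with all powers.
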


\begin{proof}
Set \(R={\perpath(A)}\), and use the bound quiver \(\Gamma\cong\mathbf{k}Q^\Gamma/J\) constructed in Proposition~\ref{prop:perfect-arrow-splitting}.
For a letter \(c\) in a string of \(A\), define
\[
c^*=
\begin{cases}
\alpha^-\alpha^+,
& c=\alpha\in R,\\
(\alpha^+)^{-1}(\alpha^-)^{-1},
& c=\alpha^{-1}\text{ with }\alpha\in R,\\
c,
& \text{otherwise}.
\end{cases}
\]
For a word \(w=c_1\cdots c_s\), {let} \(w^*=c_1^*\cdots c_s^*\).

Each \(c^*\) has the same endpoints as \(c\).
Moreover, two consecutive expanded blocks cancel precisely when the corresponding letters of \(w\) cancel.
{Therefore} \(w\) is reduced if and only if \(w^*\) is reduced.

By Proposition~\ref{prop:perfect-arrow-splitting}, a perfect-pair relation \(\alpha\beta\), with \(\alpha,\beta\in R\), is replaced by \(\alpha^+\beta^-\), whereas every other minimal relation is replaced by its full expansion.
The same holds for the inverse words.
Consequently, for every \(n\geqslant1\), the word \(w^n\) is a string over \(A\) if and only if \((w^*)^n\) is a string over \(\Gamma\).
Thus expansion sends every cyclic string whose positive powers are strings to a cyclic string with the same property.

Conversely, let \(z\) be such a cyclic string over \(\Gamma\).
The new vertex \(v_\alpha\) is incident only with \(\alpha^-\) and \(\alpha^+\).
{As} \(z\) is reduced, every visit to \(v_\alpha\) occurs through \(\alpha^-\alpha^+\) or \((\alpha^+)^{-1}(\alpha^-)^{-1}\).
Contract these subwords to \(\alpha\) and \(\alpha^{-1}\), respectively.
The resulting cyclic word \(\overline z\) is reduced, and the preceding comparison of relations shows that all its positive powers are strings over \(A\).
By construction, \((\overline z)^*=z\) up to cyclic permutation, and \(\overline{w^*}=w\).
{Therefore} expansion and contraction are inverse on these cyclic strings.

It remains to check primitivity.
Expansion preserves proper powers because \((u^m)^*=(u^*)^m\).
Conversely, suppose that \(w^*=u^m\) for some \(m\geqslant2\).
{As} no arrow of \(Q^\Gamma\) joins two new vertices, the cyclic word \(u\) passes through an original vertex.
After a cyclic permutation, we may assume that \(u\) begins at such a vertex.
It can then be contracted without cutting a pair \(\alpha^-\alpha^+\), and we obtain \(w=(\overline u)^m\) up to cyclic permutation.
Thus \(w^*\) is a proper power if and only if \(w\) is a proper power.

Both expansion and contraction commute with cyclic permutation and inversion.
Therefore they induce the stated mutually inverse bijections between band classes.
\end{proof}

\begin{theorem}\label{thm:rep-finite-G-condition}
Let \(A\) be a string algebra satisfying the G-condition.
Then the following statements are equivalent:
\begin{enumerate}[label={\rm(\arabic*)}]
\item \(A\) is representation-finite; \item \(A^{\mathrm{CMA}}\) is representation-finite; \item \(\End_A(T)\) is representation-finite for every basic Gorenstein-projective \(A\)-module \(T\); \item \(\End_A(T)\) is representation-finite for every basic Gorenstein-projective support \(\tau\)-tilting \(A\)-module \(T\).
\end{enumerate}
\end{theorem}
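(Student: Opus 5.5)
I will prove the cycle of implications (1)$\Rightarrow$(2)$\Rightarrow$(3)$\Rightarrow$(4)$\Rightarrow$(1). For (1)$\Rightarrow$(2), I use that $A$ is a string algebra, so by Proposition~\ref{prop:band-criterion} it has no band. Under the G-condition, $\Gamma=A^{\mathrm{CMA}}$ is a string algebra by Lemma~\ref{thm:CMA-string-G}, with the explicit bound quiver of Proposition~\ref{prop:perfect-arrow-splitting}. Lemma~\ref{lem:band-splitting-contraction} gives a bijection between the band classes of $A$ and those of $\Gamma$, so $\Gamma$ has no band either. Proposition~\ref{prop:band-criterion} then shows that $\Gamma$ is representation-finite. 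The same bijection gives (2)$\Rightarrow$(1) directly, although the cycle does not need it.

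For (2)$\Rightarrow$(3), let $T$ be a basic Gorenstein-projective module. If $T=0$, then $\End_A(T)$ is the zero algebra, which is representation-finite by convention. Otherwise, Lemma~\ref{prop:End-corner} gives $\End_A(T)\cong e_TA^{\mathrm{CMA}}e_T$, and Lemma~\ref{lem:repfinite-corner} shows that this corner is representation-finite. The implication (3)$\Rightarrow$(4) is immediate, because a Gorenstein-projective support $\tau$-tilting module is in particular a basic Gorenstein-projective module.

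The substantive step is (4)$\Rightarrow$(1). I need a Gorenstein-projective support $\tau$-tilting module $T$ whose endomorphism algebra controls the representation type of $A$. The natural choice is $T=A_A$. It is projective, hence Gorenstein-projective, and it is $\tau$-rigid because $\tau A=0$. Together with $P=0$ we get $|T|+|P|=|Q_0|$, so $(A,0)$ is a basic support $\tau$-tilting pair once $A_A$ is taken basic, which it is for a bound quiver algebra. Its endomorphism algebra is $\End_A(A_A)\cong A$. Hypothesis (4) therefore says exactly that $A$ is representation-finite, which is (1).

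Given the earlier lemmas, I do not expect a genuine obstacle. The only points to check are that $A_A$ is basic and that $\End_A(A_A)\cong A$ holds with the right-module convention rather than only up to opposite algebra. The latter is harmless, because $A^{\mathrm{op}}$ is representation-finite if and only if $A$ is, by the duality $D=\Hom_{\mathbf k}(-,\mathbf k)$.
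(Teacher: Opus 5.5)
Your proposal is correct and follows essentially the same route as the paper. The paper uses the band bijection of Lemma~\ref{lem:band-splitting-contraction} together with Proposition~\ref{prop:band-criterion} for (1)$\Leftrightarrow$(2), the corner isomorphism with Lemma~\ref{lem:repfinite-corner} for (2)$\Rightarrow$(3), and the choice $T=A_A$ with $\End_A(A_A)\cong A$ for (4)$\Rightarrow$(1).
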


\begin{proof}
Set \(\Gamma=A^{\mathrm{CMA}}\).
By Lemma~\ref{thm:CMA-string-G}, \(\Gamma\) is a string algebra.
Proposition~\ref{prop:band-criterion} shows that a string algebra is representation-finite if and only if it has no bands.
By Lemma~\ref{lem:band-splitting-contraction}, \(A\) and \(\Gamma\) have the same bands.
{Therefore} \((1)\Longleftrightarrow(2)\); this also recovers \cite[Theorem~4.3]{LZ24}.

Assume (2).
The assertion is immediate for \(T=0\).
If \(T\neq0\), Lemma~\ref{prop:End-corner} {provides} \(\End_A(T)\cong e_T\Gamma e_T\).
{As} every corner of a representation-finite algebra is representation-finite by Lemma~\ref{lem:repfinite-corner}, condition~(3) follows.
The implication \((3)\Longrightarrow(4)\) is immediate.

Finally, assume (4).
The right regular module \(A_A\) is basic, projective and \(\tau\)-tilting.
In particular, it is a Gorenstein-projective support \(\tau\)-tilting module.
{As} \(\End_A(A_A)\cong A\), condition (4) implies (1).
\end{proof}

Combining Theorem~\ref{thm:rep-finite-G-condition} with \eqref{eq:End-band-test} {provides} the following geometric reformulation.
Under the G-condition, \(A\) is representation-finite if and only if, for every nonzero basic Gorenstein-projective support \(\tau\)-tilting module \(T\), the labelled surface associated with \(T\) has no permissible closed curve meeting its dissection at least twice.
For the reverse implication, take \(T=A_A\).

Without the G-condition, only partial conclusions are available.
If \(A\) is representation-finite and \(T\) is a basic projective module, then \(T\cong eA\) for some idempotent \(e\), and {therefore} \(\End_A(T)\cong eAe\) is representation-finite by Lemma~\ref{lem:repfinite-corner}.
More generally, if \(A^{\mathrm{CMA}}\) is representation-finite, then Lemma~\ref{prop:End-corner} and Lemma~\ref{lem:repfinite-corner} show that \(\End_A(T)\) is representation-finite for every basic Gorenstein-projective module \(T\).
In particular, this covers the CM-free case.

However, a representation-finite string algebra may have a representation-infinite Cohen--Macaulay Auslander algebra \cite[Remark~4.4]{LZ24}.
This does not answer the question for support \(\tau\)-tilting modules, because the module \(G_A\) used to define \(A^{\mathrm{CMA}}\) is not required to be support \(\tau\)-tilting.
This leaves the following problem.

\begin{problem}\label{prob:unrestricted}
Let \(A\) be a representation-finite string algebra.
Is \(\End_A(T)\) representation-finite for every basic Gorenstein-projective support \(\tau\)-tilting \(A\)-module \(T\)?
\end{problem}

The stable quotient does not settle this problem.
Even if \(\underline{\End}_A(T)\) is representation-finite, representation-finiteness need not lift from this quotient to \(\End_A(T)\).
One must also control the ideal \(\mathcal P_A(T,T)\) of maps factoring through projective modules, including projectives not belonging to \(\add T\).
Moreover, Example~\ref{ex:tau-tilting-nonmonomial} shows that, outside the G-condition, \(\End_A(T)\) need not be a string algebra even when \(T\) is Gorenstein-projective and \(\tau\)-tilting.
{Therefore} the band criterion is not available in general.

\section{Examples}

{We conclude with examples illustrating the preceding results.}

\begin{example}\rm \label{ex:nongentle-G}
Let $A=\mathbf{k}Q/I$, where
        \[Q=\xymatrix{5 \ar[r]^x & 4 \ar[r]^y & 3 \ar[r]^z
          & 2 \ar@/^0.5pc/[r]^{\beta}
          & 1 \ar@/^0.5pc/[l]^{\alpha}},
          \quad I=\langle \alpha\beta, \beta\alpha, xyz\rangle\]
%\[
%Q=\begin{tikzcd}[column sep=2.8em,ampersand replacement=\&]
%5\arrow[r,"x"] \& 4\arrow[r,"y"] \& 3\arrow[r,"z"] \&
%2\arrow[r,bend left=18,"\beta"] \& 1\arrow[l,bend left=18,"\alpha"]
%\end{tikzcd},
%\quad
%I=\langle\alpha\beta,\beta\alpha,xyz\rangle.
%\]
The algebra $A$ is not gentle because $xyz$ is a minimal relation of length three.
Its only relation cycle supporting a perfect path sequence is the gentle cycle determined by $\alpha$ and $\beta$.
{Therefore} $A$ satisfies the G-condition, and
\[
\perpath(A)=\{\alpha,\beta\}, \quad \alpha A\cong S_2, \quad \beta A\cong S_1.
\]
There is no band: the tail cannot occur in a reduced cyclic string, while both orientations of the two-cycle, and their inverses, contain a relation.
Thus $A$ is representation-finite.

{Let} $P_i=e_iA$ and
\[
T=S_2\oplus P_2\oplus P_3\oplus P_4\oplus P_5.
\]
The perfect successor of $\alpha$ is $\beta$, and right multiplication by $\beta$ {provides} a surjection $Te_2\to Te_1$: every nonzero path in a summand $P_i$, $i\neq1$, ending at $1$ ends in $\beta$, while $S_2e_1=0$.
Proposition~\ref{prop:perfect-successor-tau} therefore {provides} $\Hom_A(T,\tau S_2)=0$.
{As} the other summands are projective, $T$ is $\tau$-rigid.
It has five pairwise non-isomorphic indecomposable summands, so it is a Gorenstein-projective $\tau$-tilting module.
The corresponding five vertices form a clique of size $|Q_0|$ in $\mathcal G_A$, and its maximal good collection is
\[
\{\gamma_{S_2}^{\circlearrowright}, \gamma_{P_2}^{\circlearrowright},\ldots, \gamma_{P_5}^{\circlearrowright}\}.
\]

Figure~\ref{fig:example-6-1-surface-and-T} shows the labelled
tiled surface associated with $A$ and the permissible
clockwise-most arcs representing the indecomposable summands of $T$.

\begin{figure}[htbp]
	\centering
	\begin{tikzpicture}[scale=.48,>=stealth]
		\foreach \s in {-8,8} {
			\begin{scope}[shift={(\s,0)}]
				\coordinate (p) at (180:6);
				\coordinate (a) at (120:6);
				\coordinate (b) at (60:6);
				\coordinate (c) at (0:6);
				\coordinate (q) at (-60:6);
				\coordinate (w5) at (150:6);
				\coordinate (w4) at (90:6);
				\coordinate (w3) at (30:6);
				\coordinate (w2) at (-30:6);
				\coordinate (w1) at (-120:6);
				\coordinate (r) at (0,-4.2);
				\draw[black,line width=1.5pt] (0,0) circle (6);
				\draw[red,line width=1pt] (p)--(a);
				\draw[red,line width=1pt] (p)--(b);
				\draw[red,line width=1pt] (p)--(c);
				\draw[red,line width=1pt] (p)--(q);
				\draw[red,line width=1pt] (p)
				.. controls (-4.8,-4.8) and (0,-6) .. (q);
				
				% The label crosses 5,4,3,2, reading xyz.
				\begin{scope}[shift={(p)}]
					\draw[blue,line width=1.2pt,->]
					(60:2.25) arc[start angle=60,end angle=-30,radius=2.25];
				\end{scope}
				
				\ifnum\s<0
				% Vertex labels on the dissection arcs.
				\node[red,font=\scriptsize] at (-3.4,3.5) {$x_5$};
				\node[red,font=\scriptsize] at (0,3.95) {$x_4$};
				\node[red,font=\scriptsize] at (3.1,.38) {$x_3$};
				\node[red,font=\scriptsize] at (-.2,-2.92) {$x_2$};
				\node[red,font=\scriptsize] at (-2,-4.5) {$x_1$};
				\node[blue,font=\scriptsize] at (-3.15,1.0) {$xyz$};
				\begin{scope}[shift={(p)}]
					\foreach \u/\v in {60/30,30/0,0/-30}
					\draw[dark-green,line width=1.2pt,->]
					(\u:1.12) arc[start angle=\u,end angle=\v,radius=1.12];
					\draw[dark-green,line width=1.2pt,->]
					(-30:1.15) arc[start angle=-30,end angle=-73,radius=1.15];
				\end{scope}
				\node[dark-green,font=\scriptsize] at (-4.85,1.45) {$x$};
				\node[dark-green,font=\scriptsize] at (-4.35,.52) {$y$};
				\node[dark-green,font=\scriptsize] at (-4.38,-.52) {$z$};
				\node[dark-green,font=\scriptsize] at (-4.7,-1.42) {$\beta$};
				\begin{scope}[shift={(q)}]
					\draw[dark-green,line width=1.2pt,->]
					(195:1.0) arc[start angle=195,end angle=150,radius=1.0];
				\end{scope}
				\node[dark-green,font=\scriptsize] at (1.5,-4.75) {$\alpha$};
				
				\else
				% White-circle representatives; the crossing words are checked.
				% P5: 5,4,3 (xy). ?? ???????
				\draw[violet,line width=1.2pt] (w5)
				to (w2);
				% P4: 4,3,2,1 (yz beta). ?? ??
				\draw[purple,line width=1.2pt] (w4) to (w1);
				% P3: 3,2,1 (z beta). ?? ??
				\draw[purple!75,line width=1.2pt, bend right=10] (w3) to (w1);
				% P2: 2,1 (beta). ?? ?????????
				\draw[purple!55,line width=1.2pt, bend right=20] (w2) to (w1);
				% S2: 2 (the trivial string at 2). ?? ??
				\draw[teal!75!black,line width=1.2pt, bend right=15] (r) to (w2);
				\node[violet,font=\scriptsize] at (-3.85,2.73) {$P_5$};
				\node[purple,font=\scriptsize] at (.2,4.7) {$P_4$};
				\node[purple!75,font=\scriptsize] at (3.55,2.7) {$P_3$};
				\node[purple!55,font=\scriptsize] at (2,-2.5) {$P_2$};
				\node[teal!75!black,font=\scriptsize] at (2.2,-3.65) {$S_2$};
				
				\fi
				
				% Draw all marked points last.
				\foreach \u in {p,a,b,c,q}
				\draw[red,fill=red,thick] (\u) circle (.16);
				\foreach \u in {w5,w4,w3,w2,w1,r}
				\draw[black,fill=white,thick] (\u) circle (.16);
			\end{scope}
		}
	\end{tikzpicture}
	\caption{The labelled tiled surface of $A$ in Example \ref{ex:nongentle-G} (left)
		and the permissible $\circ$-arcs representing
		$T=S_2\oplus P_2\oplus P_3\oplus P_4\oplus P_5$ (right).}
	\label{fig:example-6-1-surface-and-T}
\end{figure}

The Cohen--Macaulay Auslander quiver is obtained by splitting $\alpha$ and $\beta$ through the vertices $v_\alpha$ and $v_\beta$.
Its relations are
\[
xyz,\quad \alpha^+\beta^-,\quad \beta^+\alpha^-;
\]
For the corner determined by $T$, the selected vertices are $S_T=\{v_\alpha,2,3,4,5\}$.
The path from $2$ through the omitted vertices $v_\beta,1$ to $v_\alpha$ contains $\beta^+\alpha^-$ and is zero.
{Therefore} it {provides} no compressed arrow.
The surviving segments are $x,y,z,\alpha^+$, and
\begin{equation}\label{eq:example-long-label-End}
\End_A(T)\cong
\mathbf{k}\left(
5\xrightarrow{x}4\xrightarrow{y}3\xrightarrow{z}2
\xleftarrow{h}v_\alpha\right)/\langle xyz\rangle,
\quad h=[\alpha^+].
\end{equation}
Here $h$ represents the quotient $P_2\twoheadrightarrow S_2$.
A path basis consists of the five vertex idempotents, the four arrows $x,y,z,h$, and the two paths $xy,yz$.
Thus $\dim_{\mathbf{k}}\End_A(T)=11$.
The quiver in \eqref{eq:example-long-label-End} has no band, so $\End_A(T)$ is representation-finite, in accordance with Theorem~\ref{thm:rep-finite-G-condition}.
Moreover, $T$ has exactly one non-projective indecomposable summand, and {therefore}
\[
\underline{\End}_A(T)\cong\mathbf{k}
\]
by Proposition~\ref{prop:stable-End-semisimple-G}.

Choose $\sigma_T=1$ on every arrow, $\varepsilon_T(x)=\varepsilon_T(y)=\varepsilon_T(z)=-1$, and $\varepsilon_T(h)=1$.
These signs are inherited from the Cohen--Macaulay Auslander quiver by taking all $\sigma_\Gamma=1$, $\varepsilon_\Gamma(\alpha^+)=\varepsilon_\Gamma(\beta^+)=1$, and $\varepsilon_\Gamma=-1$ on the remaining arrows.
Consequently,
\[
J_T^{\mathrm{lg}}=0, \quad \mathcal V_T=\{xyz\}.
\]
Thus the relation $xyz$ is imposed by a label.
The corresponding fan interval is $(5,4,3,2)$, while the angle from $v_\alpha$ to $2$ {provides} the arrow $h$.
Figure~\ref{fig:example-End-surface} displays the resulting labelled tiled surface.

\begin{figure}[htbp]
\centering
\begin{tikzpicture}[scale=0.94,>=stealth]
\draw[thick] (0,0) circle (2.3);
\coordinate (p) at (180:2.3);
\coordinate (a) at (120:2.3);
\coordinate (b) at (60:2.3);
\coordinate (c) at (0:2.3);
\coordinate (q) at (-60:2.3);
\coordinate (v) at (-120:2.3);
\draw[red,thick] (p)--(a) node[pos=0.60,right,black] {$5$};
\draw[red,thick] (p)--(b) node[pos=0.57,above,black] {$4$};
\draw[red,thick] (p)--(c) node[pos=0.60,above,black] {$3$};
\draw[red,thick] (p)--(q) node[pos=0.54,below,black] {$2$};
\draw[red,thick] (q)--(v) node[midway,above,black] {$v_\alpha$};
\begin{scope}[shift={(p)}]
                    \draw[blue,very thick,->]
(60:0.93) arc[start angle=60,end angle=-30,radius=0.93];
\node[blue] at (1.42,0.60) {$xyz$};
\end{scope}
\foreach \ang in {180,120,60,0,-60,-120}
\fill[red] (\ang:2.3) circle (2.5pt);
\foreach \ang in {150,90,30,-30,-90,-150}
\draw[black,fill=white] (\ang:2.3) circle (2.5pt);
\end{tikzpicture}
\caption{The labelled tiled surface for the endomorphism algebra in \eqref{eq:example-long-label-End}.}
\label{fig:example-End-surface}
\end{figure}

For comparison, let $U=A\oplus S_2$; no $\tau$-rigidity is asserted for $U$.
In the corresponding corner only $v_\beta$ is omitted.
The nonzero segment $\beta^-\beta^+$ becomes an arrow $b:2\to1$, and the bound quiver of $\End_A(U)$ is obtained from $Q$ by splitting $\alpha$ alone.
Its relations are
\[
xyz,\quad \alpha^+b,\quad b\alpha^-.
\]
The last two relations expand to paths containing $\alpha^+\beta^-$ and $\beta^+\alpha^-$, respectively.
This {provides} a nontrivial instance of path compression through an omitted vertex.
\end{example}

\begin{remark} \rm
\label{ex:nonperfect-full-cycle}
Let
\[
        Q = \xymatrix{1\ar@/^0.5pc/[r]^a & 2 \ar@/^0.5pc/[l]^b \ar[r]^c & 3},
        \quad I=\langle ab, ac , {ba}\rangle
        \]
%        \[
%Q=\begin{tikzcd}[column sep=3em,ampersand replacement=\&]
%1\arrow[r,bend left=18,"a"] \&
%2\arrow[l,bend left=18,"b"]\arrow[r,"c"] \&3
%\end{tikzcd},
%\quad
%I=\langle ab,ac,ba\rangle.
%\]
Although $ab$ is a full-relation oriented cycle, $(a,b)$ is not a perfect pair because both $ab$ and $ac$ are zero.
The pair $(b,a)$ is perfect, but $a$ has no perfect successor, so it belongs to no perfect path sequence.
{Therefore} $\perpath(A)=\varnothing$, the G-condition holds vacuously, and $A^{\mathrm{CMA}}\cong A$.

For $T=A$, choose
\[
\begin{array}{c|rrr}
&a&b&c\\ \hline
\sigma&1&1&-1\\
\varepsilon&1&1&-1
\end{array}
\]
as sign functions.
Then
\[
J_T^{\mathrm{lg}}=\langle ab,ba\rangle, \quad \mathcal V_T=\{ac\}.
\]
Thus $ac$ is a quadratic relation of type \textup{(R2)}, whereas $ab$ and $ba$ are relations of type \textup{(R1)}.
In particular, a quadratic relation need not be a tile relation, and an arrow must not be split merely because it lies on a {full-relation oriented cycle}.
\end{remark}

\begin{example}[A nonmonomial endomorphism algebra outside the
G-condition] \rm
\label{ex:tau-tilting-nonmonomial}
\label{ex:mixed-length-relation}
\label{ex:unrestricted-obstruction}
Fix $m\geqslant1$.
Let $Q$ be the oriented three-cycle
\begin{center}
        \begin{tikzpicture}[scale=0.54, baseline=-0.2 cm]
             \draw (2,0) node {$2,$} (-1,1.73) node{$1$} (-1,-1.73) node{$3$};
            \foreach \x in {0,120,240}
            \draw[rotate=-10+\x][->] (-1,1.73) arc(120:20:2);
            \draw[rotate=60+  0] (2.3,0) node{$x$};
            \draw[rotate=60+240] (2.3,0) node{$y$};
            \draw[rotate=60+120] (2.3,0) node{$z$};
        \end{tikzpicture}
        $\quad A_m=\mathbf{k}Q/R_Q^{\,3m+1}$.
\end{center}
%       \[
%\begin{tikzcd}[column sep=3.2em,row sep=2.4em,ampersand replacement=\&]
%1\arrow[r,"x"]\&2\arrow[d,"y"]\\
%\&3\arrow[ul,"z"]
%\end{tikzcd},
%\quad
%A_m=\mathbf{k}Q/R_Q^{\,3m+1}.
%\]
The algebra $A_m$ is a self-injective Nakayama string algebra, so all its modules are Gorenstein-projective.
Every nonzero path of positive length is perfect: the successor of a path of length $d$ is the unique path of length $3m+1-d$ beginning at its endpoint.
These successors form a permutation of the nonzero paths, and the prefix and suffix conditions follow from the uniqueness of oriented continuation.
{Therefore} the G-condition fails because perfect paths of length greater than one occur.

{Let} $P_i=e_iA_m$, $C=xyz$, and
\[
T_m=S_1\oplus P_1\oplus P_3.
\]
The path $p=C^m$ has perfect successor $x$ and satisfies $pA_m\cong S_1$.
Notice that $M(p)\cong P_1$, whereas $pA_m\cong M(e_1)$.

Right multiplication by $x$ {provides} a surjection $T_me_1\to T_me_2$.
It is trivial on the $S_1$-summand; on $P_1$ its images are $x,Cx,\ldots,C^{m-1}x$, and on $P_3$ they are $zx,zCx,\ldots,zC^{m-1}x$.
Proposition \ref{prop:perfect-successor-tau} {provides} $\Hom_{A_m}(T_m,\tau S_1)=0$.
{As} $P_1$ and $P_3$ are projective, $T_m$ is $\tau$-rigid.
It has three indecomposable summands and is therefore a Gorenstein-projective $\tau$-tilting module.

Consider the maps
\[
u:P_1\twoheadrightarrow S_1, \quad v:S_1\longrightarrow P_1,\quad {v(1)=C^m},
\]
\[
a:P_1\longrightarrow P_3,\quad {a(e_1)=z}, \quad b:P_3\longrightarrow P_1,\quad {b(e_3)=xy}.
\]
A path-basis calculation shows that $B_m=\End_{A_m}(T_m)$ has basis consisting of the three identities, $u,v$, and
\[
(ba)^j, (ab)^j\quad(1\leqslant j\leqslant m), \quad a(ba)^j, b(ab)^j\quad(0\leqslant j<m).
\]
In particular, $\dim_{\mathbf{k}}B_m=4m+5$.
With the usual composition of maps,
\[
uv=ub=av=0, \quad vu=(ba)^m\neq0,
\]
because both $vu$ and $(ba)^m$ send $e_1$ to $C^m$.
Consequently,
\begin{equation}\label{eq:nakayama-End-presentation}
B_m\cong
\mathbf{k}Q_B/
\langle [u][v],\ [u][b],\ [a][v],\
[v][u]-([b][a])^m\rangle,
\end{equation}
where
\[
Q_B=\xymatrix@C=4em{
S_1 \ar@/^/[r]^{[u]} &
P_1 \ar@/^/[l]^{[v]} \ar@/^/[r]^{[b]} &
P_3 \ar@/^/[l]^{[a]} }.
\]
Indeed, the binomial relation replaces $[v][u]$ by $([b][a])^m$, while the other three displayed relations eliminate every remaining composable pair involving $[u]$ or $[v]$.
Moreover,
\[
([b][a])^m[b]=[v][u][b]=0, \quad [a]([b][a])^m=[a][v][u]=0.
\]
It follows that the quotient in \eqref{eq:nakayama-End-presentation} is spanned by the paths corresponding to the $4m+5$ basis maps above.
Their images are linearly independent, proving the presentation and its admissibility.

The algebra $B_m$ admits no monomial presentation.
Fix its primitive idempotents.
{Here a monomial presentation means an admissible presentation on the Gabriel quiver of \(B_m\). The four arrow spaces in \(\rad B_m/\rad^2B_m\) are one-dimensional, so every choice of arrows has lifts of the form described below.}
{The displayed basis of $B_m$ also yields}
\begin{align*}
 & {e_{P_3}\rad^2(B_m)e_{P_1}=\operatorname{span}_{\mathbf k}\{a(ba)^j\mid 1\leqslant j<m\},} \\
 & {e_{P_1}\rad^2(B_m)e_{P_3}=\operatorname{span}_{\mathbf k}\{b(ab)^j\mid 1\leqslant j<m\}.}
\end{align*}
{The corresponding radical-square correction spaces for the arrows $u$ and $v$ are zero.}
Any arrow lifts of $u$ and $v$ are nonzero scalar multiples of these maps, whereas arbitrary lifts of $a$ and $b$ have the form
\[
a'=c_a a+\sum_{j=1}^{m-1}c_j a(ba)^j, \quad b'=c_b b+\sum_{j=1}^{m-1}d_j b(ab)^j, \quad c_ac_b\neq 0.
\]
{Therefore}
\[
(b'a')^m=(c_ac_b)^m(ba)^m\neq0,
\]
{Indeed, $(ba)^{m+1}=0$, and, after rewriting, every term containing at least one higher-radical correction is a scalar multiple of $(ba)^r$ for some $r\geqslant m+1$ and therefore vanishes.}
{Consequently, this element is proportional to $v'u'$ for any lifts $u',v'$.}
The corresponding paths are distinct, whereas distinct nonzero paths in a monomial algebra are linearly independent.
{By the standard conjugacy theorem for complete sets of primitive orthogonal idempotents, any other such set is conjugate to the chosen one, up to permutation, so this obstruction is intrinsic.}
Therefore $B_m$ is not a string algebra.

For $m=1$, the relation is homogeneous and $[v][u]=[b][a]\neq0$.
For $m>1$, the relation $[v][u]-([b][a])^m$ involves nonzero paths of lengths $2$ and $2m$.
In particular, $m=2$ shows that paths of different lengths must be considered together when computing $\ker\Phi_T$ in Lemma~\ref{thm:bound-quiver-End}.
Finally,
\[
\underline{\End}_{A_m}(T_m)\cong\mathbf{k},
\]
because the projective summands vanish in the stable category and the identity of $S_1$ does not factor through a projective module.
Thus the ordinary endomorphism algebra can be nonmonomial even when its stable quotient is semisimple.
{As} $A_m$ is a Nakayama algebra, it is representation-finite; the nonmonomiality of $B_m$ alone does not decide its representation type and therefore does not settle Problem~\ref{prob:unrestricted}.
\end{example}

\paragraph{Competing Interests}
The authors declare that they have no conflicts of interest as defined by the journal, nor any other interests that could be perceived as influencing the results presented in this paper.

\paragraph{Authors' Contributions}
The order of authors is alphabetical, and all authors contributed equally to the conception, methodology, derivation, and writing of this paper.

\paragraph{Ethical Approval}
This article does not require ethical approval.

\paragraph{Acknowledgements}
We would like to express our gratitude to the Guizhou Provincial Key Laboratory of Applied Mathematics and Computing Power \& Algorithms for providing us with an academic discussion laboratory.

%=========================================================

%\addcontentsline{toc}{section}{References}

% 按文章中出现顺序排列参考文献：
%   \bibliographystyle{elsarticle-num}  % 作者姓全拼、名缩写, 数字标签, 书本名正体
%   \bibliographystyle{unsrt} % 作者姓名全拼, 数字标签, 书本名斜体
% 按字母顺序排列参考文献：
%  \bibliographystyle{plain} % 作者姓名全拼, 数字标签, 书本名正体
%  \bibliographystyle{abbrv} % 作者姓全拼、名缩写, 数字标签, 书本名斜体
%  \bibliographystyle{siam}  % 作者姓全拼、名缩写 (姓名使用textsc格式), 数字标签, 书本名斜体
%  \bibliographystyle{alpha} % 作者姓名全拼, 作者标签, 书本名斜体
%  \bibliographystyle{apalike}  % 作者姓全拼、名缩写, 美国心理学学会期刊样式

% \bibliography{referLiu}

%\nocite{LZhou24}
\bibliographystyle{alpha}
\bibliography{Endomorphism_Algebras}

@article{AIR14,
  author  = {Adachi, Takahide and Iyama, Osamu and Reiten, Idun},
  title   = {{$\tau$}-tilting theory},
  journal = {Compositio Mathematica},
  volume  = {150},
  number  = {3},
  pages   = {415--452},
  year    = {2014},
  doi     = {10.1112/S0010437X13007422},
  note    = {\url{https://doi.org/10.1112/S0010437X13007422}}
}

@article{AS87,
  author  = {Assem, Ibrahim and Skowro{\'n}ski, Andrzej},
  title   = {Iterated tilted algebras of type {$\widetilde{A}_n$}},
  journal = {Mathematische Zeitschrift},
  volume  = {195},
  number  = {2},
  pages   = {269--290},
  year    = {1987},
  doi     = {10.1007/BF01166463},
  note    = {\url{https://doi.org/10.1007/BF01166463}}
}

@book{ARS95,
  author    = {Auslander, Maurice and Reiten, Idun and Smal{\o}, Sverre O.},
  title     = {Representation Theory of Artin Algebras},
  series    = {Cambridge Studies in Advanced Mathematics},
  volume    = {36},
  publisher = {Cambridge University Press},
  address   = {Cambridge},
  year      = {1995},
  doi       = {10.1017/CBO9780511623608},
  note      = {\url{https://doi.org/10.1017/CBO9780511623608}}
}

@article{BC21,
  author  = {Baur, Karin and Coelho Sim{\~o}es, Raquel},
  title   = {A geometric model for the module category of a gentle algebra},
  journal = {International Mathematics Research Notices. IMRN},
  volume  = {2021},
  number  = {15},
  pages   = {11357--11392},
  year    = {2021},
  doi     = {10.1093/imrn/rnz150},
  note    = {\url{https://doi.org/10.1093/imrn/rnz150}}
}

@article{BC21-corr,
  author  = {Baur, Karin and Coelho Sim{\~o}es, Raquel},
  title   = {Corrigendum: A geometric model for the module category of a gentle algebra},
  journal = {International Mathematics Research Notices. IMRN},
  volume  = {2023},
  number  = {7},
  pages   = {6291--6298},
  year    = {2023},
  doi     = {10.1093/imrn/rnac368},
  note    = {\url{https://doi.org/10.1093/imrn/rnac368}}
}

@misc{BC24,
  author       = {Baur, Karin and Coelho Sim{\~o}es, Raquel},
  title        = {A geometric model for the module category of a string algebra},
  year         = {2024},
  eprint       = {2403.07810v1},
  archiveprefix = {arXiv},
  url          = {https://arxiv.org/abs/2403.07810v1},
  note         = {\url{https://arxiv.org/abs/2403.07810v1}}
}

@article{BR87,
  author  = {Butler, Michael C. R. and Ringel, Claus Michael},
  title   = {Auslander--Reiten sequences with few middle terms and applications to string algebras},
  journal = {Communications in Algebra},
  volume  = {15},
  number  = {1--2},
  pages   = {145--179},
  year    = {1987},
  doi     = {10.1080/00927878708823416},
  note    = {\url{https://doi.org/10.1080/00927878708823416}}
}

@article{C26,
  author  = {Chang, Wen},
  title   = {Geometric models for the algebraic hearts in the derived category of a gentle algebra},
  journal = {Israel Journal of Mathematics},
  volume  = {271},
  pages   = {941--1000},
  year    = {2026},
  doi     = {10.1007/s11856-025-2801-7},
  note    = {\url{https://doi.org/10.1007/s11856-025-2801-7}}
}

@article{CSZ18,
  author  = {Chen, Xiao-Wu and Shen, Dawei and Zhou, Guodong},
  title   = {The {G}orenstein-projective modules over a monomial algebra},
  journal = {Proceedings of the Royal Society of Edinburgh Section A: Mathematics},
  volume  = {148},
  number  = {6},
  pages   = {1115--1134},
  year    = {2018},
  doi     = {10.1017/S0308210518000185},
  note    = {\url{https://doi.org/10.1017/S0308210518000185}}
}

@article{CB89,
  author  = {Crawley-Boevey, William},
  title   = {Maps between representations of zero-relation algebras},
  journal = {Journal of Algebra},
  volume  = {126},
  number  = {2},
  pages   = {259--263},
  year    = {1989},
  doi     = {10.1016/0021-8693(89)90304-9},
  note    = {\url{https://doi.org/10.1016/0021-8693(89)90304-9}}
}

@article{CL2019,
  title={{C}ohen-{M}acaulay {A}uslander algebras of gentle algebras},
  author={Chen, Xinhong  and  Lu, Ming},
  journal={Commun. Algebra},
  fjournal={Communications in Algebra},
  volume={47},
  number={9},
  pages={3597--3613},
  year={2019},
  note={\href{https://doi.org/10.1080/00927872.2019.1570225}{DOI:10.1080/00927872.2019.1570225}},
}

@article{CL2017,
  title={{C}ohen-{M}acaulay {A}uslander algebras of skewed-gentle algebras},
  author={Chen, Xinhong and Lu, Ming},
  journal={Commun. Algebra},
  fjournal={Communications in Algebra},
  volume={45},
  number={2},
  pages={849--865},
  year={2017},
  publisher={Taylor \& Francis},
  note={\href{https://doi.org/10.1080/00927872.2016.1175601}{DOI:10.1080/00927872.2016.1175601}
        (preprint in 2015, \href{https://arxiv.org/abs/1502.03948}{arXiv:1502.03948})},
}

@misc{H21,
  author       = {Hafezi, Rasool},
  title        = {When stable {C}ohen--{M}acaulay {A}uslander algebra is semisimple},
  year         = {2021},
  eprint       = {2109.00467v2},
  archiveprefix = {arXiv},
  url          = {https://arxiv.org/abs/2109.00467v2},
  note         = {\url{https://arxiv.org/abs/2109.00467v2}}
}

@article{K15,
  author  = {Kalck, Martin},
  title   = {Singularity categories of gentle algebras},
  journal = {Bulletin of the London Mathematical Society},
  volume  = {47},
  number  = {1},
  pages   = {65--74},
  year    = {2015},
  doi     = {10.1112/blms/bdu093},
  note    = {\url{https://doi.org/10.1112/blms/bdu093}}
}

@misc{LZH,
  author = {Liu, Yu-Zhe and Zhang, Yafeng and Huang, Zhaoyong},
  title  = {{G}orenstein projective support {$\tau$}-tilting modules over gentle algebras},
  year   = {2022},
  howpublished = {Preprint},
  note   = {\url{http://maths.nju.edu.cn/~huangzy/gtautilt.pdf}},
  url    = {http://maths.nju.edu.cn/~huangzy/gtautilt.pdf}
}

@article{LZ24,
  author  = {Liu, Yu-Zhe and Zhang, Chao},
  title   = {On string algebras and the {C}ohen--{M}acaulay {A}uslander algebras},
  journal = {Applied Categorical Structures},
  volume  = {32},
  pages   = {Paper No. 17},
  year    = {2024},
  doi     = {10.1007/s10485-024-09779-8},
  note    = {\url{https://doi.org/10.1007/s10485-024-09779-8}},
  addendum = {Numbered statements are cited from arXiv:2303.06645v1}
}

@misc{LZhou24,
  author       = {Liu, Yu-Zhe and Zhou, Panyue},
  title        = {On endomorphism algebras of string almost gentle algebras},
  year         = {2024},
  eprint       = {2411.03690v1},
  archiveprefix = {arXiv},
  url          = {https://arxiv.org/abs/2411.03690v1},
  note         = {\url{https://arxiv.org/abs/2411.03690v1}}
}

@article{PPP19,
  author  = {Palu, Yann and Pilaud, Vincent and Plamondon, Pierre-Guy},
  title   = {Non-kissing and non-crossing complexes for locally gentle algebras},
  journal = {Journal of Combinatorial Algebra},
  volume  = {3},
  number  = {4},
  pages   = {401--438},
  year    = {2019},
  doi     = {10.4171/JCA/35},
  note    = {\url{https://doi.org/10.4171/JCA/35}}
}

@article{SZ03,
  author  = {Schr{\"o}er, Jan and Zimmermann, Alexander},
  title   = {Stable endomorphism algebras of modules over special biserial algebras},
  journal = {Mathematische Zeitschrift},
  volume  = {244},
  number  = {3},
  pages   = {515--530},
  year    = {2003},
  doi     = {10.1007/s00209-003-0492-4},
  note    = {\url{https://doi.org/10.1007/s00209-003-0492-4}}
}

@article{XZ24,
  author  = {Xie, Zongzhen and Zhang, Xiaojin},
  title   = {A bijection theorem for {G}orenstein projective {$\tau$}-tilting modules},
  journal = {Journal of Algebra and Its Applications},
  volume  = {23},
  number  = {10},
  pages   = {Paper No. 2450168},
  year    = {2024},
  doi     = {10.1142/S0219498824501688},
  note    = {\url{https://doi.org/10.1142/S0219498824501688}}
}

@misc{XZ26,
  author       = {Xin, Zheng and Zhang, Lingchun},
  title        = {On geometric models of string algebras: uniqueness of surfaces and existence of red punctures},
  year         = {2026},
  eprint       = {2608.14360v1},
  archiveprefix = {arXiv},
  url          = {https://arxiv.org/abs/2608.14360v1},
  note         = {\url{https://arxiv.org/abs/2608.14360v1}}
}

\end{document}